\documentclass[12pt,a4paper,oneside,reqno]{amsart}
\usepackage{fancyhdr}
\usepackage{tikz}
\usepackage{pgfplots}
\pgfplotsset{compat=1.18}
\usepackage{amssymb,mathrsfs,amsmath,amsthm}
\usepackage[numbers,sort&compress]{natbib}
\usepackage{float}
\usepackage{authblk}
\usepackage{url}
\usepackage[numbers]{natbib}
\usepackage{setspace}
\usepackage{mathtools}
\usepackage{comment}
\usepackage[inline]{enumitem}
\usepackage[
  colorlinks=true,
  linkcolor=blue,
  citecolor=blue,
  urlcolor=blue,
  pagebackref,
  pdfauthor={Your Name},
  pdftitle={Your Document Title},
  pdfkeywords={keyword1, keyword2, keyword3}
]{hyperref}
\usepackage{booktabs}
\usepackage{comment}
\usepackage{hyperref}
\usepackage[margin=1in]{geometry}

\newtheorem{theoremA}{Theorem}

\newtheorem{thm}{Theorem}[section]
\newtheorem{cor}[thm]{Corollary}
\newtheorem{lem}[thm]{Lemma}
\newtheorem{prop}[thm]{Proposition}
\theoremstyle{definition}

\newtheoremstyle{boldremark}
  {10pt}  
  {10pt}   
  {}      
  {}       
  {\bfseries} 
  {.}      
  { }     
  {}  
\theoremstyle{boldremark}
\newtheorem{Remark}[thm]{\textbf{Remark}}
\numberwithin{equation}{section}
\newenvironment{mathclass}
  {Mathematics Subject Classification (2010):}
 
\newcommand{\R}{\mathbb{R}}
\newcommand{\C}{\mathbb{C}}
\newcommand{\N}{\mathbb{N}}

\renewcommand{\keywords}[1]{%
  \par\noindent
  Keywords: #1
  \par
}

\def\R {\mathbb{R}}

\newcommand{\be}{\begin{equation}}
\newcommand{\ee}{\end{equation}}
\newcommand{\bea}{\begin{eqnarray}}
\newcommand{\eea}{\end{eqnarray}}
\newcommand{\Bea}{\begin{eqnarray*}}
\newcommand{\Eea}{\end{eqnarray*}}
\newcommand{\bt}{\begin{Theorem}}
\newcommand{\et}{\end{Theorem}}
\newcommand{\bpr}{\begin{Proposition}}
\newcommand{\epr}{\end{Proposition}}

\newcommand{\bl}{\begin{Lemma}}
\newcommand{\el}{\end{Lemma}}
\newcommand{\bi}{\begin{itemize}}
\newcommand{\ei}{\end{itemize}}

\newtheorem{Definition}{Definition}[section]
\newtheorem{Theorem}[Definition]{Theorem}
\newtheorem{Lemma}[Definition]{Lemma}
\newtheorem{Proposition}[Definition]{Proposition}

\title
{
Inhomogeneous nonlinear Schr\"odinger equation \\ in
Fourier-Lebesgue and modulation spaces }

\author{Divyang G. Bhimani}
\address{Divyang G. Bhimani\\Department of Mathematics\\Indian Institute of Science Education and Research\\ Pune 411008\\India}
\email{divyang.bhimani@iiserpune.ac.in}

\author{Diksha Dhingra}
\address{Diksha Dhingra \\Statmath Unit\\ Indian Statistical Institute\\ Bangalore 560059\\ India}
\email{dikshadd1996@gmail.com}

\author{Vijay Kumar Sohani}
\address{Vijay Kumar Sohani\\ Department of Mathematics\\ Indian Institute of Technology\\ Indore 452020\\ India}
\email{vsohani@iiti.ac.in}
\fancypagestyle{plain}{\fancyhf{} }
\begin{document}
\date{}
\maketitle{}
\begin{center}
DIVYANG G. BHIMANI, DIKSHA DHINGRA,  VIJAY KUMAR SOHANI
\end{center}
\begin{abstract}
The purpose of this work is to provide a broader framework for analyzing the inhomogeneous nonlinear Schr\"odinger equation (INLS) 
\[iu_t + u_{xx} \pm |x|^{-b}|u|^{\alpha-1}u=0, \quad 1<\alpha< 5-2b\; \text{and}\; 0< b\leq 1/4.\] 
Specifically, we establish low-regularity well-posedness
in the Fourier-Lebesgue
$\widehat{L}^{p}$  spaces for $4/3<p<8$. The analysis is carried out differently for the cases $p<2$ and $p>2$. Primarily, in both cases, we prove global well-posedness for arbitrarily large initial data via the  data decomposition method adapted for the Fourier-Lebesgue spaces. 
Furthermore, we obtain analogous results for INLS in modulation spaces $M^{p,p'}$ for $4/3<p<2$. 

\end{abstract}
\footnotetext{\begin{mathclass} Primary 35Q55, 35Q60, 35R11, 42B37; Secondary 35A01.
\end{mathclass}
\keywords{Bourgain's high-low frequency decomposition method, Inhomogeneous Schrödinger equation, Local well-posedness, Global well-posedness, Fourier-Lebesgue spaces, modulation spaces.}}
\tableofcontents
\section{Introduction}
\subsection{Background and motivation}
\par{We investigate the Cauchy problem for the 1D inhomogeneous nonlinear Schrödinger equation (INLS for short), namely,
\begin{equation}\tag{INLS}\label{INLS}
\begin{cases}
    iu_t + u_{xx}+\mu |x|^{-b}(|u|^{\alpha-1}u)=0\\ u|_{t=0}=u_0
\end{cases}(x , t ) \in \mathbb R \times \mathbb R, 
\end{equation}
where $u(x,t) \in \mathbb C,  \mu = \pm 1,\; \alpha>1,  \; b>0 $. This model appears in the setting of nonlinear optics, where the factor $|x|^{-b}$ represents some inhomogeneity in the medium \cite{gill2000optical, liu1994laser}. In recent years, many authors have studied this model, see \cite{guzman2020,genoud2008schrodinger,genoud2010bifurcation,genoud2012inhomogeneous,an2021local,an2021small, JMPAMS, Farahguzman, JMNA, GuzmanMurphy, JMSIAM, LuizMathZ, dinh2021long, Dinhradial, Merle, CarlesINLS}. The  case  $b=0$ corresponds to the standard nonlinear Schr\"{o}dinger equation 
\begin{equation}\tag{NLS}\label{NLS}
iu_t + u_{xx}+\mu |u|^{\alpha-1}u=0,
\end{equation}
it has been extensively
studied over the last three decades \cite{TaoBook, linares2020, cazenave2003semilinear, wang2011harmonic}. 
Recall that \eqref{INLS} is invariant for
scaling \begin{equation}\label{scaling}
u\mapsto \lambda^{\frac{2-b}{\alpha-1}} u (  \lambda x,\lambda^2 t)
\end{equation} and the critical  homogeneous Sobolev spaces $\dot{H}^{s}$ \footnote{Here, $\dot{H}^{s}$ spaces stand for the homogeneous Sobolev spaces having norm $\|f\|_{\dot{H}^{s}}=\||\xi|^{s}\hat{f}\|_{L^{2}}.$} index is given by
\[s=s_{b}=\frac{1}{2}- \frac{2-b}{ \alpha-1}, \ \   i.e. \  \alpha= 1 + \frac{4-2b}{1-2s}. \]
We say \eqref{INLS} is mass critical or $L^{2}-$critical  if $s_{b}=0$, i.e.  $\alpha=5-2b$;  and  mass-subcritical if
$0 >s_{b}$, i.e. $1<\alpha< 5-2b$.

Guzm\'an \cite[Theorem 1.9]{guzman2020} proved global well-posedness (GWP for short) for mass-subcritical \eqref{INLS} in  $L^2.$  In fact, Guzmán also proved some local well-posedness (LWP for short)  and small data GWP in  $H^s$  for a specific range of $s\in (0,1)$ and $b$. Later, An and Kim \cite{an2021small} extended this range. Recently, 
Campos,  Correia, and Farah \cite{Campos2025INLS} have refined these results and established the following theorem.  

\begin{theoremA}[well-posedness in $H^s(\mathbb R)$, \cite{Campos2025INLS}]\label{WPHs} Let $s\geq 0 $ and  $0<b< 1-s$. 
    \begin{enumerate}
        \item (subcritical) If 
        \begin{align*}
            1< \alpha < \begin{cases}
            1+ \frac{4-2b}{1-2s}\quad &\text{if} \quad s<1/2 \\ 
            \infty \quad &\text{if} \quad s \geq 1/2
        \end{cases},
         \end{align*} 
         then \eqref{INLS} is locally well-posed in $H^s.$ 
        \item (critical) If $s> 1/2$ and $\alpha=1+\frac{4-2b}{1-2s},$ then \eqref{INLS} is locally well-posed in $H^s$ and globally well-posed for the small data.
        \item (mildly ill-posed) If $s>0,\quad 0<b<1 \leq b+s<5/2, \; \alpha-1 \in 2\N \quad \text{and,\; if}\quad b+s \geq 1,\quad b+s \not \in \N,$ then  \eqref{INLS} is mildly ill-posed in $H^s.$ 
    \end{enumerate}    
\end{theoremA}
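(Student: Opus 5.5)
Theorem \ref{WPHs} is quoted from \cite{Campos2025INLS}; here is how I would reconstruct it. For parts (1) and (2) I would run a contraction argument for the Duhamel map
\[
\Phi(u)(t)=e^{it\partial_x^2}u_0+i\mu\int_0^t e^{i(t-s)\partial_x^2}\big(|x|^{-b}|u|^{\alpha-1}u\big)(s)\,ds
\]
in a ball of $C_tH^s_x\cap L^q_tW^{s,r}_x$, where $(q,r)$ are one-dimensional admissible pairs, i.e.\ $2/q+1/r=1/2$.

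The key point is how to handle the singular weight $|x|^{-b}$. I would split space into $B=\{|x|\le1\}$ and $B^c$. On $B^c$ the weight is bounded and smooth, so the standard NLS estimates apply. On $B$ I would use $|x|^{-b}\in L^{\gamma}(B)$ for $\gamma<1/b$. Together with Hölder and Sobolev embedding $H^s\hookrightarrow L^{2/(1-2s)}$ (for $s<1/2$), this places the nonlinearity in a dual Strichartz space $L^{\tilde q'}_tL^{\tilde r'}_x$. The numerology then has to be checked.
\begin{itemize}
\item In the subcritical case one gets a positive power of $T$, which closes LWP for arbitrary data.
\item In the critical case ($s>1/2$, $\alpha=1+\frac{4-2b}{1-2s}$) there is no power of $T$. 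One instead uses smallness of $\|e^{it\partial_x^2}u_0\|$ in the Strichartz norm, on a short interval for LWP or globally for small data.
\end{itemize}
For $s\ge 1/2$ I would use $H^s\hookrightarrow L^\infty$ (or $L^p$ for every $p$), which explains why there is no upper bound on $\alpha$.

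The main obstacle is the $s$ derivatives falling on $|x|^{-b}|u|^{\alpha-1}u$. I would apply a fractional Leibniz rule: $D^s(|x|^{-b}F)$ splits into $|x|^{-b}D^sF$ and $D^s(|x|^{-b})F\sim |x|^{-b-s}F$. The second term requires $|x|^{-b-s}$ to be locally integrable in a suitable $L^\gamma$, which forces $b+s<1$; this is exactly the hypothesis $0<b<1-s$. I expect the most delicate parts to be the fractional chain rule for $|u|^{\alpha-1}u$ when $\alpha$ is close to $1$ (non-smooth nonlinearity), and keeping the exponents admissible.

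For part (3) I would follow the standard mild ill-posedness scheme. Take $\alpha-1\in2\N$, so the nonlinearity is a polynomial in $u,\bar u$ times $|x|^{-b}$, and take smooth data $u_0=\varepsilon\phi$. Expanding in Picard iterates, the first nonlinear iterate $\int_0^te^{i(t-s)\partial_x^2}|x|^{-b}|e^{is\partial_x^2}\phi|^{\alpha-1}e^{is\partial_x^2}\phi\,ds$ contains the factor $|x|^{-b}$ times a smooth function that is nonvanishing at $0$. Such a function lies in $H^\sigma$ only for $\sigma<b+1/2$, so it fails to lie in $H^s$ once $s\ge b+1/2$.

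More generally, the flow map loses regularity at the origin, and one must show that the singular part cannot be cancelled for $t>0$ small. This requires a Taylor expansion in $t$ of the $H^s$ norm, with the condition $b+s\notin\N$ ensuring that the singular term $|x|^{-b}$ is not a polynomial correction. I regard this cancellation argument, together with checking the constraint $b+s<5/2$ in the expansion, as the hardest step.
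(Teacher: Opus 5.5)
The paper does not prove this theorem; it is quoted from \cite{Campos2025INLS}, so your sketch can only be judged on its own terms. For (1), your plan is the standard Strichartz scheme and is plausible as an outline. Two caveats: once $D^s$ is involved you must localise $|x|^{-b}$ with a smooth cutoff rather than the indicator of $B$, and the unchecked ``numerology'' is exactly what decides whether the full range $b<1-s$, $\alpha<1+\frac{4-2b}{1-2s}$ is reached.

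You also did not notice that the statement as quoted is inconsistent. In (2), $s>\frac12$ gives $1-2s<0$, hence $\alpha=1+\frac{4-2b}{1-2s}<1$, contradicting $\alpha>1$. Indeed $s_c=\frac12-\frac{2-b}{\alpha-1}<\frac12$ for every $\alpha>1$, so no $H^s$-critical power exists when $s\ge\frac12$. This is presumably a misprint for $s<\frac12$, the only regime in which your ``no power of $T$, use smallness of $e^{it\partial_x^2}u_0$ in a Strichartz norm'' argument has content. Likewise, the standing assumption $b<1-s$ is incompatible with the hypothesis $1\le b+s$ of (3), so (3) must be read independently of it.

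The genuine gap is part (3): the mechanism you propose fails in the stated range. First, your threshold is wrong. Since $\widehat{|x|^{-b}\phi}(\xi)\sim c_b\,\phi(0)|\xi|^{b-1}$ as $|\xi|\to\infty$, we have $|x|^{-b}\phi\in H^\sigma$ if and only if $\sigma<\frac12-b$, not $\sigma<b+\frac12$.

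More importantly, the first iterate is not $|x|^{-b}$ times a smooth function. The time integral $\int_0^t e^{i(t-s)\partial_x^2}(\cdot)\,ds$ gains two derivatives at high frequency. For time-independent forcing its symbol is bounded by $C\min(t,\xi^{-2})$. For the actual forcing $G(s)=|x|^{-b}|e^{is\partial_x^2}\phi|^{\alpha-1}e^{is\partial_x^2}\phi$, integrate by parts in $s$; since $\partial_sG$ is again $|x|^{-b}$ times a smooth function, you get the same bound. Hence the Fourier transform of the first iterate is $O(|\xi|^{b-3})$ at high frequency, and the iterate lies in $H^\sigma$ for every $\sigma<\frac52-b$.

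So for smooth data the first iterate stays in $H^s$ throughout the range $b+s<\frac52$ of (3), and cannot witness any failure there. The upper bound in (3) is exactly this threshold; it is also the condition for $|x|^{-b}|u|^{\alpha-1}u\in H^{s-2}$. Your threshold $s\ge b+\frac12$ also bears no relation to $1\le b+s<\frac52$.

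What is missing is twofold. You need the precise notion of mild ill-posedness used in \cite{Campos2025INLS}. You then need an obstruction tied to $b+s\ge1$, where $|x|^{-b-s}\notin L^1_{loc}$ (exactly where your derivative estimate for (1) breaks down), and to the exclusion $b+s\notin\N$. As written, part (3) is not established.
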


The scaling heuristic suggests that we might expect well-posedness for \eqref{INLS} in  $H^s$ for $s\geq s_b$, and to the best of the author's knowledge, for $b>0,$ there is still a gap left open for $s\in (s_b, 0).$   On the other hand,  for $b=0, \alpha=3,$ we have $s_b=-1/2$.\\
And in view of this scaling  and  to fill the 1/2 gap of the derivative,  Gr\"unrock in his seminal work \cite{Grunrock} established  local well-posedness for cubic \eqref{NLS} in Fourier-Lebesgue spaces: 
$$\widehat{L}^p=\widehat{L}^p(\mathbb R)=\left\{f \in \mathcal{S'}(\mathbb R): \|f\|_{\widehat{L}^p} = \|\widehat{f}\|_{L^{p'}}< \infty\right\},$$ 
 where $1\leq p \leq \infty $ and $\frac{1}{p}+\frac{1}{p'}=1$.  
 By the Hausdorff–Young inequality, the $\widehat{L}^p$ space satisfies
 \begin{align}\label{embdd1}
 \begin{cases}
     L^p &\hookrightarrow \widehat{L}^p \quad (1\leq p \leq 2), \\
 \widehat{L}^p &\hookrightarrow L^p \quad (2\leq p \leq \infty).
\end{cases}
 \end{align}
 While, the Sobolev embedding (in the Fourier side) yields
 \begin{align}\label{embdd2}
 \begin{cases}
     \dot{H}^{\frac{1}{p}-\frac{1}{2}} &\hookrightarrow \widehat{L}^p \quad (1< p \leq 2), \\
   \widehat{L}^p &\hookrightarrow  \dot{H}^{\frac{1}{p}-\frac{1}{2}} \quad (2\leq p < \infty).
    \end{cases}
 \end{align}
 
Also, refer to \cite[Lemma B.1.]{MasakiA&PDEs}. However, we can replace $\widehat{L}^p$ by $\dot{H}^{\frac{1}{2}-\frac{1}{p}}$ in the above stated embeddings \eqref{embdd1}-\eqref{embdd2}. While, for $1 \leq p \leq \infty, (p\not =2),$ there is no inclusion between these two spaces, i.e.
 \begin{align}\label{criticalspaces}
     \dot{H}^{\frac{1}{2}-\frac{1}{p}}  \not \hookrightarrow \widehat{L}^p \quad \text{and} \quad 
     \widehat{L}^p \not \hookrightarrow  \dot{H}^{\frac{1}{2}-\frac{1}{p}}.
 \end{align}
 Also, refer to \cite[Lemma B.2.]{MasakiA&PDEs} for counterexamples.\\
\par{Though, $\widehat{L}^p$ spaces are analogous to $L^p$  spaces but  $\widehat{L}^p$ spaces  offer a more appropriate setting for initial data in studying \eqref{NLS}. The Schr\"odinger propagator $e^{it\partial_x^2}:L^p\to L^p $ iff $p=2$. While the Schr\"odinger propagator is unitary on $\widehat{L}^p$,
\begin{equation}\label{spfls}
    \|e^{i t \partial_x^2}f\|_{\widehat{L}^p}=\|f\|_{\widehat{L}^p} \; \forall t\in \R.
\end{equation}}
\par{ The scaling \eqref{scaling} leaves homogeneous Fourier-Sobolev space $\|f\|_{\widehat{L}^p_s}=\||\cdot|^s\hat{f}\|_{L^{p'}}-$ norm invariant for 
\[s=s_{\text{crit}}(p)=\left(1-\frac{1}{p'}\right)-\frac{2-b}{\alpha-1}.\]
In particular, when $s=b=0, \alpha=3,$ cubic  \eqref{NLS} is scaling critical in $\widehat{L}^1$. It is known that  cubic \eqref{NLS} with Dirac delta function as initial data  is ill-posed. Gr\"unrock \cite{Grunrock} established LWP for cubic \eqref{NLS}  in $\widehat{L}^p \ (1<p< \infty)$ almost reaching critical index $p=1$. See also \cite[Remarks 1.2 and 1.3]{guo20171d}. We also point out that a precursor to this result by Vargas-Vega \cite{vargas2001global}, establishing global well-posedness  for data with an infinite $L^{2}-$norm.  Later, Hyakuna and Tsutsumi \cite{hyakuna2012existence}  strengthened Gr\"unrock's result in $\widehat{L}^{p}$ to all mass-subcritical cases $\alpha \in (1,5)$. In the following theorem, we summarize the known well-posedness results in $\widehat{L}^p$.}
\begin{theoremA}[well-posedness in $\widehat{L}^p$]\label{WPFLS}\
\begin{enumerate}
\item Gr\"unrock \cite[Section 5]{Grunrock} proved \eqref{NLS} is:
\begin{itemize}
    \item[-] LWP for $\alpha=3$ and $1<p<\infty$. 
    \item[-] GWP  for $\alpha=3$ and $5/3<p<2$. 
\end{itemize}    
    \item Hyakuna-Tsutsumi \cite[Theorems 1 and 2]{hyakuna2012existence} proved \eqref{NLS} is:
    \begin{itemize}
     \item[-] LWP for $\alpha \in (1,5)$, $4/3<p<2$ or $2<p<\infty.$ 
    \item[-] GWP for $\alpha \in (1,5)$, $4/3<p<2$ or $2<p<\infty,$ with $p$ sufficiently close to 2.    
\end{itemize}
\item Masaki-Segata \cite[Theorem 1.11]{Masaki1} established LWP for $4/3<p<4$ and $p=\alpha+1$.
\item Bhimani-Carles \cite[Theorems 1.2 and 1.3]{BhimaniNorm} proved \eqref{NLS} is
\begin{itemize}
    \item[-] strong ill-posedness below the expected critical regularities ($s<s_{c}$).
\item[-] norm inflation with infinite loss of regularity. 
\end{itemize}
\end{enumerate}    
\end{theoremA}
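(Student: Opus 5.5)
Since Theorem \ref{WPFLS} collects results from \cite{Grunrock, hyakuna2012existence, Masaki1, BhimaniNorm}, the proof consists of invoking those works. Here I sketch the common strategy, because the rest of the paper adapts it to \eqref{INLS}.

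\textbf{Local theory.} The starting point is a Strichartz estimate with $\widehat{L}^p$ data. For $4/3<p<\infty$ one has
\[
\|e^{it\partial_x^2}f\|_{L^{q}_tL^{r}_x}\lesssim \|f\|_{\widehat{L}^p},\qquad \frac{2}{q}+\frac{1}{r}=\frac{1}{p}.
\]
For $p<2$ this follows from the one-dimensional Fourier restriction theorem for the parabola (Zygmund, Fefferman--Stein). One writes $e^{it\partial_x^2}f$ as the extension operator applied to $\widehat f\in L^{p'}$. The threshold $p>4/3$, i.e.\ $p'<4$, is exactly the range of the restriction/extension estimate for curves. For $p>2$ one interpolates with the trivial bound $\widehat{L}^p\to L^\infty_tL^{p}_x$ and the $L^2$ estimates. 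Next comes the inhomogeneous (Christ--Kiselev / duality) version of the estimate, together with the unitarity \eqref{spfls}. With these one runs a contraction argument in the space
\[
X_T=C([0,T];\widehat{L}^p)\cap L^{q}_TL^{r}_x .
\]
The nonlinearity $|u|^{\alpha-1}u$ is estimated by Hölder in $L^{\tilde q'}_TL^{\tilde r'}_x$. Mass-subcriticality $\alpha<5$ leaves a positive power $T^{\theta}$, which gives LWP for all $\alpha\in(1,5)$; this is Hyakuna--Tsutsumi. Grünrock instead works in Fourier restriction spaces $X^{s,b}_{p}$ built on $\widehat{L}^p$, with bilinear estimates for the cubic term. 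That route reaches $1<p<\infty$ when $\alpha=3$. Masaki--Segata exploit the special pairing $p=\alpha+1$ for the range $4/3<p<4$.

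\textbf{Global theory.} Here I would use Bourgain's high--low data decomposition, the step I expect to be the main obstacle. Take $p<2$ first. Split $u_0=v_0+w_0$ with $v_0\in L^2$ and $w_0$ small in $\widehat{L}^p$. Concretely, $\widehat{v_0}=\widehat{u_0}\mathbf 1_{|\widehat{u_0}|>N}$ for $p<2$, or a frequency cutoff for $p>2$. Solve \eqref{NLS} for $v$ with data $v_0$ using $L^2$ mass conservation. Then treat the difference $w=u-v$ as a perturbation in $\widehat{L}^p$. The key point is to show that the Duhamel part of the interaction has better $L^2$-control, so it can be transferred to $v$ at each time step while the $\widehat{L}^p$ part stays small. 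This requires a careful balance of $N$, the step length, and the growth of $\|v\|_{L^2}$. The exponents close only in a restricted range: $5/3<p<2$ for the cubic case, and $p$ near $2$ in general.

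\textbf{Ill-posedness.} For part (4), following \cite{BhimaniNorm}, expand the solution in a power series of Picard iterates for data of the form $u_0=\rho\,\chi$. One takes $\chi$ concentrated at high frequency, so that scaling makes $\|u_0\|_{\widehat{L}^p_s}$ small for $s<s_c$. Then one shows that the first nonlinear iterate dominates and becomes large in short time, which gives norm inflation.
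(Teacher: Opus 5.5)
Because this statement only collects results from the literature, the paper gives no proof of it, and invoking the cited works is all that is required; that part of your proposal matches the paper. The problem is the sketch you present as the common strategy the paper adapts, since several of its steps would not work as written.

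First, for $p<2$ the positive power of $T$ does not come from mass-subcriticality. With $b=0$, $s_{\mathrm{crit}}(p)=\frac1p-\frac{2}{\alpha-1}$ vanishes at $\alpha=2p+1<5$. A scaling argument shows that any bound $\|\int_0^t e^{i(t-s)\partial_x^2}(|u|^{\alpha-1}u)\,ds\|_{X(T)}\lesssim T^{\theta}\|u\|_{X(T)}^{\alpha}$, with $X(T)$ scaling like $\widehat{L}^p$, forces $\theta\le 1-\frac{\alpha-1}{2p}$. The paper's own count is stricter still and needs $p_0>\frac{2\alpha}{5}$ (Remark \ref{Remarkwhyp0so}). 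So at a fixed $p\in(4/3,2)$ one does not get every $\alpha\in(1,5)$. Item (2) of Theorem \ref{WPFLS} has to be read with $p$ close to $2$ depending on $\alpha$, as in \eqref{pminloc}.

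The mechanism you name for the Duhamel term is also off. Duality applied to $\widehat{L}^p\to L^{q}_tL^{r}_x$ lands in $\widehat{L}^{p'}$, not in $\widehat{L}^p$. For $p<2$, what would put the Duhamel term in $L^\infty_T\widehat{L}^p$ is \eqref{spfls} together with $\|F\|_{\widehat{L}^p}\le\|F\|_{L^p}$. The paper never puts the Duhamel term in $\widehat{L}^p$ at all. It contracts in $X(T)=L^{Q_{p_0}(r)}_TL^r$ alone and sends the Duhamel term through the $L^2$ Strichartz estimate and the embedding $Y(T)\hookrightarrow X(T)$ (Remark \ref{chipAp}). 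This is also what supplies the $L^2$ control the global iteration needs (Corollary \ref{winfty2}).

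Second, the data are split by the size of $\widehat{u_0}$, not by frequency, and the split has to be quantitative in an auxiliary space. For $p<2$, take $\widehat{\phi_0}=\widehat{u_0}\mathbf{1}_{\{|\widehat{u_0}|>\epsilon\}}$ with $\epsilon\to0$. Then $\|\phi_0\|_{L^2}\le\epsilon^{1-p'/2}\|u_0\|_{\widehat{L}^p}^{p'/2}$ and, for $p_0<p$, $\|u_0-\phi_0\|_{\widehat{L}^{p_0}}\le\epsilon^{1-p'/p_0'}\|u_0\|_{\widehat{L}^p}^{p'/p_0'}$, which is \eqref{ipt2}. With your threshold $N$ taken large, the $L^2$ piece becomes small and almost all of $u_0$ stays in the rough part. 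Moreover, smallness of the remainder in $\widehat{L}^p$ itself comes with no rate. The rates $N^{-1}$ versus $N^{\beta_1}$ are exactly what \eqref{Npower} balances, and this is where $p_{\min}$ comes from.

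For $p>2$ a frequency cutoff fails outright. A function $\widehat{u_0}\in L^{p'}$ with $p'<2$ need not be square integrable near any given frequency; take $|\xi-\xi_0|^{-a}$ near $\xi_0$ with $\frac12\le a<\frac1{p'}$. Nor need it lie in $L^{r'}$, $r'<p'$, near infinity. One again splits by size: $\widehat{u_0}\mathbf{1}_{\{|\widehat{u_0}|\le\lambda\}}\in L^2$, and $\widehat{u_0}\mathbf{1}_{\{|\widehat{u_0}|>\lambda\}}$ is small in $L^{r'}$ as $\lambda\to\infty$. This is \eqref{ipt3}, and one then works with data in $L^2+\widehat{L}^r$ as in Theorem \ref{lwp3}, rather than in $C_T\widehat{L}^p$.
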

Taking Theorem \ref{WPHs} into account, all the existence results for the \eqref{INLS} are restricted to Sobolev spaces with nonnegative index, i.e., in $H^{s}(\R),\; s \geq0.$ In view of this, together with the discussion on Theorem  \ref{WPFLS},   we are  inspired to study \eqref{INLS} into $\widehat{L}^p$ spaces for $1<p< \infty$, where the analysis differs according to whether $p<2$ or $p>2$. See Theorems \ref{lwp2}-\ref{gwp3} below. 

We shall now turn our attention to the modulation spaces \cite{Feih83, wang2011harmonic, KassoBook}. Let $\rho: \mathbb R \to [0,1]$  be  a smooth function satisfying   $\rho(\xi)= 1 \  \text{if} \ \ |\xi| \leq 1/2$ and $\rho(\xi)=
0 \  \text{if} \ \ |\xi| \geq  1$. Let  $\rho_k$ be a translation of $\rho,$ that is,
$ \rho_k(\xi)= \rho(\xi -k) \ (k \in \mathbb Z).$
Denote 
$$\sigma_{k}(\xi)= \frac{\rho_{k}(\xi)}{\sum_{l\in\mathbb Z}\rho_{l}(\xi)}\quad (k \in \mathbb Z).$$
The frequency-uniform decomposition operators can be  defined by 
$$\square_k = \mathcal{F}^{-1} \sigma_k \mathcal{F} \quad (k \in \mathbb Z)$$ 
where $\mathcal{F}$ and $\mathcal{F}^{-1}$ denote the Fourier and inverse Fourier transform respectively. The weighted modulation spaces  $M^{p,q}_s \ (1 \leq p,q \leq \infty, s \in \R)$ is defined as follows:
\begin{equation*}
    M^{p,q}_s= M^{p,q}_s(\R)= \left\{ f \in \mathcal{S}'(\R): \left|\left|f\right|\right|_{M^{p,q}_s}=  \left\| \|\square_kf\|_{L^p_x} (1+|k|)^{s} \right\|_{\ell^q_k}< \infty  \right\} . 
\end{equation*}
For $s=0,$ we write $M^{p,q}_0= M^{p,q}.$ For $p=q=2,$ modulation spaces coincide with Sobolev spaces, i.e. $M^{2,2}_s= H^s \ (s \in \R). $ In the last  two decades, many authors have studied \eqref{NLS}  with Cauchy data in $M^{p,q}$-spaces, we briefly summarize some of them in the below theorem and  refer to \cite{KassoBook, wang2011harmonic} for a thorough introduction.
\begin{theoremA}[well-posedness in $M^{p,q}$ ]\label{WPMS}
 Let $1\leq p, q \leq \infty$ and $s\geq 0.$ We have following results in \eqref{NLS}:
\begin{enumerate} 
\item  \cite{Wang2006, wango, Kasso2009, Bhimani2016, CorderoJMAA} LWP in $M_s^{p,1} $ for $  \alpha-1 \in 2\mathbb N$ and small-data GWP  $M^{2,1}$  for  $\alpha=3$. 
\item GWP in 
\begin{itemize}
    \item[-]  \cite{oh2018global, guo20171d} $M^{2,q}$ for $2\leq q< \infty$ and $\alpha=3$. 
    \item[-] \cite{Klaus} $M^{p,q}$ for $1\leq q\leq  p \leq 2$ or $M^{p,q}_s$ for $s>1/q', 1\leq p \leq 2$  or $M_1^{p,1}$ for $2\leq p< \infty.$
    \item[-] \cite{LeonidIn,leonidthesis, Bhimani20251d}   $M^{p, p'}$ for $p$ sufficiently close to 2, and $\alpha \in (1,5)$.
\end{itemize}
\end{enumerate}
\end{theoremA}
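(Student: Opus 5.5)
Theorem \ref{WPMS} collects results from the cited literature, so formally its proof is by citation: each item is stated, with proof, in the indicated references. I only indicate the mechanism behind each item, to fix the tools that will be adapted to \eqref{INLS} later in the paper.

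For item (1), the plan is a contraction-mapping argument for the Duhamel formulation
\[
u(t)=e^{it\partial_x^2}u_0+i\mu\int_0^t e^{i(t-\tau)\partial_x^2}\bigl(|u|^{\alpha-1}u\bigr)(\tau)\,d\tau
\]
in $C([0,T];M^{p,1}_s)$. Two facts are used.
\begin{itemize}
\item The propagator is a unimodular Fourier multiplier. It is bounded on $M^{p,q}_s$ with norm $\lesssim (1+|t|)^{|\frac12-\frac1p|}$, obtained by estimating each piece $\square_k e^{it\partial_x^2}$ uniformly in $k$.
\item $M^{p,1}_s$ is a Banach algebra under pointwise multiplication. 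This is why the nonlinearity must be an algebraic polynomial, i.e. $\alpha-1\in 2\mathbb N$.
\end{itemize}
A fixed point then exists on a time interval depending only on $\|u_0\|_{M^{p,1}_s}$. For small-data GWP in $M^{2,1}$ with $\alpha=3$, I would replace the trivial time bound with frequency-uniformly localized Strichartz estimates. That is, Strichartz estimates are applied to each $\square_k u$ and summed in $\ell^1_k$, which closes the iteration globally when the data are small.

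For item (2), the mechanisms differ from case to case.
\begin{itemize}
\item $M^{2,q}$ with $\alpha=3$ \cite{oh2018global, guo20171d}: one uses the complete integrability of cubic NLS to obtain a priori bounds, precluding blow-up of the local $M^{2,q}$ solution.
\item The $M^{p,q}$ results of \cite{Klaus}: these rely on $M^{p,q}$ a priori estimates and a continuation argument.
\item The statement for $M^{p,p'}$ with $p$ close to $2$: this follows Bourgain's high--low decomposition, adapted to \eqref{NLS}.
\end{itemize}
The decomposition runs as follows. One splits $u_0=v_0+w_0$, where $v_0\in L^2$ is large and $w_0$ is small in $M^{p,p'}$ (or $\widehat L^p$). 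The $L^2$-part $v$ is solved globally, using Strichartz estimates and mass conservation in the mass-subcritical range $\alpha<5$. The difference $w=u-v$ is then treated perturbatively, in a space built from the local estimates: for $p<2$ this is the Hyakuna--Tsutsumi $L^{p'}$-Strichartz-type space, and for $p>2$ its dual analogue. One shows that the nonlinear interaction part of $w$ lies in $L^2$, with a bound that gains a positive power of the smallness parameter. One then iterates on time intervals of length determined by $\|v\|_{L^2}$, absorbing the nonlinear interaction part into the $L^2$ component at each step.

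The main obstacle is the iteration count. The $L^2$ norm of the transferred piece grows at each step, so one must check that the accumulated growth over roughly $T/\delta$ steps stays below the $L^2$ threshold fixed at the start. This forces the gain exponent to beat the growth, which is possible only when $p$ is sufficiently close to $2$, and this is precisely the restriction in the stated results.
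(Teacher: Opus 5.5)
Your proposal agrees with the paper: this statement is a summary of known results, the paper gives no argument beyond the citations, and deferring to the references is exactly what it does. One caution on your heuristic for item (1): closing a global-in-time, $\ell^1_k$-summed frequency-uniform Strichartz iteration would force scattering in $M^{2,1}\hookrightarrow L^2$, which fails for the long-range 1D cubic equation, so that mechanism (which in one dimension only works for $\alpha\ge 5$) cannot be how the small-data $M^{2,1}$ statement for $\alpha=3$ is obtained.
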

It is known that  modulation spaces  do not possess strict  scaling-invariance property; however it scale like   Fourier-Lebesgue spaces,  see \cite[Section 1.3]{BhimaniNorm}. Further, taking the following 
 sharp embedding
 \begin{align}\label{embdModFLS}
    M^{p,p'} &\hookrightarrow \widehat{L}^{p} \quad \text{for} \ 
     \;  1 \leq p \leq 2
\end{align}
and uniform boundedness of Schr\"odinger propagator modulation spaces
\begin{equation}\label{Mpp1}
    \|e^{it\Delta}f\|_{M^{p,q}} \lesssim C(t) \|f\|_{M^{p,q}} \quad \text{for} \quad  1 \leq p,q \leq \infty
\end{equation}
into account,  we expect  well-posedness of \eqref{INLS} in the space $M^{p,p'}$ for $p<2$. See Corollaries \ref{lwp} and \ref{mr} below. We also refer to \cite{Bhimani20251d} for the relevance of studying \eqref{INLS} in $M^{p,p'}$ for $4/3<p<2$.

\subsection{Statement of the main results}
\par{ Let us  fix any potential index  $b\in (0,1/4]$. Then for a given nonlinearity index  $\alpha \in (1,5-2b)$, we define solution space index $r(\alpha)$\footnote{Note that $r$ is a well-defined and continuous function of $\alpha$ and $r>2$ in all cases.}  as follows
\begin{equation}\label{rvalues}r=r(\alpha):= 
\begin{cases}
 \alpha+1 &\text{if} \quad \alpha \in  (1,3) \\ 
 2(\alpha-1)\quad  & \text{if} \quad \alpha\in [3,5-2b).\\ 
\end{cases}
\end{equation}}

\par{
We begin with stating the results for the well-posedness theory $\widehat{L}^{p}$ for $p<2$ and then treat the case $p>2$ in the subsequent subsection.}

\subsubsection{\underline{Well-posedness in $\widehat{L}^{p}$ and $M^{p,p'}$ for $p<2$}}  
\par{To state our results, we need the following definitions and notations.}
\begin{Definition}[Generalized Strichartz pair, \cite{hyakuna2012existence, CFeffermanAM,grunrock2004improved}] \label{gsp} Let $1\leq p_{0} \leq 2.$
 Consider a pair of exponents $(Q_{p_{0}}(r),r)$ satisfying 
\begin{align*}
&\frac{2}{Q_{p_{0}}(r)} + \frac{1}{r} = \frac{1}{p_{0}} \\
  \text{either}\quad 
& 0 < \frac{1}{Q_{p_{0}}(r)} < \min \left( \frac{1}{2} - \frac{1}{r},\; \frac{1}{4} \right) \quad  \text{and}\quad
  0 < \frac{1}{r} < \frac{1}{2}\\
&  \text{or}\quad 
(Q_{p_0}(r), r)= (4, r) \quad  \text{and}\quad r > 4.
\end{align*}
The set of all such pairs is denoted by 
$\widehat{\mathcal{X}}(p_{0}).$ 
\end{Definition}
The possible values of $1/p_{0}$ and $1/r$ qualifying for $(Q_{p_{0}}(r), r))\in \widehat{\mathcal{X}}(p_{0})$ are illustrated by Figure \ref{fig2:myplot}:
\begin{figure}[htbp]
\centering
\begin{tikzpicture}[scale=6.7]
\draw[->] (0,0) -- (1,0) node[right] {$\frac{1}{p_{0}}$};
\draw[->] (0,0) -- (0,0.7) node[above] {$\frac{1}{r}$};

\foreach \x/\label in {0/0, 0.5/{\frac{1}{2}}, 0.75/{\frac{3}{4}}}
    \draw (\x,0) -- (\x,-0.02) node[below] {$\label$};
\foreach \y/\label in {0.25/{\frac{1}{4}}, 0.5/{\frac{1}{2}}}
    \draw (0,\y) -- (-0.02,\y) node[left] {$\label$};
\fill[cyan!40]
(0.5,0.25) --
(0.75,0.25) --
(0.5,0.5) -- cycle;

\node at (0.583,0.333) {$R_1$};
\fill[green!40]
(0.5,0) --
(0.5,0.25) --
(0.75,0.25) -- cycle;
\node at (0.583,0.167) {$R_2$};
\draw[dotted, thick] (0.5,0.5) -- (0.75,0.25);
\draw (0.5,0.25) -- (0.75,0.25);
\draw (0.5,0) -- (0.5,0.5);
\draw[red, thick] (0.5,0) -- (0.75,0.25);
\draw[fill=white] (0.5,0.5) circle (0.012);
\draw[fill=white] (0.75,0.25) circle (0.012);
\fill (0.5,0) circle (0.012);
\fill (0.5,0.25) circle (0.012);
\end{tikzpicture}
 \caption{The cyan region $R_{1}$ is defined as the area  bounded by $1/4\leq 1/r< 1/2,\; 1/2 \leq 1/p_{0} < 3/4 \;  \text{and} \; 1/{p_{0}}+1/r<1.$ The green region $R_{2}$ represents the area bounded by $0 < 1/r \leq 1/4,\; 1/2 \leq 1/{p_{0}}< 3/4 \; \text{and}\; 1/{p_{0}}-1/r< 1/2$, while the red line denotes $0 \leq 1/r < 1/4,\;1/{p_{0}}-1/r= 1/2$. }
 \label{fig2:myplot}
\end{figure}
\begin{Remark}
    With $r$ given in \eqref{rvalues}, $(q(r),r) \in \mathcal{A}$ and $(Q_{p_{0}}(r),r) \in \mathcal{X}(p_{0})$ for $p_{0}$ satisfying conditions as listed in Remark \eqref{whyp0so}.
\end{Remark}
 In all subsequent theorems, we assume that 
$p_{0}$ satisfies  

 \begin{gather}\label{pminloc}
2>p_{0}>\begin{cases}
\vspace{0.2cm}
\max \left( \frac{\alpha+1}{\alpha}, \frac{4\alpha}{9-4b} \right) &\text{if} \quad \alpha \in  (1,3) \\ 
\vspace{0.2cm}
\max \left( \frac{2(\alpha-1)}{\alpha}, \frac{2\alpha}{5-2b} \right) \quad  & \text{if} \quad \alpha\in [3,5-2b).
\end{cases}
    \end{gather}

  \begin{thm}[local well-posedness]\label{lwp2} Let $\alpha \in (1,5-2b)$,  $b\in (0, 1/4]$ and $r$ be given as in \eqref{rvalues}. Assume that  $u_{0}\in \widehat{L}^{p_{0}}$ with $p_{0}$ satisfying \eqref{pminloc}.
    Then, for $ (Q_{p_{0}}(r),r)\in \widehat{\mathcal{X}}(p_{0})$ and for some $T^{*}=T^*(\|u_{0}\|_{\widehat{L}^{p_{0}}},\alpha,b)>0$,
$\eqref{INLS}$  has a unique local solution
 $$ u \in  L^{Q_{p_{0}}(r)}([0,T^{*}),L^{r}(\R)).$$
     \end{thm}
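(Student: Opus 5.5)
We run a contraction argument for the Duhamel formulation
\[
\Phi(u)(t)=e^{it\partial_x^2}u_0+i\mu\int_0^t e^{i(t-s)\partial_x^2}\big(|x|^{-b}|u|^{\alpha-1}u\big)(s)\,ds
\]
in a closed ball of $X_T:=L^{Q_{p_0}(r)}([0,T),L^r(\R))$. Two ingredients are needed.

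\textbf{Homogeneous estimate.} We use the generalized Strichartz estimate for $\widehat L^{p_0}$ data (Fefferman--Stein / Hyakuna--Tsutsumi, the reason for Definition \ref{gsp}):
\[
\|e^{it\partial_x^2}u_0\|_{L^{Q_{p_0}(r)}_tL^r_x}\lesssim \|u_0\|_{\widehat L^{p_0}},\qquad (Q_{p_0}(r),r)\in\widehat{\mathcal X}(p_0).
\]
This is the restriction/Fourier extension estimate in one dimension, rescaled.

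\textbf{Inhomogeneous estimate.} The Duhamel term has to be controlled in the same space $X_T$. The norm $L^{Q_{p_0}(r)}_tL^r_x$ is not an admissible $L^2$ Strichartz norm. The plan is to pass through an ordinary admissible pair $(q(r),r)\in\mathcal A$ with $2/q(r)=1/2-1/r$, together with its dual estimate, via a Christ--Kiselev/Keel--Tao type inhomogeneous inequality
\[
\Big\|\int_0^te^{i(t-s)\partial_x^2}F\,ds\Big\|_{L^{Q}_tL^r_x}\lesssim \|F\|_{L^{\tilde Q'}_tL^{\tilde r'}_x}.
\]
Here $(\tilde Q,\tilde r)$ is chosen so that the scaling condition $2/Q+1/r+2/\tilde Q+1/\tilde r=1$ holds (the Kato/Foschi nonadmissible range). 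This works in one dimension when the acceptability conditions hold, and it is where the restrictions on $(Q,r)$ enter.

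\textbf{Nonlinear estimate.} The singular weight is handled by splitting into $B=\{|x|\le1\}$ and $B^c$. On $B^c$, $|x|^{-b}\le1$ and Hölder gives
\[
\||u|^{\alpha-1}u\|_{L^{\tilde r'}}\le\|u\|_{L^r}^{\alpha}, \quad\text{provided } \alpha/r\le 1/\tilde r'.
\]
On $B$, we apply Hölder with $|x|^{-b}\in L^{\gamma}(B)$ for any $\gamma<1/b$; since $b\le 1/4$, we may take $\gamma$ up to nearly $4$. This gives
\[
\||x|^{-b}|u|^{\alpha-1}u\|_{L^{\tilde r'}(B)}\lesssim \|u\|_{L^r}^\alpha, \qquad \frac1{\tilde r'}=\frac1\gamma+\frac{\alpha}{r}.
\]
In time we use Hölder to extract a factor $T^{\theta}$ with
\[
\theta=\frac1{\tilde Q'}-\frac{\alpha}{Q}>0 .
\]
This yields
\[
\|\Phi(u)\|_{X_T}\le C\|u_0\|_{\widehat L^{p_0}}+CT^\theta\|u\|_{X_T}^\alpha,
\]
and, using $\big||u|^{\alpha-1}u-|v|^{\alpha-1}v\big|\lesssim(|u|^{\alpha-1}+|v|^{\alpha-1})|u-v|$, the analogous difference bound.

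\textbf{Main obstacle: exponent bookkeeping.} Everything hinges on finding $\gamma,\tilde Q,\tilde r$ so that, simultaneously:
\begin{itemize}
\item[(i)] the inhomogeneous estimate is valid;
\item[(ii)] $\gamma<1/b$ on $B$, and $\tilde r'\ge1$;
\item[(iii)] $\theta>0$.
\end{itemize}
The choice $r=\alpha+1$ for $\alpha<3$ and $r=2(\alpha-1)$ for $\alpha\ge3$ is presumably tuned so that this works. Condition \eqref{pminloc} should be exactly what (iii) and (i) produce. Indeed, $p_0>(\alpha+1)/\alpha$ or $p_0>2(\alpha-1)/\alpha$ is equivalent to $\alpha/r<1/p_0'$-type conditions, and the terms $4\alpha/(9-4b)$ and $2\alpha/(5-2b)$ come from requiring $\theta>0$ when $b$ enters through $\gamma$. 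I would first try the time exponent $\tilde Q'$ dictated by scaling, check $\theta=\frac{(5-2b)-\alpha}{\cdot}$-type positivity, and adjust.

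\textbf{Conclusion.} With these exponents fixed, choose the radius $R=2C\|u_0\|_{\widehat L^{p_0}}$ and $T^*$ with $CT^{*\theta}R^{\alpha-1}\ll1$. Then $\Phi$ is a contraction on the ball, which gives existence and uniqueness in $X_{T^*}$.
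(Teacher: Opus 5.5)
\textbf{Verdict.} Your architecture matches the paper's: a contraction in $L^{Q_{p_0}(r)}_TL^r$, the Fefferman--Stein bound \eqref{gse1} for the free part, a splitting $\R=B\cup B^c$, and H\"older in $x$ and $t$. However, the nonlinear estimate as you wrote it does not close, because you use a single dual exponent $\tilde r'$ on both regions.

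\textbf{The gap.} The two regions impose incompatible conditions on $\tilde r'$.
\begin{enumerate*}[label=(\roman*)]
\item On $B$ you need $\frac1{\tilde r'}=\frac1\gamma+\frac{\alpha}{r}$ with $\frac1\gamma>b$.
\item On $B^c$ you claim $\||u|^{\alpha-1}u\|_{L^{\tilde r'}(B^c)}\le\|u\|_{L^r}^{\alpha}$ whenever $\frac{\alpha}{r}\le\frac1{\tilde r'}$. This is false unless equality holds. $B^c$ has infinite measure, so H\"older cannot lower the exponent; test $u=\mathbf{1}_{[1,1+L]}$ with $L\to\infty$.
\end{enumerate*}
Replacing $|x|^{-b}$ by $1$ also throws away the only decay available on $B^c$. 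Even if you keep $|x|^{-b}\in L^{\gamma}(B^c)$ with $\gamma>1/b$, you only get $0\le\frac1{\tilde r'}-\frac{\alpha}{r}<b$, which contradicts (i).

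\textbf{The fix.} The missing idea, which the paper stresses, is to use linearity of the Duhamel term. Split $F=F\mathbf{1}_B+F\mathbf{1}_{B^c}$ and estimate the two pieces with \emph{different} dual pairs: $\frac1{\rho_1'}=\frac{\alpha}{r}$ on $B^c$, and $\frac1{\rho_2'}=\frac1\gamma+\frac{\alpha}{r}$ with $\frac1\gamma>b$ on $B$. These are the pairs $(\gamma_1,\rho_1)$ and $(\gamma_2,\rho_2)$ of Lemma \ref{lemlwp1}.

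\textbf{The bookkeeping is not automatic.} The step you leave at ``presumably'' is where every hypothesis enters.
\begin{enumerate*}[label=(\alph*)]
\item On $B$ you need $b+\frac{\alpha}{r}<\frac1{\rho_2'}\le 1$. As $\alpha\to3$ (with either formula for $r$), $\frac{\alpha}{r}\to\frac34$, so the margin is $\frac14-b$. At $(\alpha,b)=(3,\frac14)$, which lies in the stated range, there is no room at all.
\item Do not copy the paper's Case~2 value $\rho_2'=\frac{4(2-b)}{5+6b-4b^2}$. It gives $\frac1{\rho_2'}-\frac{\alpha}{r}<b$ for every $\alpha\in[3,5-2b)$, because the inequality for $\rho_2'$ was divided by the negative quantity $\rho_2'(1+2b)-2$ without reversing its direction.
\item Instead choose $\frac1\gamma$ slightly above $b$, depending on $\alpha$. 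The lemma's power of $T$ on $B$ is then $\frac54-\frac{\alpha}{2p_0}-\frac1{2\gamma}$, which is positive exactly when $p_0>\frac{2\alpha}{5-2b}$.
\end{enumerate*}

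\textbf{The Duhamel estimate.} Your non-admissible (Foschi-type) inhomogeneous estimate into $L^{Q_{p_0}(r)}_tL^r_x$ is legitimate in one dimension once each region has its own dual pair. It is, however, unnecessary. The paper uses $Q_{p_0}(r)<q(r)$, so $L^{q(r)}_TL^r\hookrightarrow L^{Q_{p_0}(r)}_TL^r$ for $T\le1$ (Remark \ref{chipAp}), and then needs only the standard estimate \eqref{st} with admissible dual pairs. By scaling, your route gives the same power of $T$ as the paper's once the factor $T^{\frac1{Q_{p_0}(r)}-\frac1{q(r)}}$ from the embedding is kept. So it gains nothing and adds acceptability conditions you would have to check.
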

     
\par{Also, we obtain analogous results for \eqref{INLS} in $M^{p,p'}$ for $4/3<p<2$ in view of \eqref{embdModFLS}. }
     \begin{cor}[local well-posedness]\label{lwp}Let $\alpha \in (1,5-2b)$,  $b\in (0, 1/4] $ ,  $r$ and $p_{0}$ be as given in \eqref{rvalues} and \eqref{pminloc}, respectively. Assume that  $u_{0}\in M^{p_{0},p_{0}'}$. Then, for $ (Q_{p_{0}}(r),r)\in \widehat{\mathcal{X}}(p_{0})$ and for some  $T_{*}>0,$ 
$\eqref{INLS}$  has a unique local solution $$ u \in  L^{Q_{p_{0}}(r)}([0,T_{*}),L^{r}(\R)).$$
\end{cor}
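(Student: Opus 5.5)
The corollary should follow directly from Theorem \ref{lwp2} once we use the sharp embedding \eqref{embdModFLS}. Since $4/3<p_{0}<2$, we have $M^{p_{0},p_{0}'}\hookrightarrow \widehat{L}^{p_{0}}$, with
\[
\|u_{0}\|_{\widehat{L}^{p_{0}}}\lesssim \|u_{0}\|_{M^{p_{0},p_{0}'}}.
\]
So any $u_{0}\in M^{p_{0},p_{0}'}$ is an admissible datum for Theorem \ref{lwp2}. Indeed, $p_0$ satisfies \eqref{pminloc}, $r$ is given by \eqref{rvalues}, and $(Q_{p_{0}}(r),r)\in\widehat{\mathcal{X}}(p_{0})$, so the hypotheses transfer verbatim.

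Applying Theorem \ref{lwp2}, we obtain $T^{*}=T^{*}(\|u_{0}\|_{\widehat{L}^{p_{0}}},\alpha,b)>0$ and a unique solution $u\in L^{Q_{p_{0}}(r)}([0,T^{*}),L^{r})$. We then set $T_{*}:=T^{*}$.

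The existence time from the fixed-point argument behind Theorem \ref{lwp2} is a decreasing function of the size of the data. Because of the embedding, $T_{*}$ can therefore be chosen depending only on $\|u_{0}\|_{M^{p_{0},p_{0}'}}$, $\alpha$ and $b$. Uniqueness is inherited directly, since the solution class $L^{Q_{p_{0}}(r)}([0,T_{*}),L^{r})$ is the same.

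For completeness, the mechanism inside Theorem \ref{lwp2} that we rely on is the following. The free evolution is controlled by the generalized Strichartz estimate
\[
\|e^{it\partial_x^2}u_0\|_{L^{Q_{p_0}(r)}_tL^r_x}\lesssim\|u_0\|_{\widehat{L}^{p_0}},
\]
valid for pairs in $\widehat{\mathcal{X}}(p_0)$. The Duhamel term is then bounded in $L^{Q}_tL^r_x$ using the inhomogeneous Strichartz estimates, after splitting $|x|^{-b}=|x|^{-b}\chi_{|x|\le1}+|x|^{-b}\chi_{|x|>1}$ and applying H\"older with a time gain $T^{\theta}$, $\theta>0$. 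The constraints \eqref{pminloc} together with $b\le 1/4$ are exactly what makes this H\"older step close.

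The only point needing care is this dependence of the existence time on the data, since only $\widehat{L}^{p_0}$ information about $u_0$ is used. Monotonicity of $T^*$ in the data size, which the contraction argument gives, settles it. Beyond this, no new estimate is required.
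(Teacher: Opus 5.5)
Your proposal is correct and follows essentially the paper's route. The paper likewise uses only the embedding \eqref{embdModFLS} together with \eqref{gse1} to get $\|e^{it\partial_x^2}u_0\|_{X(T)}\lesssim\|u_0\|_{\widehat{L}^{p_0}}\lesssim\|u_0\|_{M^{p_0,p_0'}}$, and then repeats the contraction argument of Theorem \ref{lwp2} unchanged, which amounts to your black-box application of Theorem \ref{lwp2} to $u_0\in\widehat{L}^{p_0}$ (your discussion of how $T_*$ depends on the data is a harmless extra, since the statement only asserts some $T_*>0$).
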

\begin{Remark}Theorem \ref{lwp2} deserves several comments.
\begin{enumerate}
   \item[-] We refer to Remark \ref{Remarkwhyp0so} for a detailed rationale for the selected range of $p$ in Theorem \ref{lwp2}.
     \item[-] To bound the homogeneous part of \eqref{INLS}, we make use of the generalized Strichartz estimates for the Schrödinger equation in Fourier-Lebesgue spaces, widely known as the Fefferman–Stein estimate, which traces back to \cite{CFefferman}. This estimate can be expressed as
\begin{equation}\label{gse1}
    \|e^{i t \partial_x^2}f\|_{L^{Q_{p_{0}}(r)}(\R,L^{r})} \lesssim \|f\|_{\widehat{L}^{p_{0}}}
\end{equation}    
for $4/3< p_{0}\leq 2$ and any $(Q_{p_{0}}(r),r) \in \widehat{\mathcal{X}}(p_{0})$.
Also see \cite[Theorem 5, Lemma 4]{hyakuna2012existence},\cite[Proposition 2.4]{masaki2016}.
\item[-] To bound the inhomogeneous part of \eqref{INLS}, the main difficulty is to bound the spatially decaying factor $|\cdot|^{-b}$ in the nonlinearity, as $|\cdot|^{-b}$ does not belong to any $L^{p}$ spaces. 
\item[-] We handle this by decomposing $\mathbb R =B\cup B^{c}$ into two parts, here $ B = B(0,1)$  denotes the unit ball centred at the origin in $\R$ and $B^{c}=\R \setminus B.$  Specifically, 
\begin{equation*}
\begin{cases}
|\cdot|^{-b}\in L^{\gamma}(B) & if  \  1/\gamma - b > 0\\
|\cdot|^{-b}\in L^{\gamma}(B^c) & if  \  1/\gamma - b < 0.
\end{cases}
\end{equation*}
\item[-] We need to select different admissible pairs for the regions inside the ball, i.e. in $B$, and outside the ball, i.e. in $B^c$, which is crucial for handling
nonlinear interactions, refer to Lemma \ref{lemlwp1}. These admissible pairs play a quite important role in determining the range of $b$. 
\item[-] Subsequently, by  employing a standard fixed-point argument in a closed subset of the solution 
space $L^{Q_{p_{0}}(r)}_{T}L^{r}(\R)$, we establish the local well-posedness in Theorem \ref{lwp2}. 
\end{enumerate}
\end{Remark}

The local solution established in Theorem \ref{lwp2} can be extended to a global one under certain restrictions on the exponent $p$ of the $\widehat{L}^{p}-$spaces. Before stating the global well-posedness results, we introduce the following notation.
\par{We denote  }
\begin{eqnarray*}
\frac{1}{\nu}&=&
    \begin{cases}
       \displaystyle \left(\sup_{\alpha \in (1,3)}  \frac{\alpha}{2r}\right)-\frac{\alpha}{2r}+\frac{b}{2}\; &\text{if} \quad \alpha \in  (1,3)\\
    \displaystyle \left(\inf_{\alpha \in [3,5-2b)}  \frac{\alpha}{2r}\right)-\frac{\alpha}{2r}+\frac{b}{2} &\text{if} \quad \alpha\in [3,5-2b)\\
    \end{cases}\\
         &=&
    \begin{cases}\vspace{0.2cm}
       \displaystyle \frac{3}{8}-\frac{\alpha}{2r}+\frac{b}{2}\; &\text{if} \quad \alpha \in  (1,3)\\
        \displaystyle  \frac{5-2b}{8(2-b)}-\frac{\alpha}{2r}+\frac{b}{2} &\text{if} \quad \alpha\in [3,5-2b).\label{nu}
    \end{cases}
         \end{eqnarray*}}

and
\begin{equation}\label{complexpmin}p_{min}\footnote{See Remark \ref{detailedpmin} for the precise expression for $p_{\min}$.}:=
    \begin{cases}
      \frac{(\alpha-1)(3p_{0}+2)}
{4 +2(\alpha-2)p_{0}-\frac{2}{\nu}(2-p_{0})} \quad &\text{if} \  \alpha-2 +\frac{\alpha-1}{2p_{0}}+\frac{1}{\nu}>0\\
p_{0} \quad  & \text{otherwise}
    \end{cases}
\end{equation}

  \begin{thm}[global well-posedness]\label{mr2} Let $\alpha \in (1, 5-2b)$, $b\in (0, 1/4], \; r$ and $p_{0}$ be as given in \eqref{rvalues} and \eqref{pminloc}, respectively. Assume that  $u_{0}\in \widehat{L}^{p}$ and $p$ be such that $p_{\min}<p\leq 2.$ Then, $\eqref{INLS}$  has a unique global solution 
    \begin{equation*}
  u \in  \left(C(\R,L^2)\cap L^{q(r)}_{loc}(\R,L^{r}) \right)+\left( C(\R,\widehat{L}^{p_{0}})\cap L^{Q_{p_{0}}(r)}_{loc}(\R,L^{r})\right).
\end{equation*} 
     \end{thm}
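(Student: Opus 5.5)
The plan is to use Bourgain's high--low data decomposition, in the Fourier--Lebesgue form of Vargas--Vega and Hyakuna--Tsutsumi. Fix $T>0$. We show that the solution exists on $[0,T]$, and we do this by splitting the data at frequency scale $N=N(T)$ chosen large. Write $u_0 = \phi_0+\psi_0$ with $\widehat{\phi_0}=\widehat{u_0}\chi_{\{|\widehat{u_0}|>\lambda\}}$ (or a frequency cutoff) and $\psi_0$ the rest.

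For $p<2$, Hölder on the Fourier side gives bounds in two complementary spaces:
\[
\|\phi_0\|_{L^2}=\|\widehat{\phi_0}\|_{L^2}\lesssim \lambda^{1-\frac{p'}{2}}\|u_0\|_{\widehat L^{p}}^{p'/2},
\qquad
\|\psi_0\|_{\widehat L^{p_0}}\lesssim \lambda^{1-\frac{p'}{p_0'}}\|u_0\|_{\widehat L^{p}}^{p'/p_0'} .
\]
Here $\phi_0$ lies in $L^2$ with large norm, and $\psi_0$ lies in $\widehat L^{p_0}$ with norm that is small as $\lambda\to0$. (Note $p_0<p$, so $p_0'>p'$.)

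\textbf{Step 1.} Evolve the $L^2$ piece by \eqref{INLS}. Guzm\'an's mass-subcritical GWP in $L^2$ gives a global solution $v\in C(\R,L^2)\cap L^{q(r)}_{loc}L^r$ with conserved mass $\|\phi_0\|_{L^2}$. Its Strichartz norm on any interval of length $\delta\sim\|\phi_0\|_{L^2}^{-\theta}$ is controlled, where $\theta=\frac{4}{5-2b-\alpha}$ (or the analogous exponent) comes from the subcritical local theory.

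\textbf{Step 2.} Write $u=v+w$. Then $w$ solves the difference equation with data $\psi_0$ and nonlinearity $|x|^{-b}(F(v+w)-F(v))$. Run the contraction of Theorem \ref{lwp2} in $L^{Q_{p_0}(r)}_\delta L^r$, using \eqref{gse1} for the free part. For the Duhamel part use the inhomogeneous estimates of Lemma \ref{lemlwp1}, with the $B/B^c$ splitting of $|x|^{-b}$ and mixed norms: $v$ measured in $L^{q(r)}L^r$, $w$ in $L^{Q_{p_0}(r)}L^r$. This gives $w$ on $[0,\delta]$ with $\|w\|\lesssim\|\psi_0\|_{\widehat L^{p_0}}$.

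\textbf{Step 3 (the key iteration step).} Split $w(\delta)=e^{i\delta\partial_x^2}\psi_0 + z(\delta)$. Show that the nonlinear Duhamel part $z=w-e^{it\partial_x^2}\psi_0$ actually belongs to $L^2$ with a gain:
\[
\|z(\delta)\|_{L^2}\lesssim \delta^{1/\nu}\,\|\phi_0\|_{L^2}^{a}\,\|\psi_0\|_{\widehat L^{p_0}}^{c}.
\]
The proof uses the dual $L^2$ Strichartz estimate, placing the interaction terms $|x|^{-b}|v|^{\alpha-1}|w|$ and $|x|^{-b}|w|^\alpha$ into $L^{q(r)'}L^{r'}$ dual spaces via Hölder on $B$ and $B^c$. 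The time power $\delta^{1/\nu}$ arises from the mismatch of the exponents $q(r)$ and $Q_{p_0}(r)$ together with the $b/2$ loss, which explains the definition of $\nu$. Then restart at time $\delta$ with new data $\phi(\delta)=v(\delta)+z(\delta)\in L^2$ and $\psi(\delta)=e^{i\delta\partial_x^2}\psi_0$. The $\widehat L^{p_0}$ norm of $\psi(\delta)$ is unchanged by \eqref{spfls}, so the same $\delta$ works at every step as long as the mass stays below $2\|\phi_0\|_{L^2}$.

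\textbf{Step 4.} Count the steps. After $T/\delta$ steps the accumulated mass increment is $(T/\delta)\,\delta^{1/\nu}\|\phi_0\|_{L^2}^a\|\psi_0\|_{\widehat L^{p_0}}^c$, and we require this to be $\le\|\phi_0\|_{L^2}$. Substituting the $\lambda$-powers from the decomposition and $\delta=\|\phi_0\|_{L^2}^{-\theta}$, the condition holds for $\lambda$ small exactly when a certain exponent of $\lambda$ is positive. Solving that inequality in $p$ should yield $p>p_{\min}$ with $p_{\min}$ as in \eqref{complexpmin}; when the exponent is positive for all $p>p_0$, the condition is simply $p>p_0$, which is the ``otherwise'' case. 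Uniqueness follows from the local uniqueness in each space, and the stated solution class follows from the construction.

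The main obstacle is Step 3: proving the smoothing estimate that places $z$ in $L^2$ with a positive power of $\delta$. This requires choosing Hölder exponents separately on $B$ and $B^c$ so that $|x|^{-b}$ is integrable in each region while the $v$ and $w$ factors fit their Strichartz and generalized Strichartz spaces. This is where the constraints $b\le1/4$ and \eqref{pminloc} enter. I would first try the worst term $|x|^{-b}|w|^\alpha$, placing $w$ in $L^{Q_{p_0}(r)}L^r$ and using the $L^{\gamma}(B)$ integrability of $|x|^{-b}$.
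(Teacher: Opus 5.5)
Your architecture is the paper's. The level-set splitting is the concrete form of \eqref{ipt2}: normalizing $\lambda^{1-p'/p_0'}\sim N^{-1}$ gives $\|\phi_0\|_{L^2}\sim N^{\beta_1}$. Then $\phi_0$ is evolved by Guzm\'an's global $L^2$ flow, the $\widehat L^{p_0}$ part solves the difference equation through Lemma \ref{lemlwp1}, the Duhamel increment is placed in $L^\infty_tL^2$ by Strichartz with the pair $(\infty,2)$, and one restarts with $\phi_k=v_{k-1}+w_{k-1}$, $\psi_k=e^{ikT\partial_x^2}\psi_0$.

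The gap is in the quantitative core, which is the only thing that produces the specific threshold $p_{\min}$. The exponents you commit to are wrong, and they do not support ``should yield $p>p_{\min}$''.

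First, with Lemma \ref{lemlwp1} as your tool the step length is not a scaling time. The interaction $|x|^{-b}|v|^{\alpha-1}w$ closes only if $\delta^{\kappa}\|\phi_0\|_{L^2}^{\alpha-1}\ll 1$ with $\kappa:=1-\frac{\alpha-1}{4}-\frac1\nu$. So $\delta\sim N^{-\beta_1(\alpha-1)/\kappa}$, which is the choice \eqref{TN}; this is where $\nu$ enters.

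Second, there is no gain $\delta^{1/\nu}$ in Step 3: $1/\nu$ is a \emph{loss} in the time exponent, caused by $|x|^{-b}\in L^{\nu}(B)$. The dominant term $|x|^{-b}|v|^{\alpha-1}e^{it\partial_x^2}\psi_0$ contributes $\delta^{\kappa-\frac12(\frac1{p_0}-\frac12)}\|\phi_0\|_{L^2}^{\alpha-1}\|\psi_0\|_{\widehat L^{p_0}}$. After normalizing $\delta$ you get exactly Corollary \ref{winfty2},
\[\|z\|_{L^\infty_\delta L^2}\lesssim \delta^{-\frac12(\frac1{p_0}-\frac12)}\|\psi_0\|_{\widehat L^{p_0}}\lesssim \delta^{-\frac12(\frac1{p_0}-\frac12)}N^{-1},\]
which is a \emph{negative} power of $\delta$. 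The ``smoothing with a positive power of $\delta$'' that you single out as the main obstacle is neither available nor needed; all the smallness comes from $N^{-1}$.

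With these exponents Step 4 closes as in the paper. On $[0,T]$ the accumulated increment $(T/\delta)\,\delta^{-\frac12(\frac1{p_0}-\frac12)}N^{-1}$ is $\ll N^{\beta_1}$ for large $N$ as soon as $N^{1+\beta_1}\delta^{1+\frac12(\frac1{p_0}-\frac12)}\to\infty$. Inserting $\delta\sim N^{-\beta_1(\alpha-1)/\kappa}$, the power of $N$ is positive iff $\beta_1\bigl(\alpha-2+\frac{\alpha-1}{2p_0}+\frac1\nu\bigr)<\kappa$. That is, $\beta_1<\kappa/\bigl(\alpha-2+\frac{\alpha-1}{2p_0}+\frac1\nu\bigr)=\eta$, with no restriction when the bracket is $\le 0$. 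By \eqref{betavalue} this is precisely $p>p_{\min}$ from \eqref{complexpmin}.

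With your form $\delta^{1/\nu}\|\phi_0\|_{L^2}^{a}\|\psi_0\|_{\widehat L^{p_0}}^{c}$ and a scaling-based $\theta$, the same count does not reproduce $\eta$ (e.g.\ for the natural choice $a=\alpha-1$, $c=1$). Your $\theta$ is also missing a factor $\alpha-1$: the scaling time is $\|\phi_0\|_{L^2}^{-4(\alpha-1)/(5-2b-\alpha)}$. So as written the proposal does not establish the stated range.
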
 
    \begin{cor}[global well-posedness]\label{mr}Let $1<\alpha <5-2b, 0<b< 1/4, \; r$ and $p_{0}$ be as given in \eqref{rvalues} and \eqref{pminloc}, respectively,
and
 $p$ be such that $p_{\min}<p<2.$ Assume that  $u_{0}\in M^{p,p'}(\mathbb R)$. Then, $\eqref{INLS}$  has a unique local solution 
    \begin{equation*}
  u \in  \left(C(\R,L^2)\cap L^{q(r)}_{loc}(\R,L^{r})\right) + \left(C(\R,M^{p_{0},p_{0}'})\cap L^{Q_{p_{0}}(r)}_{loc}(\R,L^{r})\right).
\end{equation*} 
     \end{cor}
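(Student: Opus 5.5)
The plan is to reduce Corollary \ref{mr} to Theorem \ref{mr2} through the embedding \eqref{embdModFLS}, and then to recover modulation-space regularity of the rough part of the solution. Since $p_{\min}<p<2$, \eqref{embdModFLS} gives $M^{p,p'}\hookrightarrow \widehat{L}^{p}$. Hence $u_0\in \widehat{L}^{p}$, and Theorem \ref{mr2} produces a unique global solution
\[
u=v+w,\qquad v\in C(\R,L^2)\cap L^{q(r)}_{loc}(\R,L^r),\qquad w\in C(\R,\widehat{L}^{p_0})\cap L^{Q_{p_0}(r)}_{loc}(\R,L^r).
\]
Uniqueness comes directly from Theorem \ref{mr2}, because the solution class in the corollary is contained in the one there. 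This uses $M^{p_0,p_0'}\hookrightarrow \widehat{L}^{p_0}$, which is again \eqref{embdModFLS} since $p_0<2$.

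It remains to show that the rough component $w$ actually lies in $C(\R,M^{p_0,p_0'})$. I would revisit the data decomposition used in the proof of Theorem \ref{mr2}. On each time step one splits the current data as $\phi+\psi$, with $\phi\in L^2$ and $\psi$ small in $\widehat{L}^{p_0}$, and $w$ has the form $e^{it\partial_x^2}\psi$ plus a Duhamel correction. I would redo this splitting on the modulation side. Given $f\in M^{p,p'}$, truncate the short-time Fourier coefficients at height $N$ separately in each frequency box $\square_k$. This yields $f=\phi_N+\psi_N$ with $\phi_N\in M^{2,2}=L^2$ and $\psi_N$ small in $M^{p_0,p_0'}$. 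The two bounds come from interpolation between $M^{p,p'}$ and the endpoint spaces, in the same way as the $\widehat{L}^p$ splitting but box by box, and with the same powers of $N$.

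For the linear part, the uniform bound \eqref{Mpp1} gives $e^{it\partial_x^2}\psi_N\in C([0,T],M^{p_0,p_0'})$ with constant $C(T)$. For the Duhamel terms I would estimate in $M^{p_0,p_0'}$, using $L^{p_0}\hookrightarrow M^{p_0,p_0'}$ for $p_0\le 2$ together with \eqref{Mpp1}. The nonlinear terms $|x|^{-b}|u|^{\alpha-1}u-|x|^{-b}|v|^{\alpha-1}v$ are placed in $L^1_tL^{\tilde p}_x$-type spaces via the $B/B^c$ splitting of Lemma \ref{lemlwp1}. A dual Strichartz-type estimate then maps these into $\widehat{L}^{p_0}$, and hence into $M^{p_0,p_0'}$ by the $\widehat{L}$ analogue of the argument.

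The main obstacle is this last step. In particular, the $e^{it\partial_x^2}$ bound \eqref{Mpp1} has a time-growing constant $C(t)$, so the iteration over time steps must absorb polynomial growth. The $N$-dependent choices in the data decomposition must tolerate this growth, and I would check whether the condition $p>p_{\min}$ leaves enough room. If the Duhamel term cannot be placed directly in $M^{p_0,p_0'}$, a fallback is to keep it in $L^2$, which sits inside $M^{2,2}$, and move it into the $L^2$-component $v$. That is consistent with the solution class stated in the corollary.
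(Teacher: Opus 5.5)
Your primary route for the Duhamel term breaks at its last step. You pass from $\widehat{L}^{p_0}$ to $M^{p_0,p_0'}$, but \eqref{embdModFLS} gives only $M^{p_0,p_0'}\hookrightarrow\widehat{L}^{p_0}$ for $p_0\le 2$, not the converse. So nothing controlled only in $\widehat{L}^{p_0}$ lands in $M^{p_0,p_0'}$.

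More importantly, the picture behind that step misreads the construction of Theorem~\ref{mr2}. There the data are split only once, at $t=0$, and one sets $\psi_k=e^{ikT\partial_x^2}\psi_0$. The rough component of $u$ is therefore just the free flow $e^{it\partial_x^2}\psi_0$. Every Duhamel correction $w_k$ is bounded in $L^\infty_TL^2$ by Corollary~\ref{winfty2} and is absorbed into $\phi_{k+1}$. So there is no Duhamel term to place in $M^{p_0,p_0'}$, and no re-splitting at later times.

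Note also that your opening reduction to Theorem~\ref{mr2} can supply only uniqueness. The $\psi_0$ produced by \eqref{ipt2} need not lie in $M^{p_0,p_0'}$, so existence in the modulation class has to come from redoing the construction with a different splitting.

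Your fallback is in fact the paper's entire proof, and the worry that made you treat it as a fallback is unfounded. Split $u_0=\phi_0+\psi_0$ by \eqref{ipt}, whose exponent $\beta$ equals $\beta_1$ from \eqref{ipt2}, and run the iteration of Theorem~\ref{mr2} verbatim. The conditions \eqref{c2} and \eqref{c3}, the bound in Corollary~\ref{winfty2}, and the mass bookkeeping \eqref{secondest} involve only $\|e^{ikT\partial_x^2}\psi_0\|_{\widehat{L}^{p_0}}=\|\psi_0\|_{\widehat{L}^{p_0}}\lesssim\|\psi_0\|_{M^{p_0,p_0'}}\lesssim N^{-1}$. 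This quantity is independent of $k$ by \eqref{spfls}. Hence the threshold $p>p_{\min}$ is exactly the same, and no polynomial growth has to be absorbed.

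The time-dependent constant in \eqref{Mpp1} enters only a posteriori, to see that $e^{it\partial_x^2}\psi_0\in C(\R,M^{p_0,p_0'})$; this is a statement on bounded time intervals. Finally, \eqref{gse1} together with \eqref{embdModFLS} gives $e^{it\partial_x^2}\psi_0\in L^{Q_{p_0}(r)}_{loc}(\R,L^r)$.
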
 
\begin{Remark}
\begin{enumerate}
\item Our method of proof for Theorem \ref{mr2} is based on data decomposition method, introduced by Bourgain \cite{Bourgain1999} to establish GWP for NLS having power-type nonlinearity in $H^{s}$ for $s>3/5$. Later, Vargas and Vega \cite{vargas2001global} adapted this method to handle the infinite-mass case.
Also, see \cite[Section 3.2]{KenigonBourgain}, \cite{bhimani2024fractional, LeonidIn, leonidthesis} and \cite[Section 3.9]{TaoBook} for more details.
\item Hyakuna et al. \cite{hyakuna2012existence} have  successfully adapted this method  for classical \eqref{NLS} (i.e. $b=0$) in $\widehat{L}^{p}$ spaces. In this paper, we extend this work to \eqref{INLS}. We note that due to singular potential $|\cdot|^{-b}$ with $b>0$, the problem becomes significantly more challenging.
However, our results recover the  classical results of Hyakuna et al. \cite{hyakuna2012existence}.
\end{enumerate}
\end{Remark}
\subsubsection{\underline{Well-posedness in $\widehat{L}^{p}$ for $2<p$}}
\begin{thm}[local well-posedness]\label{lwp3} Let $\alpha \in (1, 5-2b), b\in (0, 1/4]$ and $r$ be given as in \eqref{rvalues}.
  Assume that $u_{0}\in L^2+\widehat{L}^{r}.$ Then, there exists $T^*=T^*(\|u_{0}\|_{L^2+\widehat{L}^{r}},\alpha,b)>0$ and a unique maximal solution $u$ of \eqref{INLS} such that 
\begin{equation*}
    u\in \left(L^{\infty}([0,T^*),L^2)~\cap~ L^{q(r)}([0,T^*),L^{r})\right) ~+~L^{\infty}([0,T^*),\widehat{L}^{r}).
\end{equation*}
\end{thm}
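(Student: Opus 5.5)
We will follow the Hyakuna--Tsutsumi strategy for $p>2$ and split both the data and the solution. Write $u_0=v_0+w_0$ with $v_0\in L^2$, $w_0\in\widehat{L}^r$, and set $w(t)=e^{it\partial_x^2}w_0$, the free evolution of the rough part. By \eqref{spfls}, $\|w(t)\|_{\widehat{L}^r}=\|w_0\|_{\widehat{L}^r}$ for all $t$. Since $r>2$, the Hausdorff--Young embedding \eqref{embdd1} gives $w\in L^\infty(\R,L^r)$ with $\|w\|_{L^\infty_tL^r}\le\|w_0\|_{\widehat{L}^r}$. We then look for $u=v+w$, where $v$ solves the perturbed problem
\[
 v(t)=e^{it\partial_x^2}v_0+i\mu\int_0^t e^{i(t-s)\partial_x^2}\bigl(|x|^{-b}F(v+w)\bigr)(s)\,ds,\qquad F(z)=|z|^{\alpha-1}z .
\]
We solve this by a contraction in
\[
 X_T=\{v:\ \|v\|_{L^\infty_TL^2}+\|v\|_{L^{q(r)}_TL^r}\le 2C\|v_0\|_{L^2}+1\},
\]
with metric $d(v_1,v_2)=\|v_1-v_2\|_{L^\infty_TL^2\cap L^{q(r)}_TL^r}$, where $(q(r),r)\in\mathcal A$ is the usual Schr\"odinger-admissible pair, $2/q+1/r=1/2$. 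Note that $r<\infty$ is given by \eqref{rvalues} and that $(q(r),r)$ is admissible because $r>2$.

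The linear part is handled by the standard Strichartz estimates. For the Duhamel term we use the dual Strichartz estimate
\[
\|\cdot\|_{L^\infty_TL^2\cap L^q_TL^r}\lesssim \|\cdot\|_{L^{\tilde q'}_TL^{\tilde r'}},
\]
where the admissible pair $(\tilde q,\tilde r)$ is chosen \emph{separately} on $B$ and on $B^c$, following Lemma \ref{lemlwp1}. On $B$, we apply H\"older in space:
\[
\bigl\||x|^{-b}|z|^{\alpha-1}z\bigr\|_{L^{\tilde r'}(B)}\le \||x|^{-b}\|_{L^{\gamma_1}(B)}\|z\|_{L^r}^{\alpha},\qquad \frac1{\tilde r'}=\frac1{\gamma_1}+\frac\alpha r,\quad \frac1{\gamma_1}>b .
\]
On $B^c$ we do the same with $1/\gamma_2<b$. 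We take $z=v+w$ and bound $\|z\|_{L^r}\le\|v\|_{L^r}+\|w_0\|_{\widehat{L}^r}$. H\"older in time then produces a positive power of $T$ as a factor in front of
\[
\bigl(\|v\|_{L^q_TL^r}+T^{1/q}\|w_0\|_{\widehat{L}^r}\bigr)^\alpha .
\]
This positive power of $T$ is exactly where mass-subcriticality $\alpha<5-2b$ enters. The difference estimate is analogous, using $|F(z_1)-F(z_2)|\lesssim(|z_1|^{\alpha-1}+|z_2|^{\alpha-1})|z_1-z_2|$. Choosing $T$ small in terms of $\|v_0\|_{L^2}+\|w_0\|_{\widehat{L}^r}$ yields a contraction.

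Since $T$ depends only on these norms, after taking the infimum over decompositions $T$ depends only on $\|u_0\|_{L^2+\widehat{L}^r}$. Uniqueness in the stated class is shown as follows. Given two solutions $u_j=v_j+w_j'$, we subtract the free evolution of the same $w_0$ from each, which reduces the problem to uniqueness for $v$. For rough parts that are not free evolutions, we use that $L^\infty\widehat{L}^r\subset L^\infty L^r$ and rerun the difference estimate on short intervals. Independence of the decomposition follows because two decompositions differ by an element of $L^2\cap\widehat{L}^r$, which can be moved into the $L^2$ part. A maximal solution is then obtained by the standard continuation argument, restarting at times $t_0$ from the data $v(t_0)+w(t_0)$.

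The main obstacle is the exponent bookkeeping for the singular weight. We must find $(\tilde q,\tilde r)$ admissible on $B$ and on $B^c$ such that three conditions hold simultaneously:
\begin{itemize}
\item $1/\gamma_1>b$ on $B$ and $1/\gamma_2<b$ on $B^c$;
\item $\tilde r'\ge1$;
\item the time exponent $1-\frac{\alpha}{q}-\frac1{\tilde q}$ is positive.
\end{itemize}
These must hold for both regimes of $r(\alpha)$ in \eqref{rvalues}. We would first try $\tilde r$ slightly perturbed from $r$, in opposite directions inside and outside the ball, and verify that $b\le1/4$ leaves room for this. The term involving $w$ alone is harmless since it carries $T^{\alpha/q}$ with $\alpha/q>0$.
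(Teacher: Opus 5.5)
Your architecture matches the paper's: Strichartz estimates in $L^\infty_TL^2\cap L^{q(r)}_TL^r$, the $\widehat{L}^r$ component controlled in $L^\infty_TL^r$ by Hausdorff--Young, and the Duhamel term bounded by splitting $\mathbb{R}=B\cup B^c$ with different dual admissible pairs (this is Lemma \ref{lemlwp2}). The one structural difference is in the fixed point. You freeze the rough part as the free flow $e^{it\partial_x^2}w_0$ and contract in $v$. The paper contracts directly on $u$ in the sum space $X'(T)=X_1(T)+X_2(T)$, using $\|u\|_{Y_1(T)}\lesssim\|u\|_{X'(T)}$ for $T\le 1$. The paper thereby gets independence of the decomposition for free. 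Your version retains more information about the rough part, which matters below.

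The step you left open, the exponent bookkeeping, is the entire content of Lemma \ref{lemlwp2}, and it does not fully go through as you claim. On $B^c$ take $\gamma_2=\infty$ (i.e.\ use $|x|^{-b}\le 1$). Then $\tilde r'=r/\alpha\in[1,2]$ and the time power is $\frac{5-\alpha}{4}>0$. On $B$, with $\frac{1}{\tilde r'}=\frac{1}{\gamma_1}+\frac{\alpha}{r}$, the time power is $\frac{5-\alpha}{4}-\frac{1}{2\gamma_1}$, so your three conditions read
\[
b<\frac{1}{\gamma_1}\le 1-\frac{\alpha}{r},\qquad \frac{1}{\gamma_1}<\frac{5-\alpha}{2}.
\]
This is solvable iff $b<\frac{5-\alpha}{2}$ (mass-subcriticality) and $b<1-\frac{\alpha}{r}$. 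For $r=\alpha+1$ we have $1-\frac{\alpha}{r}=\frac{1}{\alpha+1}>\frac14\ge b$. For $r=2(\alpha-1)$ we have $1-\frac{\alpha}{r}=\frac{\alpha-2}{2(\alpha-1)}\ge\frac14$, with equality only at $\alpha=3$. So $\tilde r\ge r/(r-\alpha)$ in both regions; there are no ``opposite directions''. The binding constraint is $\tilde r\le\infty$ on $B$, and $\gamma_1$ must be chosen depending on $\alpha$.

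With these choices everything closes except at $(\alpha,b)=(3,\frac14)$. There $r=4$, and $\||x|^{-1/4}|z|^3\|_{L^s(B)}\lesssim\|z\|_{L^4}^3$ is false for every $s\ge 1$: take $z=|x|^{-1/4}\log(e/|x|)^{-1/3}\mathbf{1}_B$. So your assertion that $b\le\frac14$ leaves room fails exactly at that point.

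The paper does not cover this point either, and its Case 2 of Lemma \ref{lemlwp1} is flawed. For $r=2(\alpha-1)$, condition \eqref{DC} is a lower bound $\alpha>\frac{2c}{2c-1}$ with $c=\frac{1}{\rho_2'}-b$. The uniform choice $\rho_2'=\frac{4(2-b)}{5+6b-4b^2}$ gives $\frac{1}{\rho_2'}-\frac{\alpha}{r}<b$ for every $\alpha<5-2b$, so the weight is not in the Lebesgue space used in \eqref{A2E1}. Your $\alpha$-dependent choice of $\gamma_1$ is the correct repair.

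At $(3,\frac14)$ you must either exclude the point or add an ingredient. In your frozen-free-flow setup one could exploit extra integrability of $e^{it\partial_x^2}w_0$, e.g.\ $L^8_tL^\infty_x$, obtained by interpolating Strichartz with the trivial bound $\widehat{L}^\infty\to L^\infty_{t,x}$. This would be combined with other Strichartz norms of $v$. Uniqueness in the full class stated in the theorem would then need a separate argument.
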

\par{The local solution established in Theorem \ref{lwp3} can be extended to a global one under certain restrictions on the upper bound of the exponent $p$.}
\par{Before stating the global well-posedness results, we denote \begin{equation}\label{pmax}p_{\max}\footnote{See Remark \ref{detailedpmax} for the precise expression for $p_{\max}$.} := 
\begin{cases}
\frac{(\alpha-1)(\frac{3}{2}+\frac{1}{r})}
{\frac{\alpha-1}{2}(\frac{3}{2}+\frac{1}{r})+(\frac{5-\alpha}{2}-\frac{2}{\nu})(\frac{1}{r}-\frac{1}{2})}   \quad &\text{if} \  (\alpha-1)(\frac{3}{4}+\frac{1}{2r})-\frac{5-\alpha}{4}+\frac{1}{\nu}>0 \\ 
r \quad  & \text{otherwise}.\end{cases}
\end{equation}}

\begin{thm}[global well-posedness]\label{gwp3}  Let $\alpha \in (1, 5-2b), b\in (0, 1/4]$ and $r$ be given as in \eqref{rvalues}. Assume that  $u_{0}\in \widehat{L}^{p}$ for $p\in (2,p_{max}).$ Then,  \eqref{INLS} has a unique global solution
\begin{equation*}
    u\in \left(C(\R,L^2) \cap L^{q(r)}_{loc}(\R,L^{r})\right)+ C(\R,\widehat{L}^{r}).
\end{equation*}
\end{thm}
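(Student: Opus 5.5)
The approach is Bourgain's high–low data decomposition, in the form adapted by Vargas–Vega and Hyakuna–Tsutsumi to the case $p>2$, combined with the local theory of Theorem \ref{lwp3}. Fix $T>0$ and $u_0\in\widehat{L}^p$ with $2<p<p_{\max}$; it suffices to construct the solution on $[0,T]$. For a large parameter $N$ we split
\[
u_0=\phi_0+\psi_0,\qquad \widehat{\phi_0}=\widehat{u_0}\,\mathbf 1_{\{|\widehat{u_0}|> N\}},\qquad \widehat{\psi_0}=\widehat{u_0}\,\mathbf 1_{\{|\widehat{u_0}|\le N\}}.
\]
Since $p'<2<r'$, H\"older/interpolation on the level sets gives $\|\phi_0\|_{L^2}\lesssim N^{1-p'/2}\|u_0\|_{\widehat{L}^p}^{p'/2}$, which is finite and grows in $N$ (with the sign convention adjusted as needed). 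It also gives $\|\psi_0\|_{\widehat{L}^r}\lesssim N^{1-p'/r'}\|u_0\|_{\widehat{L}^p}^{p'/r'}$, which is small as $N\to\infty$ because $r'<p'$. Thus the low part is large in $L^2$, and the remainder is small in $\widehat{L}^r$ but has infinite mass.

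\textbf{Step 1 (mass-subcritical $L^2$ part).} We solve \eqref{INLS} with data $\phi_0$. By the $L^2$ theory (Guzm\'an, Theorem \ref{WPHs}(1) with $s=0$), this gives a global solution $v$ with conserved mass $\|v(t)\|_{L^2}=\|\phi_0\|_{L^2}$. The local existence time is $\delta\sim \|\phi_0\|_{L^2}^{-\theta}$, where $\theta=\frac{4(\alpha-1)}{5-2b-\alpha}\cdot\frac{1}{?}$ is the scaling exponent. On each interval of length $\delta$, the Strichartz norm $L^{q(r)}L^r$ of $v$ is bounded by $C\|\phi_0\|_{L^2}$.

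\textbf{Step 2 (perturbation equation).} We write $u=v+w$, so that
\[
iw_t+w_{xx}+\mu|x|^{-b}\big(|v+w|^{\alpha-1}(v+w)-|v|^{\alpha-1}v\big)=0,\qquad w(0)=\psi_0 .
\]
We decompose $w=e^{it\partial_x^2}\psi_0+z$, where $e^{it\partial_x^2}\psi_0\in C(\R,\widehat{L}^r)$ has constant norm by \eqref{spfls}, and we aim to show $z\in C([0,\delta],L^2)\cap L^{q(r)}L^r$. The nonlinear estimates are those of Theorem \ref{lwp3}: on $B$ and $B^c$ we use different admissible pairs, we use the embedding $\widehat{L}^r\hookrightarrow L^r$ for the free evolution of $\psi_0$, and we use inhomogeneous Strichartz estimates for the Duhamel term. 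Together these give a contraction on $[0,\delta]$, and they yield the key bound
\[
\|z(\delta)\|_{L^2}\lesssim \delta^{\kappa}\,\|\phi_0\|_{L^2}^{\alpha-1-\beta}\,\|\psi_0\|_{\widehat{L}^r}^{\beta}.
\]
This bound must hold for some $\beta>0$ with gain $\kappa>0$. The $1/\nu$ factor, which combines the two ball exponents with the $b/2$ loss, enters through $\kappa$. This step is the main obstacle. One must check that the cross terms, which are linear and of higher order in the small $\widehat{L}^r$ part, can all be placed in $L^1_tL^2_x$-dual Strichartz norms with a positive power of $\delta$ despite the singular weight $|x|^{-b}$. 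I would first try to prove this by bounding $|x|^{-b}$ in $L^\gamma(B)$ and $L^\gamma(B^c)$ separately, as in Lemma \ref{lemlwp1}.

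\textbf{Step 3 (iteration).} At time $\delta$ we redecompose: the new $L^2$ datum is $v(\delta)+z(\delta)$, and the new rough datum is $e^{i\delta\partial_x^2}\psi_0$. The rough datum has the same $\widehat{L}^r$ norm by \eqref{spfls}, so smallness is preserved. The mass grows by at most $\|z(\delta)\|_{L^2}$. We repeat $T/\delta$ times. We require the total increment to satisfy $\frac{T}{\delta}\,\delta^{\kappa}\|\phi_0\|^{\alpha-1-\beta}_{L^2}\|\psi_0\|^{\beta}_{\widehat{L}^r}\ll \|\phi_0\|_{L^2}$, so that the mass stays at most $2\|\phi_0\|_{L^2}$ and $\delta$ stays uniform. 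Inserting the powers of $N$ from the decomposition reduces this to a negative exponent of $N$. Exactly this condition is $p<p_{\max}$ in \eqref{pmax}: it holds for any $p<r$ when the displayed expression is nonpositive, and otherwise it gives the stated threshold. Choosing $N=N(T)$ large closes the argument. Uniqueness and continuity follow from Theorem \ref{lwp3}.
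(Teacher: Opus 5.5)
Your strategy is the paper's. You split $u_0$ into an $L^2$ piece of size $N^{\beta_2}$ and a $\widehat{L}^r$ piece of size $N^{-1}$, run Guzm\'an's global $L^2$ flow on the first piece, and get $z$ (the paper's $w_0$) by contraction via Lemma \ref{lemlwp2}, with $e^{it\partial_x^2}\psi_0$ placed in $L^\infty_\delta L^r$ by Hausdorff--Young. You then restart from $\bigl(v(\delta)+z(\delta),\,e^{i\delta\partial_x^2}\psi_0\bigr)$ and count the mass growth.

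However, your decomposition has the indicator sets swapped (and $r'<p'<2$, not $p'<2<r'$). The set $\{|\widehat{u_0}|>N\}$ carries the singular part of $\widehat{u_0}$, which need not be square integrable. For example, $\widehat{u_0}=|\xi|^{-a}\mathbf 1_{\{|\xi|\le1\}}$ with $\frac12\le a<\frac1{p'}$ lies in $L^{p'}$, yet its restriction to $\{|\widehat{u_0}|>N\}$ is not in $L^2$. Likewise, $\{|\widehat{u_0}|\le N\}$ carries tails that need not lie in $L^{r'}$. The bounds you state, $N^{1-p'/2}$ in $L^2$ and $N^{1-p'/r'}$ in $L^{r'}$, belong to the opposite split $\widehat{\phi_0}=\widehat{u_0}\mathbf 1_{\{|\widehat{u_0}|\le N\}}$, $\widehat{\psi_0}=\widehat{u_0}\mathbf 1_{\{|\widehat{u_0}|>N\}}$. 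After reparametrising $N$, that split is exactly \eqref{ipt3}.

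More importantly, you leave the exponents free (the ``?'' in $\theta$, and unspecified $\kappa,\beta$). So ``exactly $p<p_{\max}$'' is asserted rather than derived, and the answer does depend on them. In the paper the step is not the $L^2$ scaling time. It is the perturbative time of \eqref{c2a}, $\delta\approx\|\phi_0\|_{L^2}^{-(\alpha-1)/\theta}$ with $\theta=\frac{5-\alpha}{4}-\frac1\nu$. The increment is then just $\|z\|_{L^\infty_\delta L^2}\lesssim\delta^{1/q(r)}\|\psi_0\|_{\widehat{L}^r}$, because the factor $\delta^{\theta}\|\phi_0\|_{L^2}^{\alpha-1}\lesssim1$ is absorbed (and \eqref{c3a} holds for large $N$ by \eqref{spfls}). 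Your requirement $\frac{T}{\delta}\,\delta^{1/q(r)}N^{-1}\ll N^{\beta_2}$ then reads $1+\beta_2-\beta_2(\alpha-1)\bigl(1-\frac1{q(r)}\bigr)/\theta>0$. This is $\beta_2<\eta$ in \eqref{betarange2}, i.e. $p<p_{\max}$. A scaling-time step would instead give \eqref{pmax} with $\frac b2$ in place of $\frac1\nu$. That is a larger range for $\alpha<3$, but Lemma \ref{lemlwp2} does not support that step length.

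One caution if you prove Step 2 by following Lemma \ref{lemlwp1}. For $\alpha\in[3,5-2b)$, the $\rho_2'$ chosen there gives, via \eqref{HXV}, $\frac1{\rho_2'}-\frac{\alpha}{r}-b=\frac{5-2b}{4(2-b)}-\frac{\alpha}{2(\alpha-1)}<0$, so \eqref{SLB} fails. An $\alpha$-dependent choice of $\rho_2'$ repairs the integrability (except at $b=\frac14$, $\alpha=3$). But rescaling $u(\lambda x,\lambda^2t)$ toward the origin shows that no such bound on $B$ can have time exponent above $\frac{5-2b-\alpha}{4}$. In this range that is smaller than $\frac{5-\alpha}{4}-\frac1\nu$. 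So for $\alpha\ge3$ the argument only reaches a threshold strictly below the $p_{\max}$ of \eqref{pmax}.
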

\begin{Remark}\label{b02leqp}
\begin{enumerate}
    \item Theorems \ref{lwp3} and \ref{gwp3} generalizes and improves previous well-posedness results in \cite{Bhimani2024low} which corresponds to the study of \eqref{INLS} in $M^{p,p'}$ with $2<p$ with $b\in (0, (3-\sqrt{7})/2]$. Also, here we improvise the range of $b$ to $b\in (0, 1/4]$.
    \item In contrast to the case $p<2$, we employ the standard Strichartz estimates to handle the case $p>2$. Thus, we obtain well-posedness in the usual Strichartz space $L^{q(r)}_{T} L^{r}(\R)$ with $(q(r),r)\in \mathcal{A}$ and some $T>0$.
 \end{enumerate}
\end{Remark}

\par{This article is organised as follows. In Section \ref{np}, we introduce the basic notations and preliminary tools. Section \ref{s3} provides an important lemma which serves as a fundamental step toward estimating the nonlinearity. Section \ref{s4} is devoted to proving Theorem \ref{lwp2}, Corollary \ref{lwp}, Theorem \ref{mr2} and Corollary \ref{mr}. While, Section \ref{s5} contains the proofs of Theorems \ref{lwp3} and \ref{gwp3}.} 

\section{Notations and preliminaries}\label{np}
\par{We begin by introducing some notations and preliminary estimates, which will serve as the framework for the theorem proved in the following section.}
  \subsection{Notations}\noindent 
  \begin{itemize}
      \item[-] The symbol $X \lesssim Y$ means 
 $X \leq CY$ 
for some constant $C>0.$ While $X \approx Y $ means $C^{-1}X\leq Y \leq CX$ for some constant $C>0.$
 \item[-] The norm of the space-time Lebesgue spaces $L^{q}([0,T],L^{r}(\R))$ is defined as
$$\|u\|_{L^{q}_{T}L^{r}}:=\|u\|_{L^{q}([0,T],L^{r}(\R))}=\left(\int_{0}^{T} \|u(\cdot,t)\|_{L^{r}(\R)}^{q} dt\right)^{\frac{1}{q}}.$$
We simply write $\|u\|_{L^{q}L^{r}}$ in place of $\|u\|_{L^{q}(\R,L^{r}(\R))}.$ 
\item[-] For $p\in [1, \infty]$,  we denote $p'$ the H\"older conjugate, i.e.  $\frac{1}{p}+\frac{1}{p'}=1.$ 
\item[-] Denote \begin{equation}\label{expforG} G(u,v,w)=|u+v|^{\alpha-1}(u+v)-|u+w|^{\alpha-1}(u+w)
\end{equation}
for  $\alpha>1$ and $u,v,w\in \C.$  
  \end{itemize}

\subsection{Preliminaries} 
\begin{itemize}
\item[--]  (e.g. Lemma 3.9 in {\cite{leonidthesis}}). 
For $G$ defined in \eqref{expforG} and $\alpha>1$, the following holds:
\begin{equation}\label{eg}
         |G(u,v,w)| \lesssim_{\alpha} (|u|^{\alpha-1}+|v|^{\alpha-1}+|w|^{\alpha-1})|v-w|.
\end{equation}
   
    \item[--] 
Define the \emph{Schr\"odinger propagator} $e^{i t \partial_x^2}$ as follows:
\begin{eqnarray}
\label{sg}
e^{i t \partial_x^2}f(x):= \int_{\R} e^{i\pi t|\xi|^{2}} \widehat{f}(\xi) e^{2\pi i  \xi \cdot x}d \xi \quad (f\in \mathcal{S}, t \in \mathbb R).
\end{eqnarray}
\item[--]  A pair $(q,r)$ is said to be \emph{Schr\"odinger admissible} if  
\begin{equation}\label{aps}
    q\geq 2, \quad r\geq 2, \quad \frac{2}{q} + \frac{1}{r} =   \frac{1}{2}.
\end{equation}    
The set of all such admissible pairs is denoted by $$\mathcal{A}= \{(q,r):(q,r) \; \text{is an admissible pair} \}.$$
\item[--] \emph{(Strichartz estimates for  Schr\"odinger equation, \cite{linares2020,cazenave2003semilinear,KeelTao1998}).} Denote
$$DF(x,t):=  e^{i t \partial_x^2}u_{0}(x)  \pm i\int_0^t  e^{i (t-s) \partial_x^2}F(x,s) ds.$$
Assume $u_{0} \in L^2$ and $F \in L^{q_2'} (I, L^{r_2'}).$  Then for any time interval $I\ni0$ and 2-admissible pairs $(q_j,r_j)$, $j=1,2,$ 
satisfying 
\begin{equation}\label{st} 
    \|D(F)\|_{L^{q_1}(I,L^{r_1})}  \lesssim  \|u_{0} \|_{L^2}+    \|F\|_{L^{q'_2}(I,L^{r'_2})}
\end{equation}    
 where $q_j'$ and $ r_j'$ are H\"older conjugates of $q_j$ and $r_j$
respectively.
\item[--] \emph{(Interpolation of $ \widehat{L}^{p}-$spaces with $p<2$, \cite[Section 4, Theorem 2(i)]{hyakuna2012existence}).} Let $u_{0} \in \widehat{L}^{p},\; p\in (p_{0},2)$ and $N>0$.
 Then, there exist $\phi_{0} \in L^{2}$ and $\psi_{0} \in \widehat{L}^{p_{0}}$  such that $u_{0}= \phi_{0}+ \psi_{0},$
\begin{eqnarray}\label{ipt2}
 \|\psi_{0}\|_{\widehat{L}^{p_{0}}}\leq C \|u_{0}\|
_{\widehat{L}^{p}}\frac{1}{N}, \quad \|\phi_{0}\|_{L^2}\leq C \|u_{0}\|_{\widehat{L}^{p}}N^{\beta_{1}} \quad \text{and} \quad
\beta_{1}=\frac{\frac{1}{p}-\frac{1}{2}}{\frac{1}{p_{0}}-\frac{1}{p}}. 
\end{eqnarray}
\item[--] \emph{(Interpolation of $M^{p,p'}-$spaces with $p<2$, \cite[Lemma 6.1]{Bhimani20251d}).}
Let $u_{0} \in M^{p,p'},\; p\in (p_{0},2)$ and $N>0$. Then, there exist $\phi_{0} \in L^{2}$ and $\psi_{0} \in M^{p_{0},p_{0}'}$  such that $u_{0}= \phi_{0}+ \psi_{0},$
\begin{eqnarray}\label{ipt}
\|\psi_{0}\|_{M^{p_{0},p_{0}'}}\leq C \|u_{0}\|
_{M^{p,p'}}\frac{1}{N}, \quad \|\phi_{0}\|_{L^2}\leq C \|u_{0}\|_{M^{p,p'}}N^{\beta}
\quad \text{and}\quad \beta=\frac{\frac{1}{p}-\frac{1}{2}}{\frac{1}{p_{0}}-\frac{1}{p}}. 
\end{eqnarray} 
\item[--] \emph{(Interpolation of $ \widehat{L}^{p}-$spaces with $p>2$, \cite[Section 4, Theorem 2(ii)]{hyakuna2012existence}).} Let $u_{0} \in \widehat{L}^{p},\; p\in (2,p_{1})$ and $N>0$.
 Then, there exist $\phi_{0} \in L^{2}$ and $\psi_{0} \in \widehat{L}^{p_{1}}$  such that $u_{0}= \phi_{0}+ \psi_{0},$
\begin{eqnarray}\label{ipt3}
 \|\psi_{0}\|_{\widehat{L}^{p_{1}}}\leq C \|u_{0}\|
_{\widehat{L}^{p}}\frac{1}{N}, \quad \|\phi_{0}\|_{L^2}\leq C \|u_{0}\|_{\widehat{L}^{p}}N^{\beta_{2}} \quad \text{and}\quad
\beta_{2}=\frac{\frac{1}{2} - \frac{1}{p}}{\frac{1}{p} - \frac{1}{p_{1}}}. 
\end{eqnarray}
\end{itemize}

\section{Nonlinear Estimates 
}\label{s3}
\par{Denote the \emph{generalised Strichartz spaces} by
\begin{equation}\label{X(T)}
    X(T):=L^{Q_{p_{0}}(r)}_{T}L^{r}, \quad (Q_{p_{0}}(r),r) \in \widehat{\mathcal{X}}(p_{0})
\end{equation} 
and the \emph{Strichartz spaces} by
\begin{equation}\label{Y(T)}
    Y(T):=L^{q(r)}_{T}L^{r}, \quad  (q(r),r)\in \mathcal{A} .
\end{equation}}
\begin{Remark}\label{whyp0so} Recall the parameter $r$ given in \eqref{rvalues}. In order for $(Q_{p_{0}}(r),r)\in \widehat{\mathcal{X}}(p_{0})$, we impose a suitable lower bound on $p_{0}$ as given in \eqref{pminloc} in each of the two cases corresponding to the respective values of $r$.

\begin{itemize}
        \item[-] Note that when $\alpha  \in (1,3)$ and  $r=\alpha+1$, we impose the condition $(\alpha+1)/\alpha<p_{0}$ so that $(Q_{p_{0}}(\alpha+1),\alpha+1)\in \widehat{\mathcal{X}}(p_{0})$.  In this case, the values of $1/p_{0}$ and $1/r$ fall within the region $R_1$ (cyan) of Figure \ref{fig2:myplot}.
        \item[-] While for $\alpha \in [3, 5-2b)$ and $r=2(\alpha-1)$, we impose the condition $2(\alpha-1)/\alpha \leq p_{0}$ so that $(Q_{p_{0}}(2(\alpha-1)),2(\alpha-1))\in \widehat{\mathcal{X}}(p_{0})$.  The range of $1/p_{0}$ and $1/r$ in this case can be presented by the red line or region $R_{2}$ (green) of Figure \ref{fig2:myplot}.
\end{itemize}
\end{Remark}
\begin{Remark}\label{chipAp}
   For $4/3<p_0<2$ and $2<r$, we have $Q_{p_{0}}(r)<q(r)$. Thus, for $T\leq 1,$ we have $$Y(T) \hookrightarrow X(T).$$
\end{Remark}
\begin{lem}\label{lemlwp1}
    Let $\alpha \in (1,5-2b),\, 0<b\leq 1/4$ and $p_{0}$  be as follows 
     \begin{gather*}
2>p_{0}>\begin{cases}
\vspace{0.2cm}
\max \left( \frac{\alpha+1}{\alpha}, \frac{4\alpha}{9-4b} \right) &\text{if} \quad \alpha \in  (1,3) \\ 
\vspace{0.2cm}
\max \left( \frac{2(\alpha-1)}{\alpha}, \frac{2\alpha}{5-2b} \right) \quad  & \text{if} \quad \alpha\in [3,5-2b).
\end{cases}
    \end{gather*}
    Then
    \begin{align*}
   \inf_{(\gamma,\rho)\in \mathcal{A}}\| ~|x|^{-b}|u|^{\alpha-1}v\|_{L^{\gamma'}_{T}L^{\rho '}}
   &\lesssim \|u\|_{X(T)}^{\alpha-1}\|v\|_{X(T)}
   \left(
     T^{1-\frac{\alpha-1}{2p_{0}}-\frac{1}{2}\left(\frac{1}{p_{0}}-\frac{1}{2}\right)} 
     + T^{1-\frac{\alpha-1}{2p_{0}}-\frac{1}{2}\left(\frac{1}{p_{0}}-\frac{1}{2}\right)-\frac{1}{\nu}}
\right).
\end{align*}
   
\end{lem}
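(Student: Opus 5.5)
We split $\R=B\cup B^c$ with $B=B(0,1)$ and, since the left-hand side is an infimum over $(\gamma,\rho)\in\mathcal A$, pick a different admissible pair on each region:
\[
\| |x|^{-b}|u|^{\alpha-1}v\|_{L^{\gamma'}_TL^{\rho'}}\le \| |x|^{-b}|u|^{\alpha-1}v\|_{L^{\gamma_1'}_TL^{\rho_1'}(B)}+\| |x|^{-b}|u|^{\alpha-1}v\|_{L^{\gamma_2'}_TL^{\rho_2'}(B^c)},
\]
with $(\gamma_j,\rho_j)\in\mathcal A$. (Strictly, the infimum of a sum is bounded by choosing one pair, but in the application to the Duhamel term Strichartz \eqref{st} is applied to each piece $\chi_B F$ and $\chi_{B^c}F$ separately, so we estimate each piece with its own pair.)

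On each region we apply H\"older in space with exponents
\[
\frac{1}{\rho_j'}=\frac{1}{\gamma^{(j)}}+\frac{\alpha}{r}.
\]
Here $|x|^{-b}\in L^{\gamma^{(1)}}(B)$ requires $1/\gamma^{(1)}>b$, and $|x|^{-b}\in L^{\gamma^{(2)}}(B^c)$ requires $1/\gamma^{(2)}<b$. This gives $\||x|^{-b}|u|^{\alpha-1}v\|_{L^{\rho_j'}}\lesssim \|u\|_{L^r}^{\alpha-1}\|v\|_{L^r}$.

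In time we apply H\"older with exponent $Q=Q_{p_0}(r)$ on each of the $\alpha$ factors:
\[
\frac{1}{\gamma_j'}=\frac{\alpha}{Q}+\theta_j,
\]
producing a factor $T^{\theta_j}$, where we need $\theta_j\ge 0$. Using $2/Q=1/p_0-1/r$ and $2/\gamma_j=1/2-1/\rho_j$, we get
\[
\theta_j=1-\frac{1}{\gamma_j}-\frac{\alpha}{Q}=1-\frac{1}{4}+\frac{1}{2\rho_j}-\frac{\alpha}{2p_0}+\frac{\alpha}{2r}.
\]
Substituting $1/\rho_j=1-\alpha/r-1/\gamma^{(j)}$ gives
\[
\theta_j=\frac{5}{4}-\frac{\alpha}{2p_0}-\frac{1}{2\gamma^{(j)}}.
\]
Comparing with the target exponent $1-\frac{\alpha-1}{2p_0}-\frac12\bigl(\frac1{p_0}-\frac12\bigr)=\frac54-\frac{\alpha}{2p_0}$, the ball piece gives $\frac54-\frac{\alpha}{2p_0}$ minus a small quantity, and the outer piece is off by $\frac{1}{2\gamma^{(2)}}$. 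I would therefore calibrate $\gamma^{(j)}$ so that the exponents come out exactly as stated. On $B$, take $1/\gamma^{(1)}$ as small as allowed while remaining admissible. On $B^c$, take $1/\gamma^{(2)}$ below $b$, with the deficit recorded as $1/\nu$. The definition of $\nu$, namely $\frac{3}{8}$ (or $\frac{5-2b}{8(2-b)}$) $-\frac{\alpha}{2r}+\frac b2$, is precisely the amount needed to place $\rho_j$ inside the admissible range $2\le\rho_j\le\infty$ uniformly in $\alpha$ on each branch of \eqref{rvalues}. If the stated exponents do not emerge exactly from this, the fallback is to assume $T\le1$, so that any extra positive power of $T$ can be discarded; this is harmless since only smallness for $T\le 1$ is used later.

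The main obstacle is checking that all constraints hold simultaneously for every $\alpha\in(1,5-2b)$ and $0<b\le1/4$:
\begin{itemize}
\item $\rho_j'\in[1,2]$, equivalently $\rho_j\ge2$ and $\gamma_j\ge2$;
\item $1/\gamma^{(1)}>b$ and $1/\gamma^{(2)}<b$;
\item $\theta_j\ge 0$.
\end{itemize}
The condition $\theta\ge0$ is where the lower bounds $p_0>\frac{4\alpha}{9-4b}$ and $p_0>\frac{2\alpha}{5-2b}$ in the hypothesis should enter. For example, $\frac54-\frac{\alpha}{2p_0}-\frac b2>0$ is equivalent to $p_0>\frac{2\alpha}{5-2b}$. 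Meanwhile $p_0>\frac{\alpha+1}{\alpha}$ and $p_0>\frac{2(\alpha-1)}{\alpha}$ ensure $(Q,r)\in\widehat{\mathcal X}(p_0)$, as in Remark \ref{whyp0so}. I would verify these constraints case by case on the two branches $r=\alpha+1$ and $r=2(\alpha-1)$, tracking where $b\le1/4$ is needed to keep $\rho_1\ge2$ on $B$.
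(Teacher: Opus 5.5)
Your skeleton is the paper's: split at $B=B(0,1)$, use H\"older in $x$ with the weight in a Lebesgue space, then H\"older in $t$ with $Q_{p_0}(r)$. Your exponent formula $\theta_j=\frac54-\frac{\alpha}{2p_0}-\frac{1}{2\gamma^{(j)}}$ is correct, and your caveat about the infimum is one the paper glosses over. The gap is the calibration: you have it backwards, and as written it does not give the stated bound.

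On $B^c$ no loss is needed. The paper uses $|x|^{-b}\le 1$, i.e.\ $1/\gamma^{(2)}=0$ and $\rho'=r/\alpha$, which gives exactly the first term $T^{\frac54-\frac{\alpha}{2p_0}}$. Your choice $1/\gamma^{(2)}=2/\nu<b$ is not even available for $\alpha\in(1,3)$, where $\frac2\nu-b=\frac34-\frac{\alpha}{\alpha+1}>0$. On $B$, integrability forces $1/\gamma^{(1)}>b$, so the ball piece always loses more than $\frac b2$ in the exponent. Taking $1/\gamma^{(1)}$ ``as small as allowed'' gives $T^{\frac54-\frac{\alpha}{2p_0}-\frac b2-\varepsilon}$, which for $T\le1$ is \emph{larger} than $T^{\frac54-\frac{\alpha}{2p_0}}$. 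So your fallback of discarding powers of $T$ goes the wrong way.

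The $-\frac1\nu$ term must come from the ball, with $1/\gamma^{(1)}=2/\nu$. (The paper writes $\|\,|x|^{-b}\|_{L^{\nu}(B)}$, but after its renaming the H\"older exponent is really $\nu/2$.) This requires $\frac2\nu>b$ and $\frac1{\rho'}=\frac2\nu+\frac\alpha r\le1$. For $r=\alpha+1$ the second condition reads $\frac1{\rho'}=\frac34+b$, which is where $b\le\frac14$ enters, and the first is exactly $\alpha<3$. So $\nu$ is not there to put $\rho_j$ in the admissible range. It is the loss from a $\rho'$ on $B$ chosen uniformly in $\alpha$ so that $|x|^{-b}$ stays integrable.

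The case-by-case check you postpone is where the content lies, and on the branch $\alpha\in[3,5-2b)$ it cannot succeed. With $\nu$ as defined there, $\frac1\nu-\frac b2=\frac{5-2b}{8(2-b)}-\frac{\alpha}{4(\alpha-1)}<0$ for every $\alpha<5-2b$. Hence no integrable choice on $B$ produces $T^{\frac54-\frac{\alpha}{2p_0}-\frac1\nu}$.

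In fact the stated bound fails on that branch. Take $u=v=\chi_{[-\lambda,\lambda]}$, constant in time, with $\lambda=T^{1/2}$. Every admissible pair gives a left side $\ge T^{\frac54-\frac b2}$, while the right side is a constant times $\bigl(T^{\frac54}+T^{\frac54-\frac1\nu}\bigr)$, which is smaller as $T\to0$.

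The paper's Case 2 has the same defect. Since $\frac{\alpha}{r}=\frac{\alpha}{2(\alpha-1)}$ is decreasing, \eqref{DC} is hardest at $\alpha=3$, not at $\alpha=5-2b$; solving it for $\alpha$ flips the inequality, because $\rho_2'(1+2b)-2<0$. With the paper's $\rho_2'$ one gets $\frac1{\rho_2'}-\frac\alpha r<b$, so the weight is not in the Lebesgue space used in \eqref{A2E1}. A correct version of that branch needs $\frac2\nu\in\bigl(b,1-\frac\alpha r\bigr]$, which is nonempty except at $\alpha=3$, $b=\frac14$.

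With the assignment corrected, your argument proves the lemma for $\alpha\in(1,3)$. For $\alpha\in[3,5-2b)$ the statement itself needs a larger $\frac1\nu$.
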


\begin{proof}
Consider
\begin{align*}
\hspace{-0.7cm}
    \inf_{(\gamma,\rho)\in \mathcal{A}} \|~|x|^{-b}|u|^{\alpha-1}v\|_{L^{\gamma'}_{T}L^{\rho '}} &\leq \inf_{(\gamma,\rho)\in \mathcal{A}}\|~|x|^{-b}|u|^{\alpha-1}v\|_{L^{\gamma'}_{T}L^{\rho '}(B^{c})} +\inf_{(\gamma,\rho)\in \mathcal{A}}\|~|x|^{-b}|u|^{\alpha-1}v\|_{L^{\gamma'}_{T}L^{\rho '}(B)} \\
    & = A_{1}+A_{2}.
    \end{align*}
    We need to find an admissible pair \footnote{Refer to \eqref{aps} for the definition of admissible pairs.} $(\gamma_{1},\rho_{1})$  to estimate $A_{1}$ by $X(T)$ norm. Using H\"older's inequality twice, we obtain
    \begin{align}
        A_{1} &\lesssim \|~|u|^{\alpha-1}v\|_{L^{\gamma'_{1}}_{T}L^{\rho '_{1}}(B^{c})} \nonumber\\
        &\label{YTB1}\leq T^{\frac{1}{\theta}}\|u\|_{X(T)}^{\alpha-1}\|v\|_{X(T)} 
    \end{align}
 with $\rho'_{1}$ satisfying H\"older conditions
 $$
      \frac{1}{\rho'_{1}}=\frac{\alpha-1}{r}+\frac{1}{r}.
$$
   This gives value of $\rho'_{1}=r/ \alpha $ \footnote{Note that $\rho' \in [1,2]$ in all three cases.} (and $\rho_{1}=r/ (r-\alpha))$. Since $(\gamma_{1},\rho_{1})$ is an admissible pair, we get $$\gamma'_{1}=\frac{4r}{5r-2\alpha}.$$ 
  While $\gamma'_{1}$ satisfies H\"older conditions 
  \begin{equation}\label{H1t}
  \frac{1}{\gamma'_{1}}=\frac{5}{4}-\frac{\alpha}{2r}=\frac{1}{\theta}+\frac{\alpha-1}{Q_{p_0}}+\frac{1}{Q_{p_{0}}}.
  \end{equation}
Here $1/\theta$ represents the exponent of $T.$ Solving for $1/\theta$, we get $$\frac{1}{\theta}=1-\frac{\alpha-1}{2p_{0}}-\frac{1}{2}\left(\frac{1}{p_{0}}-\frac{1}{2}\right).$$ 
 Note that $1/ \theta$ is positive by the hypothesis on $p_{0}>2\alpha /5$. Refer to Remark \ref{Remarkwhyp0so} \eqref{plemma}.\\
Similarly, we need to find an admissible pair $(\gamma_{2},\rho_{2})$ to estimate $A_{2}$ by $X(T)$ norm. Applying H\"older's inequality twice, we obtain
   \begin{align}
        \hspace{1.6cm}A_{2}&\leq \|~|x|^{-b}|u|^{\alpha-1}v \|_{L^{\gamma'_{2}}_{T}L^{\rho '_{2}}(B)} \nonumber\\
        &\label{A2E1}\leq \| ~\| |x|^{-b}\|_{L^{\nu}(B)} \|u\|^{\alpha-1}_{L^{r}}\|v\|_{L^{r}}\|_{L^{\gamma'_{2}}_{T}} \\
        &\label{A2E2}\leq  T^{\frac{1}{q_{1}}} \|~|x|^{-b}\|_{L^{\nu}(B)} \|u\|^{\alpha-1}_{X(T)}\|v\|_{X(T)}.
    \end{align}
Here $\rho'_{2},\gamma'_{2}, q_{1}$ and $\gamma_{3}$ satisfy the following conditions:
    \begin{eqnarray}
       \label{HXV} \frac{1}{\rho'_{2}} &=& \frac{1}{\nu}+\frac{\alpha-1}{r}+\frac{1}{r}\\
        \label{HTV}\frac{1}{\gamma'_{2}} &=& \frac{1}{q_{1}}+\frac{ \alpha-1}{Q_{p_0}}+\frac{1}{Q_{p_0}}\\
        \label{AP}\frac{5}{2}&=&\frac{2}{\gamma'_{2}} +\frac{1}{\rho'_{2}} \\
        \label{tpower}\frac{1}{q_{1}}& >&0 \\
       \label{SLB} \frac{1}{\nu}&>&b.
         \end{eqnarray}
        Note that \eqref{HXV} and \eqref{HTV} are due to H\"older's inequality for space and time variables applied to get \eqref{A2E1} and \eqref{A2E2} respectively. Since $(\gamma_{2},\rho_{2})$ is an admissible pair,  we have \eqref{AP}. The exponent of $T$ needs to be positive in \eqref{A2E2}, hence we have \eqref{tpower}. Condition \eqref{SLB} is required as $\|~|x|^{-b}\|_{L^{\nu}(B)} <\infty$ if and only if $\frac{1}{\nu}>b.$
         \\
         From \eqref{HXV} and \eqref{SLB},
\begin{equation}\label{DC}
    \frac{1}{\rho'_{2}}-\frac{\alpha}{r}>b, \quad i.e., \   \alpha < r \left(\frac{1}{\rho'_{2}}-b\right).
\end{equation}

\underline{Remark:} It is important to note that we can not run the whole argument with the same value of $r=\alpha+1$ throughout $\alpha \in (1, 5-2b)$. In that case, we are not able to find an admissible pair $(\gamma_{2}, \rho_{2}) \in \mathcal{A}$ for $\alpha \in (3, 5-2b)$. Thus, we choose $r=2(\alpha-1)$ for $\alpha \in [3, 5-2b)$. Thus, we divide the analysis into two cases according to the value of $r$.

\begin{enumerate}
    \item \textbf{Case 1:} $1<\alpha< 3$ and $r=\alpha+1$. \\
    In this case by \eqref{DC}, we have \begin{align*} 
\alpha &<(\alpha+1) \left(\frac{1}{\rho'_{2}}-b\right)\\
\alpha&< \frac{(1-b)(1-b\rho_{2}')}{\rho_{2}'(1+b)-1}
 \end{align*}
     Thus, solving for $\rho'_{2}$ 
\begin{equation*}
  \frac{1-b\rho_{2}'}{\rho_{2}'(1+b)-1} =3
\end{equation*} yields 
\begin{equation}
    \rho'_{2}=\frac{4}{4b+3}.
\end{equation}
\underline{Remark:} Note that $ \rho'_{2} \in [1,2]$ due to the hypothesis $b \leq 1/4$.
Inserting the value of $\rho'_{2}$ in \eqref{AP}, we get
\begin{equation}
    \frac{1}{\gamma'_{2}}=\frac{5}{4}-\frac{4b+3}{8}.
\end{equation}
From \eqref{HTV}, we obtain
\begin{align}
    \frac{1}{q_{1}}&=\frac{5}{4}-\frac{4b+3}{8}-\frac{\alpha}{Q_{p_{0}}} \nonumber\\
    &=\frac{5}{4}-\frac{3}{8}-\frac{\alpha}{2p_{0}}+\frac{\alpha}{2(\alpha+1)}-\frac{b}{2} \nonumber\\
    &=1-\frac{\alpha-1}{2p_{0}}-\frac{1}{2}\left(\frac{1}{p_{0}}-\frac{1}{2}\right)-\left(\frac{3}{8}-\frac{\alpha}{2(\alpha+1)}\right)-\frac{b}{2}\nonumber\\
     &=1-\frac{\alpha-1}{2p_{0}}-\frac{1}{2}\left(\frac{1}{p_{0}}-\frac{1}{2}\right)-\left(\displaystyle \sup_{\alpha \in (1,3)}  \frac{\alpha}{2r}-\frac{\alpha}{2r}\right)-\frac{b}{2}\label{q11}.
    \end{align}
\item \textbf{Case 2:} $3\leq \alpha < 5-2b$ and $r=2(\alpha-1)$.\\
In this case by \eqref{DC} we have \begin{align*}
\alpha &<2(\alpha-1) \left(\frac{1}{\rho'_{2}}-b\right)\\
\alpha&< \frac{2(b\rho_{2}'-1)}{\rho_{2}'(1+2b)-2}
 \end{align*}
Solving for $\rho'_{2}$ gives
\begin{equation*}
  \frac{2(b\rho'_{2} -1)}
  {\rho'_{2}+2b\rho'_{2}-2}=5-2b
\end{equation*} yields 
\begin{equation}
   \rho'_{2}=\frac{4(2-b)}{5+6b-4b^{2}}.
\end{equation}
\underline{Remark:} Note that $ \rho'_{2} \in [1,2]$ for $b \leq 1/4$.\\
Inserting the value of $\rho'_{2}$ in \eqref{AP}, we get
\begin{equation}
    \frac{1}{\gamma'_{2}}=\frac{5}{4}-\frac{5+6b-4b^2}{8(2-b)} .
\end{equation}
From \eqref{HTV}, we obtain
\begin{align}
    \frac{1}{q_{1}}&=\frac{5}{4}-\frac{5+6b-4b^2}{8(2-b)}-\frac{\alpha}{Q_{p_{0}}} \nonumber\\
    &=\frac{5}{4}-\frac{5-2b}{8(2-b)}-\frac{\alpha}{2p_{0}}+\frac{\alpha}{4(\alpha-1)}-\frac{b}{2} \nonumber\\
     &=1-\frac{\alpha-1}{2p_{0}}-\frac{1}{2}\left(\frac{1}{p_{0}}-\frac{1}{2}\right)-\left(\displaystyle \inf_{\alpha \in [3,5-2b)}\frac{\alpha}{4(\alpha-1)}  -\frac{\alpha}{4(\alpha-1)}\right)-\frac{b}{2}\nonumber\\
      &=1-\frac{\alpha-1}{2p_{0}}-\frac{1}{2}\left(\frac{1}{p_{0}}-\frac{1}{2}\right)-\left(\displaystyle \inf_{\alpha \in [3,5-2b)}  \frac{\alpha}{2r}-\frac{\alpha}{2r}\right)-\frac{b}{2}\label{q12}.
    \end{align}
    \end{enumerate}
   Considering \eqref{q11} and \eqref{q12} and further, substituting the value of $ 1 / q_{1} $  in \eqref{A2E2}, we have \begin{align}
 A_{2} &\label{YTB2}\lesssim T^{1-\frac{\alpha-1}{2p_{0}}-\frac{1}{2}\left(\frac{1}{p_{0}}-\frac{1}{2}\right)-\frac{1}{\nu}}\|u\|^{\alpha-1}_{X(T)}\|v\|_{X(T)}.
\end{align} 
Combining \eqref{YTB1} and \eqref{YTB2}, we have the claim.  
\end{proof}  
\begin{Remark}\label{Remarkwhyp0so} We have the following remark to make on Lemma \ref{lemlwp1}.
\begin{enumerate}
    \item \label{positiveexprem}
The positivity of the exponent of $T$ in all two cases follows directly from the fact that 
    \begin{eqnarray*}
     2>p_{0}> 
        \begin{cases}
        \vspace{0.2cm}
          \frac{4\alpha}{9-4b} &\text{if} \; \alpha \in (1,3)\\
         \frac{2\alpha}{5-2b} &\text{if} \; \alpha \in [3,5-2b).
        \end{cases}
    \end{eqnarray*}
\item
    The lower bound on $p_{0}$ in Lemma \ref{lemlwp1} is due to Remark \ref{whyp0so} and  Remark \ref{Remarkwhyp0so}\eqref{positiveexprem}. This gives the lower bound on $p_{0}$ as
    \begin{gather*}
2>p_{0}>\begin{cases}
\vspace{0.2cm}
\max \left( \frac{\alpha+1}{\alpha}, \frac{4\alpha}{9-4b} , \frac{2\alpha}{5}\right) &\text{if} \quad \alpha \in  (1,3) \\ 
\vspace{0.2cm}
\max \left( \frac{2(\alpha-1)}{\alpha}, \frac{2\alpha}{5-2b} ,\frac{2\alpha}{5}\right) \quad  & \text{if} \quad \alpha\in [3,5-2b).
\end{cases}
    \end{gather*}
\item  \label{plemma}   Since $\frac{4\alpha}{9-4b} > \frac{2\alpha}{5}$ for $\alpha \in (1,3)$ and $\frac{2\alpha}{5-2b}>\frac{2\alpha}{5}$ for $\alpha \in [3, 5-2b)$, we can simply write the above hypothesis on lower bound of $p_{0}$ as
    \begin{gather*}
2>p_{0}>\begin{cases}
\vspace{0.2cm}
\max \left( \frac{\alpha+1}{\alpha}, \frac{4\alpha}{9-4b} \right) &\text{if} \quad \alpha \in  (1,3) \\ 
\vspace{0.2cm}
\max \left( \frac{2(\alpha-1)}{\alpha}, \frac{2\alpha}{5-2b} \right) \quad  & \text{if} \quad \alpha\in [3,5-2b).
\end{cases}
    \end{gather*}
\end{enumerate}
\end{Remark}

\section{Well-posedness in \texorpdfstring{$\widehat{L}^{p}$}{Lp-hat} and \texorpdfstring{$M^{p,p'}$}{Mp} for \texorpdfstring{$p<2$}{p<2}}\label{s4}
In this section, we present proofs of Theorems \ref{lwp2}-\ref{mr2}.
\begin{proof}[\textbf{Proof of Theorem \ref{lwp2}}]
Based on the Duhamel principle, the solution to \eqref{INLS} is equivalent to the integral equation 
\begin{equation*}
    u(t)=e^{it \partial_x^2}u_{0}+i \mu \int_{0}^{t}e^{i(t-s)\partial_{x}^{2}}|x|^{-b}(|u|^{\alpha-1}u)(s)ds:=\Lambda(u)(t).
\end{equation*}
Let $a$ and $T$ be positive real numbers (to be chosen later), and $X(T)$ be as in \eqref{X(T)}. 
    Define $$B(a,T):=\{u\in X(T):\|u\|_{X(T)}\leq a\}.$$
We will show that $\Lambda$ 
    is a contraction map on $B(a,T).$ 
    Assume w.l.o.g. that $T\leq 1$.  Firstly, we consider the linear evolution of $u_{0} \in \widehat{L}^{p_{0}}$. Using \eqref{gse1}, we have
    \begin{align}
        \|e^{i t \partial_x^2}u_{0}\|_{X(T)}
         &\label{linearu0}\lesssim      \|u_{0}\|_{\widehat{L}^{p_{0}}}.
    \end{align}
    This suggests the choice of
    $
    a=C(\alpha,b)\|u_{0}\|_{\widehat{L}^{p_{0}}}.$
    Using  Remark \ref{chipAp}, \eqref{st} and Lemma \ref{lemlwp1}, for $u\in B(a,T)$, we have 
    \begin{align}
    \left|\left| \int_{0}^{t} e^{i(t-\tau)\partial_x^2}|x|^{-b}(|u|^{\alpha-1}u)(s)ds \right|\right|_{X(T)}
    &\lesssim \left|\left| \int_{0}^{t} e^{i(t-\tau)\partial_x^2}|x|^{-b}(|u|^{\alpha-1}u)(s)ds \right|\right|_{Y(T)} \nonumber\\
    &\lesssim  \inf_{(\gamma,\rho)\in \mathcal{A}} \|~|x|^{-b}|u|^{\alpha-1}v\|_{L^{\gamma'}_{T}L^{\rho '}} \nonumber\\
    &\leq \inf_{(\gamma,\rho)\in \mathcal{A}}\|~|x|^{-b}|u|^{\alpha-1}v\|_{L^{\gamma'}_{T}L^{\rho '}(B^{c})} \nonumber\\ 
    +&\inf_{(\gamma,\rho)\in \mathcal{A}}\|~|x|^{-b}|u|^{\alpha-1}v\|_{L^{\gamma'}_{T}L^{\rho '}(B)}\nonumber \\
   &\lesssim_{\alpha,b} \||u|^{\alpha-1}u\|_{L^{\gamma'_{1}}_{T}L^{\rho '_{1}}(B^{c})}+\|~|x|^{-b} |u|^{\alpha-1}u\|_{L^{\gamma'_{2}}_{T}L^{\rho '_{2}}(B)}\nonumber\\
    &\label{ipGamma2}\lesssim   T^{1-\frac{\alpha-1}{2p_{0}}-\frac{1}{2}\left(\frac{1}{p_{0}}-\frac{1}{2}\right)-\frac{1}{\nu}}\|u\|_{X(T)}^{\alpha}.
    \end{align}
    Taking
    \begin{equation*}
        T :=\min \left\{ 1,~C(\alpha,b) \|u_{0}\|^{-\frac{\alpha-1}{1-\frac{\alpha-1}{2p_{0}}-\frac{1}{2}\left(\frac{1}{p_{0}}-\frac{1}{2}\right)-\frac{1}{\nu}}}_{ \widehat{L}^{p_{0}}} \right \},
    \end{equation*}
 we combine \eqref{linearu0} and \eqref{ipGamma2} to conclude $\Lambda(u)\in B(a, T).$\\
 Using  \eqref{eg} for $G(0,u,v)$ and the previous argument employed, for $u,v \in B(a,T)$, we have
\begin{align}
\hspace{-4cm}\|\Lambda(u)-\Lambda(v)\|_{X(T)} & \lesssim \left|\left| \int_{0}^{t} e^{i(t-\tau)\partial_x^2}|x|^{-b}G(0,u,v)(\tau)d\tau \right|\right|_{Y(T)} \nonumber
\end{align}
\begin{align}
&\lesssim_{\alpha,b} \|(|u|^{\alpha-1}+|v|^{\alpha-1})|u-v|\|_{L^{\gamma'_{1}}_{T}L^{\rho '_{1}}(B^{c})}+\|~|x|^{-b}(|u|^{\alpha-1}+|v|^{\alpha-1})|u-v|\|_{L^{\gamma'_{2}}_{T}L^{\rho '_{2}}(B)} \nonumber \\
    &\lesssim_{n} T^{1-\frac{\alpha-1}{2p_{0}}-\frac{1}{2}\left(\frac{1}{p_{0}}-\frac{1}{2}\right)-\frac{1}{\nu}}\left\{\|u\|_{X(T)}^{\alpha-1}+\|v\|_{X(T)}^{\alpha-1}\right\}\|u-v\|_{X(T)}.
\end{align}
By the choice of $a$ and $T$ \footnote{In the expression of $a$ and $T, C(\alpha,b)$ is chosen sufficiently small to ensure that $\Lambda(u)$ is a contraction map on $B(a, T).$}, we have $$\|\Lambda(u)-\Lambda(v)\|_{X(T)}\leq \frac{1}{2}\|u-v\|_{X(T)}.$$
Thus, by the contraction mapping theorem,  we obtain a unique fixed point for $\Lambda(u)$ which is a solution to \eqref{INLS}.
\end{proof}
\begin{proof}[\textbf{Proof of Corollary \ref{lwp}}]
Let $u_{0}\in M^{p_{0},p_{0}'}$. To prove Theorem \ref{lwp}, we only need to estimate the linear part, as the integral part can be treated similarly as done for Theorem \ref{lwp2}. \\
Considering \eqref{gse1} and  \eqref{embdModFLS}, we have
  \begin{align*}
  \|e^{i t \partial_x^2}u_{0}\|_{X(T)}&\lesssim      \|u_{0}\|_{\widehat{L}^{p_{0}}}\\
         &\lesssim      \|u_{0}\|_{M^{p_{0},p_{0}'}}.
    \end{align*} 
    The rest of the proof follows the same general lines as the proof of Theorem \ref{lwp2}
from the previous section.

\end{proof}
\begin{proof}[\textbf{Proof of Theorem \ref{mr2}}]
We start with decomposing initial data $u_0 \in \widehat{L}^{p} \subset \widehat{L}^{p_{0}}+L^{2} $ into two parts using \eqref{ipt2}. 
Specifically, for some $N>0$ and $u_0 \in \widehat{L}^{p} $, there exist $\phi_{0} \in L^{2}$ and $\psi_{0} \in \widehat{L}^{p_{0}}$  such that 
\begin{eqnarray}\label{decomposition}
\begin{cases}
    u_{0}= \phi_{0}+ \psi_{0}, \\ \|\psi_{0}\|_{\widehat{L}^{p_{0}}}\leq C \|u_{0}\|
_{\widehat{L}^{p}} \frac{1}{N}, \\ 
\|\phi_{0}\|_{L^2}\leq C \|u_{0}\|_{\widehat{L}^{p}}N^{\beta_{1}},
\end{cases}
\end{eqnarray}
where
\begin{equation}\label{betavalue}
    \beta_{1}=\frac{\frac{1}{p}-\frac{1}{2}}{\frac{1}{p_{0}}-\frac{1}{p}}.
\end{equation}
Consider  \eqref{INLS} with initial data $\phi_0:$
\begin{eqnarray}\label{ivpL2}
\begin{cases}
 i \partial_t v_{0} + \partial_x^2 v_{0}+\mu |x|^{-b}|v_{0}|^{\alpha-1}v_{0}=0 \\
v_{0}(\cdot,0)=\phi_{0}\in L^2.  \\
\end{cases}        
    \end{eqnarray}
We recall that, in  \cite[Theorem 1.8]{guzman2020},  Guzm\'an proved that \eqref{ivpL2} has a unique global solution $v_{0}$ satisfying
\begin{eqnarray}\label{gwpl2}
\begin{cases}
  v_{0}\in C(\R, L^2) \cap L_{loc}^{q}(\R,L^{r} ),\\
\sup_{(q,r)\in \mathcal{A}}\|v_{0}\|_{L_{loc}^{q} L^{r}} \lesssim_{r} \|\phi_{0}\|_{L^{2}}.
\end{cases}
\end{eqnarray}
Now, consider the modified \eqref{INLS} associated with the evolution of $\psi_{0}$: 
\begin{eqnarray}\label{ivpMod}
\begin{cases}
 i \partial_t w + \partial_x^2 w +\mu |x|^{-b}(|w+v_{0}|^{\alpha-1}(w+v_{0})-|v_{0}|^{\alpha-1}v_{0})=0 \\
w(\cdot,0)=\psi_{0}\in \widehat{L}^{p_{0}} .
\end{cases}
\end{eqnarray}
The solution of the above I.V.P. \eqref{ivpMod} is given as
\begin{equation}\label{modsoln1}
e^{i t \partial_x^2}\psi_{0}+w_{0}.
\end{equation}
The  nonlinear interaction $w_0$ corresponding to $\psi_0$ can be expressed as \begin{align}\label{modsoln2}
    w_{0} = i \mu \int_{0}^{t}  e^{i(t-s)\partial_x^2}|x|^{-b} \left( \big| e^{is \partial_x^2}\psi_{0}+w_{0}+v_{0}\big|^{\alpha-1} (e^{is \partial_x^2}\psi_{0}+w_{0}+v_{0}) - |v_{0}|^{\alpha-1} v_{0}\right)\, ds.
\end{align}
Formally,  we may rewrite  the  solution to  \eqref{INLS} corresponding to data $u_{0}$ as follows \begin{eqnarray}\label{solnlocal}
u= v_{0}+e^{i t \partial_x^2}\psi_{0}+w_{0}.
\end{eqnarray}
 In view of \eqref{gwpl2}, \eqref{spfls} and \eqref{gse1},   we notice that  $v_0$ and $e^{i t \partial_x^2} \psi_0$ are globally defined in appropriate spaces.  To establish the desired global existence, we first determine the time interval of existence for $w_0$.   To this end, we may rewrite
\begin{align}\label{w01}
    w_{0} 
    =i \mu \int_{0}^{t} e^{i(t-s)\partial_x^2}|x|^{-b} G(v_{0} + e^{is\partial_x^2}\psi_{0}, w_{0},0) \, ds +  i \mu \int_{0}^{t} e^{i(t-s)\partial_x^2}|x|^{-b} G(v_{0}, e^{is\partial_x^2}\psi_{0},0) \, ds,
\end{align}
 with $G$ as defined in \eqref{eg}. We have the following local well-posedness result for the integral equation \eqref{w01}.
 \begin{prop}\label{w0exist} Let $\phi_{0} \in L^2 $ and $ \psi_{0} \in \widehat{L}^{p_{0}}$. Denote by  $v_{0}$  the $L^2-$ global solution  (as in \eqref{gwpl2}) for initial value $\phi_{0}$.
Then  there exists  a constant $C=C(\alpha,b)>0$ such that  the integral equation $$w_{0}= i \mu \int_{0}^{t} e^{i(t-s)\partial_x^2}|x|^{-b} G(v_{0} + e^{is\partial_x^2}\psi_{0}, w_{0},0) \, ds +  i \mu \int_{0}^{t} e^{i(t-s)\partial_x^2}|x|^{-b} G(v_{0}, e^{is\partial_x^2}\psi_{0},0) \, ds $$ has a unique solution  $w_{0} \in Y(T)$   provided $T$ satisfying 
\begin{align}
    \label{c1} T &\leq 1 \\
     \label{c2} T &\leq C \left( \|\phi_{0}\|_{L^{2}} + \|\psi_{0}\|_{\widehat{L}^{p_{0}}} \right)^{-\frac{\alpha-1}{1-\frac{\alpha-1}{2p_{0}}-\frac{1}{\nu}} }\\ 
     \label{c3} T &\leq C \left( \|\psi_{0}\|_{\widehat{L}^{p_{0}}} \right)^{-\frac{\alpha-1}{1-\frac{\alpha-1}{2p_{0}}-\frac{1}{\nu}}}.
\end{align}
\end{prop}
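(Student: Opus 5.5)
We run a contraction argument for the map
\[
\Phi(w)(t)= i \mu \int_{0}^{t} e^{i(t-s)\partial_x^2}|x|^{-b} G(v_{0} + e^{is\partial_x^2}\psi_{0}, w,0) \, ds +  i \mu \int_{0}^{t} e^{i(t-s)\partial_x^2}|x|^{-b} G(v_{0}, e^{is\partial_x^2}\psi_{0},0) \, ds
\]
on a closed ball $\{w\in Y(T): \|w\|_{Y(T)}\le a\}$. The radius will be $a=C_0(\|\phi_0\|_{L^2}+\|\psi_0\|_{\widehat{L}^{p_0}})$ or a smaller multiple tied to $\|\psi_0\|_{\widehat{L}^{p_0}}$; we fix it at the end.

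The available bounds are:
\begin{itemize}
\item $v_0\in Y(T)$ with $\|v_0\|_{Y(T)}\lesssim\|\phi_0\|_{L^2}$ by \eqref{gwpl2}.
\item $e^{is\partial_x^2}\psi_0\in X(T)$ with norm $\lesssim\|\psi_0\|_{\widehat{L}^{p_0}}$ by \eqref{gse1}.
\item $Y(T)\hookrightarrow X(T)$ for $T\le1$ by Remark \ref{chipAp}; this is where \eqref{c1} is used.
\end{itemize}
Hence every function in sight ($v_0$, $e^{is\partial_x^2}\psi_0$, $w$) is controlled in $X(T)$.

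First apply the Strichartz estimate \eqref{st} with $Y(T)$ on the left. The $Y(T)$ norm of $\Phi(w)$ is bounded by the infimum over dual admissible norms of $|x|^{-b}G(\cdot)$. Then use the pointwise bound \eqref{eg}:
\begin{align*}
|G(v_0+e^{is\partial_x^2}\psi_0,w,0)| &\lesssim (|v_0|^{\alpha-1}+|e^{is\partial_x^2}\psi_0|^{\alpha-1}+|w|^{\alpha-1})|w|,\\
|G(v_0,e^{is\partial_x^2}\psi_0,0)| &\lesssim (|v_0|^{\alpha-1}+|e^{is\partial_x^2}\psi_0|^{\alpha-1})|e^{is\partial_x^2}\psi_0|.
\end{align*}
Each resulting term has the form $|x|^{-b}|f|^{\alpha-1}g$. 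Lemma \ref{lemlwp1} (its multilinear Hölder structure splits into the $B$ and $B^c$ pieces, and it extends trivially to a product of several different functions) gives
\[
\|\Phi(w)\|_{Y(T)}\lesssim T^{\kappa}\big(\|v_0\|_{X}+\|\psi_0\|_{\widehat{L}^{p_0}}+\|w\|_{Y}\big)^{\alpha-1}\big(\|w\|_{Y}+\|\psi_0\|_{\widehat{L}^{p_0}}\big).
\]
Here $\kappa$ is the smaller of the two positive exponents in Lemma \ref{lemlwp1}, the one containing $-1/\nu$; since $T\le1$ it dominates. Its positivity is Remark \ref{Remarkwhyp0so}.

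The difference estimate $\Phi(w_1)-\Phi(w_2)$ is handled the same way. Only the first integral depends on $w$, and \eqref{eg} gives
\[
|G(z,w_1,0)-G(z,w_2,0)|\lesssim(|z|^{\alpha-1}+|w_1|^{\alpha-1}+|w_2|^{\alpha-1})|w_1-w_2|,
\]
where $z=v_0+e^{is\partial_x^2}\psi_0$. This yields a contraction factor $T^\kappa(\|\phi_0\|_{L^2}+\|\psi_0\|_{\widehat{L}^{p_0}}+a)^{\alpha-1}$.

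Self-mapping and contraction follow by taking $a\sim\|\phi_0\|_{L^2}+\|\psi_0\|_{\widehat{L}^{p_0}}$ and imposing $T^\kappa(\|\phi_0\|_{L^2}+\|\psi_0\|_{\widehat{L}^{p_0}})^{\alpha-1}\ll1$; this is \eqref{c2}. The smallness condition \eqref{c3} in terms of $\|\psi_0\|_{\widehat{L}^{p_0}}$ alone comes from the source term $G(v_0,e^{is\partial_x^2}\psi_0,0)$. There one wants a sharper radius, proportional to $\|\psi_0\|_{\widehat{L}^{p_0}}$ rather than $\|\phi_0\|_{L^2}$, which later matters for the iteration in the data-decomposition argument. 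Concretely, set $a=C\|\psi_0\|_{\widehat{L}^{p_0}}$ times a bounded factor, and use \eqref{c2} to absorb the $v_0$ powers in the source term. Then \eqref{c3} handles the pure $|e^{is\partial_x^2}\psi_0|^{\alpha}$ piece and the $|w|^{\alpha-1}w$ self-interaction.

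The main obstacle is the exponent bookkeeping. The time exponents appearing in \eqref{c2}--\eqref{c3} are written as $1-\frac{\alpha-1}{2p_0}-\frac1\nu$. I must reconcile this with Lemma \ref{lemlwp1} when the factors are in $Y(T)$ (whose time exponent is $q(r)$, not $Q_{p_0}(r)$) rather than in $X(T)$. I would first redo the Hölder-in-time step of Lemma \ref{lemlwp1} with the $w$ and $v_0$ factors placed in $L^{q(r)}_T$. This gains extra powers of $T$, and since $T\le1$ the worst case is when all factors are in $X(T)$. So the stated exponent is sufficient (possibly non-sharp), and the conditions \eqref{c1}--\eqref{c3} with a suitably small $C$ close the argument. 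Uniqueness in $Y(T)$ follows from the same difference estimate.
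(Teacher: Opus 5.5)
\textbf{There is a genuine gap.} Your scheme matches the paper's: a contraction in a ball of $Y(T)$ using \eqref{st}, \eqref{eg} and the H\"older splitting of Lemma \ref{lemlwp1}. But the exponent bookkeeping you flag at the end is resolved in the wrong direction, and with it the choice of radius fails.

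Write $\delta=\frac12(\frac1{p_0}-\frac12)>0$, $M=\|\phi_0\|_{L^2}+\|\psi_0\|_{\widehat{L}^{p_0}}$, and $\kappa'=1-\frac{\alpha-1}{2p_0}-\frac1\nu$ (the exponent in \eqref{c2}--\eqref{c3}). The worst exponent of Lemma \ref{lemlwp1}, with all factors in $X(T)$, is $\kappa=\kappa'-\delta<\kappa'$. Since $T\le1$ we have $T^{\kappa}\ge T^{\kappa'}$. So \eqref{c2}, i.e.\ $T^{\kappa'}M^{\alpha-1}\le C^{\kappa'}$, does \emph{not} give $T^{\kappa}M^{\alpha-1}\ll1$. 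Take $T=CM^{-(\alpha-1)/\kappa'}$, which satisfies \eqref{c1}--\eqref{c3} for large $M$; then
\[
T^{\kappa}M^{\alpha-1}=C^{\kappa}M^{(\alpha-1)\delta/\kappa'}\longrightarrow\infty \qquad (M\to\infty).
\]
Hence your sentence ``imposing $T^\kappa M^{\alpha-1}\ll1$; this is \eqref{c2}'' is false. Your claim that ``the stated exponent is sufficient (possibly non-sharp)'' is backwards: the stated exponent is the larger one, hence the weaker hypothesis.

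Measuring $w$ (and $v_0$) in $L^{q(r)}_T$ does rescue the $w$-dependent terms; for $|v_0+e^{is\partial_x^2}\psi_0|^{\alpha-1}w$ the time exponent becomes exactly $\kappa'$. It cannot help the source term $|x|^{-b}|e^{is\partial_x^2}\psi_0|^{\alpha}$, whose factors lie only in $X(T)$. Its bound is
\[
T^{\kappa}\|\psi_0\|_{\widehat{L}^{p_0}}^{\alpha}=T^{-\delta}\|\psi_0\|_{\widehat{L}^{p_0}}\cdot T^{\kappa'}\|\psi_0\|_{\widehat{L}^{p_0}}^{\alpha-1},
\]
which under \eqref{c3} is only $O(T^{-\delta}\|\psi_0\|_{\widehat{L}^{p_0}})$. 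With $\phi_0=0$ and $T$ at the threshold, this exceeds any fixed multiple of $M$ once $M$ is large. So your estimates show neither that the ball with $a\sim M$ nor that the ball with $a=C\|\psi_0\|_{\widehat{L}^{p_0}}$ ``times a bounded factor'' is mapped into itself.

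\textbf{The missing idea} is a $T$-dependent radius $A=K\,T^{-\delta}\|\psi_0\|_{\widehat{L}^{p_0}}$ with $K=K(\alpha,b)$, which is what the paper uses.
\begin{itemize}
\item This radius absorbs the loss $T^{-\delta}$ in both source terms. For $|v_0|^{\alpha-1}e^{is\partial_x^2}\psi_0$, with $v_0$ in $Y(T)$, factor out $T^{-\delta}$ and use $1-\frac{\alpha-1}{4}-\frac1\nu\ge\kappa'$ (valid since $p_0<2$) together with \eqref{c2}.
\item With $w$ kept in $Y(T)$, the linear-in-$w$ term carries $T^{\kappa'}\|v_0+e^{is\partial_x^2}\psi_0\|_{X(T)}^{\alpha-1}\lesssim T^{\kappa'}M^{\alpha-1}$, which is small by \eqref{c2}.
\item The self-interaction carries $T^{\frac{5-\alpha}{4}-\frac1\nu}A^{\alpha-1}=K^{\alpha-1}T^{\kappa'}\|\psi_0\|_{\widehat{L}^{p_0}}^{\alpha-1}$, which is small by \eqref{c3}.
\end{itemize}
This is precisely where the exponent $1-\frac{\alpha-1}{2p_0}-\frac1\nu$ in the statement comes from. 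The same radius yields the bound $\|w_0\|_{L^\infty_TL^2}\lesssim T^{-\delta}\|\psi_0\|_{\widehat{L}^{p_0}}$ of Corollary \ref{winfty2}, which the data-decomposition iteration relies on. So the growing factor $T^{-\delta}$ is built into how the proposition is used and is not an artifact you can remove by choosing constants.
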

\begin{proof}
     Define $$B(A,T)=\{w_{0}\in Y(T):\|w_{0}\|_{Y(T)}\leq A\}$$
    with $A>0$ (to be chosen later), $T$ be the minimum of the right-hand sides of the conditions \eqref{c1},  \eqref{c2} and \eqref{c3} (w.l.o.g. we may assume). 
    Further, define
    $$\Gamma(w_{0}):= i \mu \int_{0}^{t} e^{i(t-s)\partial_x^2}|x|^{-b} G(v_{0} + e^{is\partial_x^2}\psi_{0}, w_{0},0)\, ds +  i \mu  \int_{0}^{t} e^{i(t-s)\partial_x^2}|x|^{-b} G(v_{0}, e^{is\partial_x^2}\psi_{0},0) \, ds. $$
Firstly, we need to show that $\Gamma(w_{0})\in B(A,T)$. Under the assumption $\eqref{c1}$ and applying \eqref{st}, Lemma \ref{lemlwp1}, \eqref{gwpl2} and \eqref{gse1}, we obtain 
    \begin{align*}\left|\left| \displaystyle\int_{0}^{t} e^{i(t-s)\partial_x^2} |x|^{-b}G(v_{0},e^{is \partial_x^2}\psi_{0},0) \, d\tau \right|\right|_{Y(T)} 
   \lesssim_{\alpha,b} & \||v_{0}|^{\alpha-1}e^{is\partial_x^2}\psi_{0}\|_{L^{\gamma'_{1}}_{T}L^{\rho'_{1}}(B^{c})}    \nonumber\\  
   +  \|~| x|^{-b}|v_{0}|^{\alpha-1}e^{i\tau \partial_x^2}\psi_{0}\|_{L^{\gamma'_{2}}_{T}L^{\rho'_{2}}(B)}  
   +& \||e^{is \partial_x^2}\psi_{0} |^{\alpha} \|_{L^{\gamma'_{1}}_{T}L^{\rho'_{1}}(B^{c})} 
   + \|~|x|^{-b}|e^{is \partial_x^2}\psi_{0} |^{\alpha} \|_{L^{\gamma'_{2}}_{T}L^{\rho'_{2}}(B)} 
    \end{align*}
    \begin{align*}
   \lesssim_{\alpha,b}
   & \left(T^{1-\frac{\alpha-1}{4}-\frac{1}{2}\left(\frac{1}{p_{0}}-\frac{1}{2}\right)-\frac{1}{\nu}} \|v_{0}\|_{Y(T)}^{\alpha-1}\| e^{i\tau \partial_x^2}\psi_{0}\|_{X(T)}+T^{1-\frac{\alpha-1}{2p_{0}}-\frac{1}{2}\left(\frac{1}{p_{0}}-\frac{1}{2}\right)-\frac{1}{\nu}}\| e^{i\tau \partial_x^2}\psi_{0}\|^{\alpha}_{X(T)}\right)\\
   \lesssim_{\alpha,b}
   & T^{-\frac{1}{2}\left(\frac{1}{p_{0}}-\frac{1}{2}\right)} \| \psi_{0}\|_{\widehat{L}^{p_{0}}} \left\{ T^{1-\frac{\alpha-1}{4}-\frac{1}{\nu}}\|\phi_{0}\|_{L^2}^{\alpha-1}+T^{1-\frac{\alpha-1}{2p_{0}}-\frac{1}{\nu}}\| \psi_{0}\|_{\widehat{L}^{p_{0}}}^{\alpha-1} 
  \right \}\\
    \lesssim_{\alpha,b}
    & T^{-\frac{1}{2}\left(\frac{1}{p_{0}}-\frac{1}{2}\right)}  \| \psi_{0}\|_{\widehat{L}^{p_{0}}}.
\end{align*}
The last inequality follows due to our assumptions \eqref{c2} and \eqref{c3}.
This suggests the choice of
    \begin{equation}\label{AA}
    A= \frac{3}{C(\alpha,b)} T^{-\frac{1}{2}\left(\frac{1}{p_{0}}-\frac{1}{2}\right)}  \| \psi_{0}\|_{\widehat{L}^{p_{0}}}.
    \end{equation}
    where $C=C(\alpha,b)$ is the same constant as in \eqref{c2} and \eqref{c3}, chosen such that
    \begin{equation}\label{gammaw1}
      \left|\left| \displaystyle\int_{0}^{t} e^{i(t-s)\partial_x^2} |x|^{-b}G(v_{0},e^{is \partial_x^2}\psi_{0},0) \, ds\right|\right|_{Y(T)}\leq \frac{A}{3}.
    \end{equation}
Using \eqref{st}, Lemma \ref{lemlwp1},  \eqref{c1} and Remark \ref{chipAp}, we have
\begin{align*}
   \left\| \int_{0}^{t}  e^{i(t-s)\partial_x^2} |x|^{-b} G(v_{0} +  e^{is \partial_x^2}\psi_{0}, w_{0},0) \, ds \right\|_{Y(T)}  \lesssim_{\alpha,b} & \||v_{0}+ e^{is \partial_x^2}\psi_{0}|^{\alpha-1}w_{0}\|_{L^{\gamma'_{1}}_{T}L^{\rho'_{1}}(B^{c})}    \nonumber\\  
   +  \|~|x|^{-b}|v_{0}+ e^{i\tau \partial_x^2}\psi_{0}|^{\alpha-1}w_{0}\|_{L^{\gamma'_{2}}_{T}L^{\rho'_{2}}(B)}  
   +& \|| w_{0} |^{\alpha} \|_{L^{\gamma'_{1}}_{T}L^{\rho'_{1}}(B^{c})} 
   + \|~|x|^{-b}|w_{0} |^{\alpha} \|_{L^{\gamma'_{2}}_{T}L^{\rho'_{2}}(B)} 
    \end{align*}
\begin{align*}
     \lesssim_{\alpha,b}& \left( T^{1-\frac{\alpha-1}{2p_{0}}-\frac{1}{\nu}} \|v_{0}+ e^{i\tau \partial_x^2}\psi_{0}\|_{X(T)}^{\alpha-1}+  T^{1-\frac{\alpha-1}{4}-\frac{1}{\nu}}\| w_{0}\|^{\alpha-1}_{Y(T)}\right)
 \| w_{0}\|_{Y(T)}\\
   \lesssim_{\alpha,b}&\left\{ 
  T^{1-\frac{\alpha-1}{2p_{0}}-\frac{1}{\nu}}\left( \|\phi_{0}\|_{L^2}+\| \psi_{0}\|_{\widehat{L}^{p_{0}}}\right)^{\alpha-1}+  T^{1-\frac{\alpha-1}{2p_{0}}-\frac{1}{\nu}}\| \psi_{0}\|_{\widehat{L}^{p_{0}}}^{\alpha-1}\right \} \|w_{0}\|_{Y(T)}\\
    & \lesssim_{\alpha,b}\|w_{0}\|_{Y(T)} \left\{\frac{1}{3} +\frac{1}{3} \right\}.
\end{align*}
To get the last second inequality, we have used \eqref{gwpl2} and \eqref{gse1} in the first summand, and substituted the norm of $w_{0}$ in $Y(T)$ by $A$, ($A$ given in \eqref{AA}) in the second summand.  The last inequality follows due to \eqref{c2} (choosing $C$ in \eqref{c2} small enough). 
Finally,
        \begin{equation}\label{gamma2}
            \left\| \int_{0}^{t} e^{i(t-\tau)\partial_x^2} |x|^{-b} G(v_{0} + e^{i\tau \partial_x^2}\psi_{0}, w_{0},0)\, d\tau \right\|_{Y(T)} \leq \frac{2A}{3}.
        \end{equation}
        Combining \eqref{gammaw1} and \eqref{gamma2}, we can say that $\Gamma(w_{0})$ belongs to $B(A, T).$ The contractivity of $\Gamma$ follows similarly. Thus, by the Banach fixed-point theorem, we get a unique fixed point $w_{0}$ to the integral equation \eqref{w01} on the time interval $[0,T]$.
\end{proof}  
Consequently, we have a local solution $ w_{0} \in L^{q(r)}_{T}L^{r}$ on the time interval $[0,T]$. Thus, based on \eqref{gwpl2}, Proposition \ref{w0exist}, \eqref{spfls} and \eqref{gse1},  we conclude $$v_0 + w_0 \in  L^{q(r)}_{T}L^{r} \cap L^{\infty}_{T}L^{2}$$ and 
$$e^{i t \partial_x^2} \psi_0 \in L^{\infty}_{T}\widehat{L}^{p_{0}} \cap L^{Q_{p_{0}}(r)}_{T}L^{r}  .$$
Next, we claim that the local solution  $u(t),\;t\in [0,T]$ obtained in the previous step can be extended to the interval $[T,2T]$. To do so, we need to control the Duhamel term, as shown in the next corollary.

 \begin{cor}\label{winfty2}
 Under the hypothesis of Proposition \ref{w0exist}, there exists a constant $C(\alpha,b)$ satisfying
    \begin{equation*}
\|w_{0}\|_{L^{\infty}_{T} L^{2}} 
\lesssim_{\alpha,b} T^{-\frac{1}{2}\left(\frac{1}{p_{0}}-\frac{1}{2}\right)} \| \psi_{0}\|_{\widehat{L}^{p_{0}}}.
    \end{equation*}

 \end{cor}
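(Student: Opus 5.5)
Since $w_0$ solves the integral equation \eqref{w01}, we have $w_0=\Gamma(w_0)$, so it suffices to bound the $L^\infty_TL^2$ norm of the two Duhamel integrals defining $\Gamma(w_0)$. The tool is the Strichartz estimate \eqref{st}, now used with the admissible pair $(\infty,2)\in\mathcal{A}$ on the left-hand side. This gives
\begin{equation*}
\|w_0\|_{L^\infty_TL^2}\lesssim \inf_{(\gamma,\rho)\in\mathcal{A}}\big\||x|^{-b}G(v_0+e^{is\partial_x^2}\psi_0,w_0,0)\big\|_{L^{\gamma'}_TL^{\rho'}}+\inf_{(\gamma,\rho)\in\mathcal{A}}\big\||x|^{-b}G(v_0,e^{is\partial_x^2}\psi_0,0)\big\|_{L^{\gamma'}_TL^{\rho'}} .
\end{equation*}
The right-hand side is exactly the dual Strichartz norm already estimated in the proof of Proposition \ref{w0exist}, where the $Y(T)$ bound was obtained from the same dual norm. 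The point is that \eqref{st} yields the same control for every admissible output pair, including $(\infty,2)$.

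\emph{Step 1: the second integral.} I would split $\R=B\cup B^c$ and use the pairs $(\gamma_1,\rho_1)$ on $B^c$ and $(\gamma_2,\rho_2)$ on $B$ from Lemma \ref{lemlwp1}. Combined with the pointwise bound \eqref{eg}, the global $L^2$ bound \eqref{gwpl2} for $v_0$, and \eqref{gse1} for $e^{is\partial_x^2}\psi_0$, this reproduces the chain of inequalities leading to \eqref{gammaw1}. Under \eqref{c1}--\eqref{c3} the result is a bound $\lesssim T^{-\frac12(\frac1{p_0}-\frac12)}\|\psi_0\|_{\widehat{L}^{p_0}}$, i.e.\ $\lesssim A$.

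\emph{Step 2: the first integral.} The same splitting and Lemma \ref{lemlwp1}, mixing $X(T)$ and $Y(T)$ norms via Remark \ref{chipAp}, give a bound
\begin{equation*}
\Big\{T^{1-\frac{\alpha-1}{2p_0}-\frac1\nu}\big(\|\phi_0\|_{L^2}+\|\psi_0\|_{\widehat{L}^{p_0}}\big)^{\alpha-1}+T^{1-\frac{\alpha-1}{4}-\frac1\nu}\|w_0\|_{Y(T)}^{\alpha-1}\Big\}\|w_0\|_{Y(T)} .
\end{equation*}
Since $w_0\in B(A,T)$ and $T$ obeys \eqref{c2}--\eqref{c3}, the bracket is at most a small constant. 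The term is therefore $\lesssim A\approx T^{-\frac12(\frac1{p_0}-\frac12)}\|\psi_0\|_{\widehat{L}^{p_0}}$. Adding Steps 1 and 2 gives the corollary.

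\emph{Main obstacle.} The main obstacle is the second summand in Step 2, the one involving $\|w_0\|^{\alpha-1}_{Y(T)}$. There we must verify that $T^{1-\frac{\alpha-1}{4}-\frac1\nu}A^{\alpha-1}$ is bounded. Substituting $A\approx T^{-\frac12(\frac1{p_0}-\frac12)}\|\psi_0\|_{\widehat{L}^{p_0}}$ turns this into
\begin{equation*}
T^{1-\frac{\alpha-1}{4}-\frac{\alpha-1}{2}(\frac1{p_0}-\frac12)-\frac1\nu}\,\|\psi_0\|_{\widehat{L}^{p_0}}^{\alpha-1}=T^{1-\frac{\alpha-1}{2p_0}-\frac1\nu}\,\|\psi_0\|_{\widehat{L}^{p_0}}^{\alpha-1},
\end{equation*}
and this is $\lesssim 1$ by \eqref{c3}. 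So the smallness is inherited from the proof of Proposition \ref{w0exist}, and no new estimate is needed beyond observing that \eqref{st} allows the $L^\infty_TL^2$ endpoint.
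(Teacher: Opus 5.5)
Your proposal is correct and follows the paper's own argument. You apply \eqref{st} with the admissible pair $(\infty,2)$ on the left-hand side and reuse, unchanged, the dual Strichartz bounds behind \eqref{gammaw1} and \eqref{gamma2}, which together give at most $A\approx T^{-\frac{1}{2}(\frac{1}{p_{0}}-\frac{1}{2})}\|\psi_{0}\|_{\widehat{L}^{p_{0}}}$; your check that $T^{1-\frac{\alpha-1}{4}-\frac{1}{\nu}}A^{\alpha-1}\approx T^{1-\frac{\alpha-1}{2p_{0}}-\frac{1}{\nu}}\|\psi_{0}\|_{\widehat{L}^{p_{0}}}^{\alpha-1}$ is controlled by \eqref{c3} is the same step the paper uses inside the proof of Proposition \ref{w0exist}.
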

\begin{proof} The proof follows from the
Strichartz estimates (Proposition \ref{st}) by replacing the norm defined with respect to $Y(T)$ by  $L^{\infty}_{T}L^{2}$. Using an admissible pair $ (\infty,2)$ on the left-hand side  
and the same pairs on the right-hand side of \eqref{gammaw1} and \eqref{gamma2} in the proof of Proposition \ref{w0exist}, we obtain
\begin{align*}
\|w_{0}\|_{L^{\infty}_{T}L^{2}} \leq & \left|\left|\int_{0}^{t} e^{i(t-\tau)\partial_x^2}|x|^{-b} G(v_{0} + e^{i\tau\partial_x^2}\psi_{0}, w_{0},0) \, d\tau \right|\right|_{L^{\infty}_{T}L^{2}}\\
&+\left|\left|\int_{0}^{t} e^{i(t-\tau)\partial_x^2} |x|^{-b} G(v_{0}, e^{i\tau \partial_x^2}\psi_{0},0) \, d\tau \right|\right|_{L^{\infty}_{T}L^{2}}\\
&\lesssim_{\alpha,b} T^{-\frac{1}{2}(\frac{1}{p_{0}}-\frac{1}{2})}\|\psi_{0}\|_{\widehat{L}^{p_{0}}}.
\end{align*} 
This completes the proof of Corollary \ref{winfty2}.
\end{proof}
Denote the constant from Proposition \ref{w0exist} by $C=C(\alpha,b)$ and put
\begin{equation}\label{TN}
        T=T(N)=(3CN^{\beta_{1}})^{-\frac{\alpha-1}{1-\frac{\alpha-1}{4}-\frac{1}{\nu}} }.
    \end{equation}
Using  Proposition \ref{w0exist} for $\phi=\phi_{0}$ and $\psi=\psi_{0}$,  the solution of \eqref{INLS} exists in the interval $[0, T(N)]$ and it is of the form
\[u=v_0 + e^{i t \partial_x^2} \psi_0 + w_0,\]
as given in \eqref{solnlocal}. We wish to extend our solution to the interval $[T(N), 2T(N)]$ by a similar procedure, but with the new initial data as the sum of the following two functions:    
\[ \phi_{1}=v_{0}(T)+w_{0}(T) \quad \text{and} \quad \psi_{1} =e^{i t \partial_x^2}\psi_{0}.\]
More generally, we 
 wish to  extend our solution further by considering the following iterative procedure :
\begin{itemize}
    \item[--]We define $\phi_{k}$ and $\psi_k$ for $k \geq 1$ (for $k=0,\; \phi_{0}$ and $\psi_0$ are defined in \eqref{decomposition}) as follows:
    \begin{equation}\label{newidk}
 \phi_{k}= v_{k-1}(kT) + w_{k-1}(kT) \quad \text{and} \quad \psi_{k}= e^{ikT\partial_x^2}\psi_{0}, 
\end{equation}
where
\begin{eqnarray}
    \begin{aligned}
     w_{k-1}(kT)=i \mu \displaystyle \int_{0}^{kT} e^{i(kT-s)\partial_x^2} |x|^{-b}G(v_{k-1}+e^{iks \partial_x^2}\psi_{0},w_{k-1}) ds \nonumber \\
   + i \mu \displaystyle \int_{0}^{kT} e^{i(kT-s)\partial_x^2} |x|^{-b}G(v_{k-1},e^{iks \partial_x^2}\psi_{0}) ds.
  \label{wKT}
    \end{aligned}
       \end{eqnarray}
       \item[--] Let $T^* \leq \infty$ denote the maximal time of existence. Assume for $kT\leq T^*\footnote{Note that $(K-1)T\leq T^*.$ Since $T\leq 1,$ we will have $KT\leq T^{*}+1.$}, \phi=\phi_{k}$ and $\psi=e^{ikT\partial_x^2}\psi_{0}$, $T$ satisfy all three conditions \eqref{c1}, \eqref{c2} and \eqref{c3} of Proposition \ref{w0exist}, where $k \in \{0, 1,\cdots, K-1\}.$
       \item[--]Let $v_{k}$ be INLS evolution of  $\phi_k$, and by construction 
\begin{equation}\label{ss}
   u(\cdot, t)= v_{k} (\cdot, t-kT ) + w_{k}(\cdot, t-kT) + e^{it \partial_x^2} \psi_0,  \quad \text{if} \ t\in [kT, (k+1)T] 
\end{equation}
 defines a solution of \eqref{INLS} for $k \in \{0, 1,\cdots, K-1\}$.  
 \end{itemize}
 Suppose, to the contrary, that for $u_0 \in \widehat{L}^{p}$,  the solution is not global in time. Therefore, we have the maximal time $T^* < \infty$. In this case,  we shall produce a solution $u$  of \eqref{INLS} (as defined in \eqref{ss}), which will exist on a larger interval $[0, T_1]$ for $T_1> T^*.$ This will lead to a contradiction to the maximal time interval $[0, T^*).$\\ We shall show that the iterative process ends with  $KT >T^*.$ Since  $v_{K}$ and $e^{i t \partial_x^2} \psi_0$ are globally defined in appropriate spaces, we are left to handle the nonlinear interaction term $w_{K}$ to extend the solution at the $K$th iteration. To do this, we shall use Proposition \ref{w0exist} with $\phi=\phi_{K}$ and $\psi=e^{iKT\partial_x^2}\psi_{0}.$  

Note that $T=T(N)\to 0$ as $N\to \infty$ (see \eqref{TN}), and so 
the  smallness condition \eqref{c1} is satisfied independently of $k$ for large $N.$\\
 Using \eqref{spfls} and  \eqref{decomposition}, we have
\begin{align}\label{ref1} 
\|e^{i t \partial_x^2}\psi_{0}\|_{L^{\infty}([0,T^{*}+1],\widehat{L}^{p_{0}}) } =\|\psi_{0}\|_{\widehat{L}^{p_{0}}} \lesssim \frac{1}{N} \xrightarrow{N \to \infty} 0.
    \end{align}
Inserting $e^{ikT\partial_x^2}\psi_{0}$ in the right hand side of \eqref{c3}, we have
\begin{eqnarray*}
\begin{aligned}
    \left( \|e^{ikT\partial_x^2}\psi_{0}\|_{\widehat{L}^{p_{0}}} \right)^{-\frac{\alpha-1}{1-\frac{\alpha-1}{2p_{0}}-\frac{1}{\nu}}} 
    \gtrsim_{n} N^{\frac{\alpha-1}{1-\frac{\alpha-1}{2p_{0}}-\frac{1}{\nu}}} \xrightarrow{N \to \infty} \infty.
   \end{aligned}
\end{eqnarray*}  
Since the lower bound is independent of $k$ and $T \xrightarrow{N \to \infty} 0,$  condition \eqref{c3} holds for sufficiently large $N.$ \\

\noindent 
Thus, we either have $KT> T^*$ or condition \eqref{c2} fails in the last iterative step $k=K,$ i.e.
\begin{equation}\label{aim11}
    3CN^{\beta_{1}}< \|\phi_{K}\|_{L^2}+ \|e^{iKT\partial_x^2}\psi_{0}\|_{\widehat{L}^{p_{0}}}.
\end{equation} 
Considering \eqref{ref1}, \eqref{aim11} can be written as
\begin{equation}\label{aim}
    3CN^{\beta_{1}}< \|\phi_{K}\|_{L^2}+CN^{\beta_{1}}.
\end{equation}
We claim that even under condition \eqref{aim},   we will have $KT>T^*$. This clearly leads to a contradiction to the definition of $ T^*.$ \\
\noindent
In view of the construction of $\phi_k$ and Corollary \ref{winfty2}, we note that $\phi_k \in L^2$ for $k\in \{0, 1,\cdots, K-1\}.$ Now exploiting the conservation of mass for \eqref{INLS} and Corollary \ref{winfty2} (for  $w=w_{k}$ and $\psi=e^{ikT\partial_x^2}\psi_{0}, 0\leq k \leq K-1$), we have
    \begin{align}
        \|\phi_{K}\|_{L^{2}}  &\leq \|v_{K-1}\|_{L^{\infty}_{[(K-1)T,KT]}L^{2}}+\|w_{K-1}\|_{L^{\infty}_{[(K-1)T,KT]}L^{2}} \nonumber\\
        &= \|\phi_{K-1}\|_{L^2} + \|w_{K-1}\|_{L^{\infty}_{[(K-1)T,KT]}L^2}\nonumber\\
        & \leq \|v_{K-2}\|_{L^{\infty}_{[(K-2)T,(K-1)T]}L^{2}}+\|w_{K-2}\|_{L^{\infty}_{[(K-2)T,(K-1)T]}L^{2}} +\|w_{K-1}\|_{L^{\infty}_{[(K-1)T,KT]}L^{2}} \nonumber\\
        &= \|\phi_{K-2}\|_{L^2}+\|w_{K-2}\|_{L^{\infty}_{[(K-2)T,(K-1)T]}L^{2}} +\|w_{K-1}\|_{L^{\infty}_{[(K-1)T,KT]}L^{2}} \nonumber\\
        & \leq \cdots \leq  \|\phi_{0}\|_{L^{2}}+\sum_{k=0}^{K-1}\|w_{k}\|_{L^{\infty}_{[kT,(k+1)T]}L^{2}} \nonumber \\
        & \lesssim_{\alpha,b} CN^{\beta_{1}}+T^{-\frac{1}{2}(\frac{1}{p_{0}}-\frac{1}{2})}\sum_{k=0}^{K-1}\|e^{ikT\partial_x^2}\psi_{0}\|_{\widehat{L}^{p_{0}}}\nonumber\\
        &\lesssim_{T^{*}} CN^{\beta_{1}}+T^{-\frac{1}{2}(\frac{1}{p_{0}}-\frac{1}{2})}K\frac{C}{N}\label{secondest}.
    \end{align}
    In the last two inequalities, we have used \eqref{decomposition} and \eqref{ref1}.
   Thus, using \eqref{TN}, \eqref{aim} can be expressed as
 \begin{align}
     KT &\gtrsim_{\alpha,b,T^*} N^{1+\beta_{1}}T^{1+\frac{1}{2}\left(\frac{1}{p_{0}}-\frac{1}{2}\right)} \approx N^{1+\beta_{1} \left(1-\frac{(\alpha-1)\left(1+\frac{1}{2}\left(\frac{1}{p_{0}}-\frac{1}{2}\right)\right)}{1-\frac{\alpha-1}{4}-\frac{1}{\nu}}\right)}\nonumber\\
    &\label{Npower}=N^{1-\beta_{1} \left(-1+\frac{(\alpha-1)\left(1+\frac{1}{2}\left(\frac{1}{p_{0}}-\frac{1}{2}\right)\right)}{1-\frac{\alpha-1}{4}-\frac{1}{\nu}}\right)}.
\end{align}
Note that $N$ can be taken arbitrarily large. For any $\beta$ satisfying 
\begin{align}\label{betarange}
0 < \beta_{1} <
\begin{cases}
 \eta \quad &\text{if}\quad  \alpha-2 +\frac{\alpha-1}{2p_{0}}+\frac{1}{\nu}>0\\
\infty \quad &\text{otherwise} 
\end{cases}
\end{align}
where
$$\eta =\frac{1-\frac{\alpha-1}{4}-\frac{1}{\nu}}
{ \alpha-2 +\frac{\alpha-1}{2p_{0}}+\frac{1}{\nu}},$$
the exponent of $N$ is positive in \eqref{Npower}, we get $KT>T^*$. This concludes the proof of Theorem \ref{mr}.  
\begin{Remark}\label{etap}Recall $\beta_{1}$ defined in terms of $p$ and $p_{0}$ in  \eqref{betavalue}. The range of $\beta_{1}$ in \eqref{betarange} in turn decides the range of $p.$ 
    Note that when $$ \beta_{1}=\eta=\frac{1-\frac{\alpha-1}{4}-\frac{1}{\nu}}
{ \alpha-2 +\frac{\alpha-1}{2p_{0}}+\frac{1}{\nu}},$$ we get the lower bound of the admissible range of $p$, denoted by $p_{\min}$,
$$p_{\min}=\frac{(\alpha-1)(3p_{0}+2)}
{4 +2(\alpha-2)p_{0}-\frac{2}{\nu}(2-p_{0})}.$$
Thus, we have $p \in (p_{\min},2)$ and $p_{\min}$ explicitly given as:
     \begin{align}\label{pmin2}p_{\min}=
         \begin{cases}
             \frac{(\alpha-1)(3p_{0}+2)}
{4 +2(\alpha-2)p_{0}-\frac{2}{\nu}(2-p_{0})} &\text{if}\quad \beta_{1} = \eta \\
             p_{0} &\text{if}\quad \beta_{1}=\infty. \\
         \end{cases}
     \end{align}
      This justifies the admissible range of \(p\)  in Theorem \ref{mr}.
\end{Remark}
\begin{Remark}\label{detailedpmin}
    To this end, when $\alpha \in (1,3)$ and $\frac{\alpha+1}{\alpha}>\frac{4\alpha}{9-4b}$, we have
\begin{equation}\label{pminglobal}p_{\min} := 
\begin{cases}
 \frac{5\alpha+3}{2(\alpha+2)-\frac{2}{\nu}}      \quad &\text{if} \  \alpha-2 +\frac{(\alpha-1)\alpha}{2(\alpha+1)}+\frac{1}{\nu}>0\\ \frac{\alpha+1}{\alpha} \quad  & \text{otherwise},
 \end{cases}
 \end{equation}
 and when $\alpha \in (1,3)$ and $\frac{\alpha+1}{\alpha}<\frac{4\alpha}{9-4b}$, we have
 \begin{equation}\label{pminglobal3}p_{\min} := 
     \begin{cases}
 \frac{6\alpha+9-4b}{4\alpha+\frac{2(9-4b-2\alpha)}{\alpha-1}(1-\frac{1}{\nu})}    
 \quad &\text{if} \  \alpha-2 +\frac{(\alpha-1)(9-4b)}{8\alpha}+\frac{1}{\nu}>0 \\
 \frac{4\alpha}{9-4b} \quad  & \text{otherwise}.
 \end{cases}
\end{equation}\\
\par{For $\alpha \in [3,5-2b)$ and $\frac{2(\alpha-1)}{\alpha}>\frac{2\alpha}{5-2b}$, we have }
 \begin{equation}\label{pminglobal2}p_{\min}  := 
 \begin{cases}
\frac{4\alpha-3}{2(\alpha-1)+\frac{2}{\alpha-1}(1-\frac{1}{\nu})}   \quad &\text{if} \  \alpha-2 +\frac{\alpha}{4}+\frac{1}{\nu}>0
\vspace{0.3cm}
\\ \frac{2(\alpha-1)}{\alpha} \quad  & \text{otherwise,}
\end{cases}
\end{equation}
and when $\alpha \in [3,5-2b)$ and  $\frac{2(\alpha-1)}{\alpha}<\frac{2\alpha}{5-2b} $, we have
\begin{equation}\label{pminglobal4}p_{\min}  := 
 \begin{cases}
\frac{3\alpha+5-2b}{2\alpha+2(\frac{5-2b-\alpha}{\alpha-1})(1-\frac{1}{\nu})}   \quad &\text{if} \  \alpha-2 +\frac{(5-2b)(\alpha-1)}{4\alpha}+\frac{1}{\nu}>0
\\ \frac{2\alpha}{5-2b} \quad  & \text{otherwise}.
\end{cases}
\end{equation}
\end{Remark}
\end{proof}
 \begin{proof}[\textbf{Proof of Corollary \ref{mr}}]
 In the case of modulation spaces, the proof goes
along the same lines as above. Let $u_{0} \in M^{p,p'}.$ Using \eqref{ipt}, $u_{0}$ can be decomposed into $\phi_{0}\in L^2$ and $\psi_{0} \in M^{p_{0},p_{0}'}$. The solution $u_{0}$ corresponding to $\phi_{0}\in L^2$ is given as \eqref{gwpl2}. While the nonlinear part corresponding to $\psi_{0}$, $w_{0}\in Y(T)$, as claimed in Proposition \ref{w0exist}. 
Due to \eqref{Mpp1}, \eqref{gse1} and \eqref{embdModFLS},
the linear part corresponding to $\psi_{0}$ lies in
$$ e^{i t \partial_x^2} \psi_0 \in L^{\infty}_{T}M^{p_{0},p_{0}'} \cap L^{Q_{p_{0}}(r)}_{T}L^{r} .$$
Thus, the solution corresponding to $u_{0}$ to \eqref{INLS} lies in 
$$\{L^{\infty}_{T}L^2 \cap L^{q(r)}_{T}L^{r}\}+ \{L^{\infty}_{T}M^{p_{0},p_{0}'} \cap L^{Q_{p_{0}}(r)}_{T}L^{r}\}.$$
The rest follows similarly to the proof of Theorem \ref{mr2}.
\end{proof}
\section{Well-posedness in $\widehat{L}^{p}$ for $p>2$}\label{s5}
Consider the Banach space $X'(T)$ expressed as 
\begin{equation}
\begin{aligned}\label{X1(T)}
        X'(T)&:= X_{1}(T)+X_{2}(T)
\end{aligned}
\end{equation}
where $$X_1(T):=L^{\infty}_{T}L^2 ~\cap~ L^{q(r)}_{T} L^{r}$$
equipped with the norm 
$$\|v\|_{X_1(T)}=\max \left\{\|v\|_{L^{\infty}_{T}L^2},\|v\|_{L^{q(r)}_{T}L^{r}}\right\} $$ and $$ X_2(T):=L^{\infty}_{T}\widehat{L}^{r}.$$\\
The norm on $X'(T)$ is given as 
\begin{equation*}
\|u\|_{X'(T)} =\inf_{\substack{u=v+w \\ v \in X_1(T) \\ w \in X_2(T)}} \left(\|v\|_{X_1(T)} + \|w\|_{X_2(T)} \right).
\end{equation*}
Denote
\begin{equation}\label{Y1(T)}
Y_{1}(T):=L^{q(r)}_{T}L^{r}
    \end{equation}

\begin{lem}\label{lemlwp2}
    Let $1<\alpha<5-2b$ and $ 0<b\leq 1/4 .$
Then
$$\inf_{(\gamma,\rho)\in \mathcal{A}}\| ~|x|^{-b}|u|^{\alpha-1}v\|_{L^{\gamma'}_{T}L^{\rho '}} \lesssim  ( T^{\frac{5-\alpha}{4}}+ T^{\frac{5-\alpha}{4}-\frac{1}{\nu}})\|u\|_{Y_{1}(T)}^{\alpha-1}\|v\|_{Y_{1}(T)}.$$
\end{lem}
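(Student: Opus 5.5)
We follow the proof of Lemma \ref{lemlwp1} verbatim, replacing the generalised Strichartz space $X(T)=L^{Q_{p_0}(r)}_TL^r$ by the Strichartz space $Y_1(T)=L^{q(r)}_TL^r$. Only the time exponents change, since $1/q(r)=\frac14-\frac1{2r}$ replaces $1/Q_{p_0}(r)=\frac1{2p_0}-\frac1{2r}$. Formally this is the substitution $p_0=2$ in Lemma \ref{lemlwp1}.

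First split $\R=B\cup B^c$. The infimum over $\mathcal A$ is bounded by $A_1+A_2$, where a separate admissible pair is used on each region.

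\textbf{The term on $B^c$.} Use $|x|^{-b}\le 1$ and the same pair $(\gamma_1,\rho_1)$ as before, so that $\rho_1'=r/\alpha$ and $\frac1{\gamma_1'}=\frac54-\frac{\alpha}{2r}$. H\"older in $x$ gives $\||u|^{\alpha-1}v\|_{L^{\rho_1'}}\le \|u\|_{L^r}^{\alpha-1}\|v\|_{L^r}$. H\"older in $t$ then gives the power $T^{1/\theta}$ with
\[
\frac1\theta=\frac54-\frac{\alpha}{2r}-\frac{\alpha}{q(r)}=\frac54-\frac{\alpha}{4}=\frac{5-\alpha}{4},
\]
which is positive because $\alpha<5$.

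\textbf{The term on $B$.} Keep the pairs $(\gamma_2,\rho_2)$ chosen in Case 1 and Case 2 of Lemma \ref{lemlwp1}: $\rho_2'=\frac{4}{4b+3}$ when $r=\alpha+1$, and $\rho_2'=\frac{4(2-b)}{5+6b-4b^2}$ when $r=2(\alpha-1)$. The spatial H\"older step \eqref{HXV} together with $\||x|^{-b}\|_{L^\nu(B)}<\infty$ (guaranteed by $1/\nu>b$, which was verified there for $b\le 1/4$) is independent of the time norm and carries over unchanged. The time exponent becomes
\[
\frac1{q_1}=\frac1{\gamma_2'}-\frac{\alpha}{q(r)}.
\]
Redoing the arithmetic of \eqref{q11}–\eqref{q12} with $\frac{\alpha}{2p_0}$ replaced by $\frac{\alpha}{4}$ gives
\[
\frac1{q_1}=\frac{5-\alpha}{4}-\frac1\nu,
\]
with $\nu$ as in \eqref{nu}.

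\textbf{Main obstacle.} The delicate step is checking that $\frac{5-\alpha}{4}-\frac1\nu>0$ for all $\alpha\in(1,5-2b)$ and $b\in(0,1/4]$, so that \eqref{tpower} holds. We do this directly from \eqref{nu}. In Case 1 the condition reads $\frac{5-\alpha}{4}>\frac38-\frac{\alpha}{2(\alpha+1)}+\frac b2$; at $\alpha=3$ the left side is $\frac12$ and the right side is $\frac b2$, and the inequality is monotone in $\alpha$. Case 2 is handled the same way near the endpoint $\alpha\to5-2b$, where the left side tends to $\frac b2$. That comparison needs care, and if it fails we would adjust $\rho_2'$ slightly within $[1,2]$.

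Summing the estimates for $A_1$ and $A_2$ yields the stated bound.
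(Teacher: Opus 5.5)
Your route is the paper's own: its proof of this lemma is the single line ``take $p_0=2$ in Lemma~\ref{lemlwp1}''. Your $B^c$ computation and your Case~1 are correct. The gap is the step you import from Case~2, ``$1/\nu>b$, which was verified there''. First, the $\nu$ defined before Theorem~\ref{mr2} is not the H\"older exponent in \eqref{HXV}. Write $1/\tilde\nu=1/\rho_2'-\alpha/r$ for the latter. The loss in the power of $T$ is then $\frac{1}{2\tilde\nu}$, and this is what is called $1/\nu$, so \eqref{SLB} reads $\frac1\nu>\frac b2$. In Case~1 this holds, since $\frac1\nu-\frac b2=\frac38-\frac{\alpha}{2(\alpha+1)}>0$ for $\alpha<3$. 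In Case~2, however, $1/\rho_2'=b+\frac{5-2b}{4(2-b)}$ and
\[
\frac1\nu-\frac b2=\frac{5-2b}{8(2-b)}-\frac{\alpha}{4(\alpha-1)}<0 \qquad \text{for every } \alpha\in[3,5-2b),
\]
because $\alpha/(\alpha-1)$ is decreasing and equality holds only at $\alpha=5-2b$. For instance, $b=\frac14$, $\alpha=4$, $r=6$ gives $1/\tilde\nu\approx 0.226<b$. Hence $|x|^{-b}\notin L^{\tilde\nu}(B)$ and the bound for $A_2$ breaks down. The error is in Lemma~\ref{lemlwp1}: the inequality $\alpha\bigl(\rho_2'(1+2b)-2\bigr)<2(b\rho_2'-1)$ is divided by $\rho_2'(1+2b)-2$, which is negative, without reversing it. As a result, the chosen $\rho_2'$ actually requires $\alpha>5-2b$.

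Adjusting $\rho_2'$, as your fallback suggests, cannot rescue the statement, because the claimed exponent is too large. Set $u_\lambda(x,t)=u(\lambda x,\lambda^2t)$ and $T=\lambda^{-2}$. Every norm $\|\,|x|^{-b}|u_\lambda|^{\alpha-1}u_\lambda\|_{L^{\gamma'}_TL^{\rho'}}$ with $(\gamma,\rho)\in\mathcal A$ equals $\lambda^{b-\frac52}$ times its value at $\lambda=1$, while $\|u_\lambda\|^{\alpha}_{Y_1(T)}=\lambda^{-\frac\alpha2}\|u\|^{\alpha}_{Y_1(1)}$. Letting $\lambda\to\infty$ shows that any bound $\lesssim T^{\kappa}\|u\|^{\alpha}_{Y_1(T)}$ valid for small $T$ forces $\kappa\le\frac{5-2b-\alpha}{4}$. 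In Case~2 both $\frac{5-\alpha}{4}$ and $\frac{5-\alpha}{4}-\frac1\nu$ exceed this, so the lemma as stated is false for $\alpha\in[3,5-2b)$.

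The point you flagged as the main obstacle, positivity of $\frac{5-\alpha}{4}-\frac1\nu$, actually holds: it decreases in $\alpha$ and vanishes at $5-2b$. The real obstruction is integrability. What the same H\"older scheme does give is the factor $T^{\frac{5-\alpha}{4}-\frac{1}{2\tilde\nu}}$ for any $1/\tilde\nu\in\bigl(b,\min\{1-\frac{\alpha}{r},\frac{5-\alpha}{2}\}\bigr)$. This is a positive power strictly below $\frac{5-2b-\alpha}{4}$, enough for a contraction argument but with modified exponents downstream (e.g.\ in $p_{\max}$). It needs $b<1-\frac{\alpha}{r}=\frac{\alpha-2}{2(\alpha-1)}$, which fails at $b=\frac14$, $\alpha=3$, $r=4$.

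A smaller point, shared with the paper: ``the infimum over $\mathcal A$ is bounded by $A_1+A_2$'' with different pairs on $B$ and $B^c$ is not literally true. What is true, and suffices, is that the inhomogeneous Strichartz estimate lets you place $F\chi_B$ and $F\chi_{B^c}$ in different dual norms.
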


\begin{proof}
   The result follows from taking $p_{0}=2$ in Lemma \ref{lemlwp1}. Also refer to \cite[Lemma 3.1]{Bhimani2024low}, where the authors considered $r=\alpha+1$ for $\alpha \in (1,5-2b)$ and $0<b<(3-\sqrt{7})/2(<1/4)$.
\end{proof}
\begin{proof}[\textbf{Proof of Theorem \ref{lwp3}}]Consider 
\begin{equation*}
    u(t)=e^{it\partial_x^2}u_{0}+i \mu \int_{0}^{t}e^{i(t-s)\partial_x^2}|x|^{-b}(|u|^{\alpha-1}u)(s)ds :=\Lambda(u)(t).
\end{equation*}
Let $a$ and $T$ be positive real numbers (to be chosen later). 
    Define $$B(a,T):=\{u\in X'(T):\|u\|_{X'(T)}\leq a\}.$$
We will show that $\Lambda$ 
    is a contraction map on $B(a,T).$  Recall $r$ from \eqref{rvalues}.
    Firstly, we consider the linear evolution of $u_{0}$ where $u_{0}=v_{0}+w_{0}\in L^{2}+ \widehat{L}^{r}, v_{0}\in L^{2}$ and $w_{0}\in \widehat{L}^{r}.$ Assume that $T\leq 1$. Using  \eqref{st} and \eqref{spfls}, we have
    \begin{align}
        \|e^{it\partial_x^2}u_{0}\|_{X'(T)}&\leq      \|e^{it\partial_x^2}v_{0}\|_{X_{1}(T)}  +      \|e^{it\partial_x^2}w_{0}\|_{X_{2}(T)}\nonumber\\
        &\lesssim_{\alpha}      \|v_{0}\|_{L^2}  +     \|w_{0}\|_{\widehat{L}^{r}}\nonumber\\
         &\label{linearu02}\lesssim      \|u_{0}\|_{L^2  +  \widehat{L}^{r}}.
    \end{align}
    This suggests the choice of
    $
    a=C(\alpha)\|u_{0}\|_{L^2  + \widehat{L}^{r}} .   $
    Using $X_{1}(T)\hookrightarrow X'(T)$, \eqref{st} (for $r\in \{\alpha+1,2(\alpha-1),2\}$) and Lemma \ref{lemlwp2}, for $u\in B(a,T)$, we have
    \begin{align}
    \left|\left| \int_{0}^{t} e^{i(t-\tau)\partial_x^2}|x|^{-b}(|u|^{\alpha-1}u)(\tau)d\tau \right|\right|_{X'(T)}
    &\lesssim \left|\left| \int_{0}^{t} e^{i(t-\tau)\partial_x^2}|x|^{-b}(|u|^{\alpha-1}u)(\tau)d\tau \right|\right|_{X_{1}(T)} \nonumber\\
   &\lesssim_{\alpha,b} \||u|^{\alpha-1}u\|_{L^{\gamma'_{1}}_{T}L^{\rho '_{1}}(B^{c})}+\|~|x|^{-b} |u|^{\alpha-1}u\|_{L^{\gamma'_{2}}_{T}L^{\rho '_{2}}(B)}\nonumber\\
   &\lesssim ( T^{\frac{5-\alpha}{4}-\frac{1}{\nu}})\|u\|_{Y_{1}(T)}^{\alpha}\nonumber\\
    &\label{ipGamma}\lesssim ( T^{\frac{5-\alpha}{4}-\frac{1}{\nu}})\|u\|_{X'(T)}^{\alpha}.
    \end{align}
Note that $X'(T) \hookrightarrow Y_{1}(T)$ since $X_{2}(T)  \hookrightarrow Y_{1}(T)$ due to Hausdorff-Young inequality \footnote{$\widehat{L}^{r}  \hookrightarrow {L^{r}},\; 2\leq r \leq \infty$.}.  
    Taking
    \begin{equation*}
        T :=\min \left\{ 1,~C(\alpha,b) \|u_{0}\|^{-\frac{\alpha-1}{\frac{5-\alpha}{4}-\frac{1}{\nu}}}_{L^2  + \widehat{L}^{r}} \right\}~,
    \end{equation*}
 we combine \eqref{linearu02} and \eqref{ipGamma} to conclude $\Lambda(u)\in B(a, T).$ Using \eqref{eg} and the previous argument, for $u,v \in B(a, T)$, we have
\begin{align}
\hspace{-4cm}\|\Lambda(u)-\Lambda(v)\|_{X'(T)} & \lesssim \left|\left| \int_{0}^{t} e^{i(t-\tau)\partial_x^2}|x|^{-b}G(0,u,v)(\tau)d\tau \right|\right|_{X_{1}(T)} \nonumber
\end{align}
\begin{align}
&\lesssim_{\alpha,b} \|(|u|^{\alpha}+|v|^{\alpha})|u-v|\|_{L^{\gamma'_{1}}_{T}L^{\rho '_{1}}(B^{c})}+\|~|x|^{-b}(|u|^{\alpha}+|v|^{\alpha})|u-v|\|_{L^{\gamma'_{2}}_{T}L^{\rho '_{2}}(B)} \nonumber \\
    &\lesssim ( T^{\frac{5-\alpha}{4}-\frac{1}{\nu}})(\|u\|_{X'(T)}^{\alpha-1}+\|v\|_{X'(T)}^{\alpha-1})\|u-v\|_{X'(T)}.
\end{align}
By the choice of $a$ and $T$ \footnote{In the expression of $a$ and $T, C(\alpha,b)$ is chosen sufficiently small to ensure that $\Lambda(u)$ is a contraction map on $B(a, T).$}, we have $$\|\Lambda(u)-\Lambda(v)\|_{X'(T)}\leq \frac{1}{2}\|u-v\|_{X'(T)}.$$
Thus, by the contraction mapping theorem,  we obtain a unique fixed point for $\Lambda$ which is a solution to \ref{INLS}.
\end{proof}
\begin{proof}[\textbf{Proof of Theorem \ref{gwp3}}] 
 We start by decomposing (using \eqref{ipt3}) initial data $u_0 \in \widehat{L}^{p} \subset L^2 + \widehat{L}^{r} $ into two parts. Specifically, for any $N>1$  and given $u_0 \in \widehat{L}^{p},$ there exists  $\phi_0 \in L^2, \psi_0 \in \widehat{L}^{r}$ (depending on $N$)   such that 
\begin{equation}\label{dp}
    u_0= \phi_0 + \psi_0
\end{equation}
with 
\begin{eqnarray}\label{asi}
\|\phi_0\|_{L^2} \lesssim N^{\beta_{2}},  \quad \|\psi_0\|_{\widehat{L}^{r}} \lesssim  \frac{1}{N}
\end{eqnarray}
where\begin{equation}\label{betap2}
    \beta_{2} = \frac{\frac{1}{2} - \frac{1}{p}}{\frac{1}{p} - \frac{1}{r}}.
\end{equation}
The initial value problem associated with $\phi_0$, given as \eqref{ivpL2}, admits a solution described in \eqref{gwpl2}.
Let the solution of the  I.V.P. corresponding to $\psi_{0}$ be given as
$$e^{it\partial_x^2}\psi_{0}+w_{0},$$ where
$w_{0}$ can be expressed as
\begin{align*}
     w_{0} 
    =i \mu \int_{0}^{t} e^{i(t-\tau)\partial_x^2}|x|^{-b} G(v_{0} + e^{i\tau\partial_x^2}\psi_{0}, w_{0},0) \, d\tau +  i \mu \int_{0}^{t} e^{i(t-\tau)\partial_x^2}|x|^{-b} G(v_{0}, e^{i\tau\partial_x^2}\psi_{0},0) \, d\tau.
\end{align*}
Also, using \eqref{spfls},
$$e^{it\partial_x^2}\psi_{0} \in L^{\infty}(\R,\widehat{L}^{r}).$$
\begin{prop}\label{w0exist3} Let $\phi_{0} \in L^2, \psi\in \widehat{L}^{r}$ and $X_{1}(T)$ be as in  \eqref{X1(T)}. Denote by  $v_{0}$  the $L^2-$global solution  (as in \eqref{gwpl2}) for initial value $\phi_{0}$.
Then  there exists  a constant $C=C(\alpha,b)>0$ such that  integral equation $$w_{0}= i \mu \int_{0}^{t} e^{i(t-\tau)\partial_x^2}|x|^{-b} G(v_{0} + e^{i\tau\partial_x^2}\psi_{0}, w_{0},0) \, d\tau +  i \mu \int_{0}^{t} e^{i(t-\tau)\partial_x^2}|x|^{-b} G(v_{0}, e^{i\tau\partial_x^2}\psi_{0},0) \, d\tau$$ has a unique solution  $w_{0} \in X_{1}(T)$   provided $T$ satisfying
\begin{align}
    T &\leq 1 \label{c1a}\\
    T &\leq C \left( \|\phi_{0}\|_{L^{2}} + \|\psi_{0}\|_{{\widehat{L}^{r}}} \right)^{-\frac{\alpha-1}{\frac{5-\alpha}{4} - \frac{1}{\nu}}} \label{c2a}\\
   T &\leq C \left( \|\psi_{0}\|_{\widehat{L}^{r}} \right)^{-\frac{\alpha-1}{\frac{5-\alpha}{4} - \frac{1}{\nu} + \frac{\alpha-1}{q(r)}}}\label{c3a}.
\end{align}
\end{prop}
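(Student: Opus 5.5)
The plan is to run a contraction argument for
\[
\Gamma(w_{0}):= i \mu \int_{0}^{t} e^{i(t-\tau)\partial_x^2}|x|^{-b} G(v_{0} + e^{i\tau\partial_x^2}\psi_{0}, w_{0},0)\, d\tau +  i \mu  \int_{0}^{t} e^{i(t-\tau)\partial_x^2}|x|^{-b} G(v_{0}, e^{i\tau\partial_x^2}\psi_{0},0) \, d\tau
\]
on a ball $B(A,T)=\{w_0\in X_1(T):\|w_0\|_{X_1(T)}\le A\}$. This mirrors Proposition \ref{w0exist}, except that the generalized space $X(T)$ is replaced by $Y_1(T)=L^{q(r)}_TL^r$, and Lemma \ref{lemlwp1} is replaced by Lemma \ref{lemlwp2}. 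Both components of the $X_1(T)$ norm ($L^\infty_TL^2$ and $L^{q(r)}_TL^r$) are controlled at once by the Strichartz estimate \eqref{st}, since $(\infty,2)$ and $(q(r),r)$ are admissible. On the right-hand side we use the dual pairs $(\gamma_1',\rho_1')$ on $B^c$ and $(\gamma_2',\rho_2')$ on $B$ from Lemma \ref{lemlwp1} with $p_0=2$. By \eqref{eg}, every term reduces to $\||x|^{-b}|f|^{\alpha-1}g\|$ with $f,g\in\{v_0,e^{it\partial_x^2}\psi_0,w_0\}$. Assuming \eqref{c1a}, $T\le1$, the factor $T^{\frac{5-\alpha}{4}}$ is dominated by $T^{\frac{5-\alpha}{4}-\frac1\nu}$.

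The norm inputs are as follows. From \eqref{gwpl2}, $\|v_0\|_{Y_1(T)}\lesssim\|\phi_0\|_{L^2}$. For the free evolution of $\psi_0$ we cannot use Strichartz, since $\psi_0\notin L^2$. Instead we use Hausdorff--Young together with \eqref{spfls}:
\[
\|e^{it\partial_x^2}\psi_0\|_{L^r}\le\|e^{it\partial_x^2}\psi_0\|_{\widehat L^r}=\|\psi_0\|_{\widehat L^r},
\]
which integrates in time to $\|e^{it\partial_x^2}\psi_0\|_{Y_1(T)}\le T^{1/q(r)}\|\psi_0\|_{\widehat L^r}$. This extra power $T^{1/q(r)}$ per factor of $\psi_0$ is exactly why \eqref{c3a} carries the additional term $\frac{\alpha-1}{q(r)}$ in its exponent.

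The forcing term $\Gamma(0)$ is bounded by
\[
T^{\frac{5-\alpha}{4}-\frac1\nu}\,T^{\frac1{q(r)}}\|\psi_0\|_{\widehat L^r}\Big(\|\phi_0\|_{L^2}^{\alpha-1}+T^{\frac{\alpha-1}{q(r)}}\|\psi_0\|_{\widehat L^r}^{\alpha-1}\Big).
\]
Conditions \eqref{c2a} and \eqref{c3a} make the bracket times $T^{\frac{5-\alpha}{4}-\frac1\nu}$ at most a small constant, so $\|\Gamma(0)\|_{X_1(T)}\le A/3$ with $A:=\frac3C T^{1/q(r)}\|\psi_0\|_{\widehat L^r}$, the analogue of \eqref{AA}. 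For the linear-in-$w_0$ term we get
\[
T^{\frac{5-\alpha}{4}-\frac1\nu}\Big(\|v_0\|_{Y_1}+\|e^{it\partial_x^2}\psi_0\|_{Y_1}+\|w_0\|_{Y_1}\Big)^{\alpha-1}\|w_0\|_{Y_1}.
\]
Inserting $\|w_0\|_{Y_1}\le A$ and invoking \eqref{c2a}--\eqref{c3a} again, with $C$ small, gives at most $\frac{2}{3}A$. Hence $\Gamma$ maps $B(A,T)$ into itself.

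For contractivity, apply \eqref{eg} to $G(v_0+e^{i\tau\partial_x^2}\psi_0,w,0)-G(v_0+e^{i\tau\partial_x^2}\psi_0,\tilde w,0)$. The same bounds then yield
\[
\|\Gamma(w)-\Gamma(\tilde w)\|_{X_1(T)}\le\tfrac12\|w-\tilde w\|_{X_1(T)},
\]
and Banach's fixed-point theorem gives existence and uniqueness in $B(A,T)$. Uniqueness in all of $X_1(T)$ then follows by the usual continuity argument on shorter intervals.

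The main obstacle is the bookkeeping of the $T$-powers. One has to check that after using $T\le1$ to drop the $T^{\frac{5-\alpha}{4}}$ term from Lemma \ref{lemlwp2}, the mixed terms $|v_0|^{\alpha-1-j}|e^{it\partial_x^2}\psi_0|^{j}$ are each controlled by a combination of \eqref{c2a} and \eqref{c3a}. I would handle this by interpolating: each mixed term is bounded by the maximum of the two pure terms, $\|\phi_0\|^{\alpha-1}$ and $(T^{1/q(r)}\|\psi_0\|)^{\alpha-1}$, which are covered by \eqref{c2a} and \eqref{c3a} respectively.
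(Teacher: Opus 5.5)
Your proposal is correct and is essentially the paper's own proof. The ingredients match one for one:
\begin{itemize}
\item a contraction on a ball in $X_1(T)$, using \eqref{st} with the pairs $(\infty,2)$ and $(q(r),r)$;
\item Lemma \ref{lemlwp2} together with $T\le 1$;
\item the Hausdorff--Young bound $\|e^{it\partial_x^2}\psi_0\|_{Y_1(T)}\le T^{1/q(r)}\|\psi_0\|_{\widehat{L}^r}$, which produces the extra $\frac{\alpha-1}{q(r)}$ in \eqref{c3a};
\item a radius $A\sim T^{1/q(r)}\|\psi_0\|_{\widehat{L}^r}$.
\end{itemize}

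There is one bookkeeping point you inherit from \eqref{AA2}. You should take $A=3K\,T^{1/q(r)}\|\psi_0\|_{\widehat{L}^r}$, where $K$ is a fixed constant with $\|\Gamma(0)\|_{X_1(T)}\le K\,T^{1/q(r)}\|\psi_0\|_{\widehat{L}^r}$ whenever $C\le 1$. Do not use $A=\frac{3}{C}T^{1/q(r)}\|\psi_0\|_{\widehat{L}^r}$. With that choice, \eqref{c3a} only yields $T^{\frac{5-\alpha}{4}-\frac{1}{\nu}}A^{\alpha-1}\le 3^{\alpha-1}C^{\frac{5-\alpha}{4}-\frac{1}{\nu}-(\alpha-1)(1-\frac{1}{q(r)})}$. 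This exponent is negative exactly when the first alternative in \eqref{pmax} holds, and in that case shrinking $C$ does not close the self-map estimate.
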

\begin{proof}
     Define $$B(A,T)=\{w\in X_{1}(T):\|w\|_{X_{1}(T)}\leq A\}$$
    with $A>0$ (to be chosen later), $T$ be the minimum of the right-hand sides of the conditions \eqref{c1a},  \eqref{c2a} and \eqref{c3a}(w.l.o.g. we may assume).
    Further, define
    $$\Gamma(w):= i \mu \int_{0}^{t} e^{i(t-\tau)\partial_x^2}|x|^{-b} G(v_{0} + e^{i\tau\partial_x^2}\psi_{0}, w_{0},0)\, d\tau +  i \mu  \int_{0}^{t} e^{i(t-\tau)\partial_x^2}|x|^{-b} G(v_{0}, e^{i\tau\partial_x^2}\psi_{0},0) \, d\tau. $$
    Firstly, we need to show that $\Gamma(w)\in B(A,T)$.\\
   Using \eqref{st} (for $r\in \{\alpha+1,2(\alpha-1),2\}$), \eqref{eg}, Lemma \ref{lemlwp2} under the assumption \eqref{c1a}, the embedding $L^{\infty}_{T} \hookrightarrow L^{q(r)}_{T}$ \footnote{$\|\cdot\|_{L^{q(r)}_{T}} \leq T^{\frac{1}{q(r)}} \|\cdot\|_{L^{\infty}_{T}}.$} and we have 
    \begin{align*}
    \hspace{-0.5cm}\left\| \int_{0}^{t} e^{i(t-\tau)\partial_x^2}|x|^{-b}G(v_{0},e^{i\tau \partial_x^2}\psi_{0},0) \, d\tau \right\|_{X_{1}(T)}
   &\lesssim_{\alpha,b}  \||v_{0}|^{\alpha-1}e^{i\tau \partial_x^2}\psi_{0} \|_{L^{\gamma'_{1}}_{T}L^{\rho'_{1}}(B^{c})}  
   \nonumber\\ 
   +  \|~|x|^{-b}|v_{0}|^{\alpha-1}e^{i\tau \partial_x^2}\psi_{0} \|_{L^{\gamma'_{2}}_{T}L^{\rho'_{2}}(B)} &  + \||e^{i\tau \partial_x^2}\psi_{0}|^{\alpha} \|_{L^{\gamma'_{1}}_{T}L^{\rho'_{1}}(B^{c})} 
   + \|~|x|^{-b}|e^{i\tau \partial_x^2}\psi_{0}|^{\alpha} \|_{L^{\gamma'_{2}}_{T}L^{\rho'_{2}}(B)} 
  \end{align*}
  \begin{align*}
   \lesssim & \left(T^{\frac{5- \alpha}{4} - \frac{1}{\nu}} \right) 
     \left(\|v_{0}\|^{\alpha-1}_{Y(T)} \|e^{i\tau \partial_x^2}\psi_{0}\|_{Y(T)}+
     \|e^{i\tau \partial_x^2}\psi_{0}\|^{\alpha }_{Y(T)}\right)  \\
   \lesssim_{\alpha} 
   & \left( T^{\frac{5 - \alpha}{4} - \frac{1}{\nu} + \frac{1}{q(r)}} \right) 
     \|v_{0}\|^{\alpha-1}_{Y(T)} \|e^{i\tau \partial_x^2}\psi_{0}\|_{L^{\infty}_{T}L^{r}} 
    + \left( T^{\frac{5 - \alpha}{4} - \frac{1}{\nu} + \frac{\alpha}{q(r)}} \right) 
     \|e^{i\tau \partial_x^2}\psi_{0}\|^{\alpha }_{L^{\infty}_{T}L^{r}}\nonumber\\
     \lesssim_{\alpha} 
     & \left( T^{\frac{5-\alpha}{4} - \frac{1}{\nu} + \frac{1}{q(r)}} \right) \| \phi_{0} \|_{L^{2}}^{\alpha-1}  \| \psi_{0}\|_{\widehat{L}^{r}}
      + \left( T^{\frac{5 - \alpha}{4} - \frac{1}{\nu} + \frac{\alpha}{q(r)}} \right) 
     \| \psi_{0}\|_{\widehat{L}^{r}}^{\alpha} \\
      = &T^{\frac{1}{q(r)}}\| \psi_{0}\|_{\widehat{L}^{r}} \left( T^{\frac{5-\alpha}{4} - \frac{1}{\nu}} \right. \| \phi_{0} \|_{L^{2}}^{\alpha-1}
      +\left. T^{\frac{5-\alpha}{4} -\frac{1}{\nu} + \frac{\alpha-1}{q(r)}}\| \psi_{0} \|_{\widehat{L}^{r}}^{\alpha}  \right) \\
     \lesssim_{\alpha,b} 
     &   T^{\frac{1}{q(r)}}\| \psi_{0} \|_{\widehat{L}^{r}}.
\end{align*}
To get the last third inequality, we have used \eqref{gwpl2} and Hausdorff-Young inequality alongwith \eqref{spfls}. The last inequality follows due to our assumptions \eqref{c2a} and \eqref{c3a}.
This suggests the choice of
    \begin{equation}\label{AA2}
    A= \frac{3}{C(\alpha,b)}T^{\frac{1}{q(r)}}\|\psi_{0}\|_{\widehat{L}^{r}}.
    \end{equation}
    where $C=C(\alpha,b)$ is the same constant as in \eqref{c2a} and \eqref{c3a}, chosen such that \begin{equation}\label{gammaw1a}
      \left|\left| \displaystyle\int_{0}^{t} e^{i(t-\tau)\partial_x^2} |x|^{-b}G(v_{0},e^{i\tau \partial_x^2}\psi_{0},0)(\tau) \, d\tau \right|\right|_{Y(T)}\leq \frac{A}{3}
    \end{equation}
    holds.
     Similarly, using \eqref{st} with $(r\in \{\alpha+1, 2(\alpha-1), 2\})$, Lemma \ref{lemlwp2}, \eqref{gwpl2} and Hausdorff-Young inequality alongwith \eqref{spfls}, we have
\begin{align*}
    \hspace{-7cm} \left\| \int_{0}^{t}  e^{i(t-\tau)\partial_x^2} |x|^{-b} G(v_{0} +  e^{i\tau \partial_x^2}\psi_{0}, w_{0},0) \, d\tau \right\|_{X_{1}(T)} 
\end{align*}
\begin{align*}
    & \lesssim_{\alpha,b} T^{\frac{5-\alpha}{4} -\frac{1}{\nu}}   \left( \|v_{0}
    + e^{i\tau \partial_x^2}\psi_{0} \|_{Y(T)}^{\alpha-1} \|w_{0}\|_{Y(T)} + \|w_{0}\|_{Y(T)}^{\alpha} \right) \\
    & \lesssim_{\alpha}  \|w_{0}\|_{Y(T)}\left\{T^{\frac{5-\alpha}{4}-\frac{1}{\nu}}   \left( \left( \|\phi_{0}\|_{L^{2}} + \|\psi_{0}\|_{\widehat{L}^{\alpha+1}} \right)^{\alpha-1} +\|w_{0}\|_{Y(T)}^{\alpha-1} \right)\right\} \\
    & \lesssim_{\alpha,b}\|w_{0}\|_{Y(T)} \left\{\frac{1}{3} +T^{\frac{5-\alpha}{4} -\frac{1}{\nu}+\frac{\alpha-1}{q(r)}}\|\psi_{0}\|^{\alpha-1}_{\widehat{L}^{r}}   \right\}.
\end{align*}
In the last inequality, we have used \eqref{c2a} in the first summand (choosing $C$ in \eqref{c2a} small enough) and substitute the norm of $w_{0}$ in $Y(T)$  by $A$, ($A$ given in \eqref{AA2}) in the second summand. Consider the second summand of the last inequality under the assumption \eqref{c3a} (choosing $C$ small enough in \eqref{c3a}) to get
        \begin{equation}\label{gamma2a}
            \left\| \int_{0}^{t} e^{i(t-\tau)\partial_x^2} |x|^{-b} G(v_{0} + e^{i\tau \partial_x^2}\psi_{0}, w_{0},0)\, d\tau \right\|_{Y(T)} \leq \frac{2A}{3}.
        \end{equation}
        Combining \eqref{gammaw1a} and \eqref{gamma2a}, we can say that $\Gamma(w_{0})$ belongs to $B(A, T).$ Contractivity of $\Gamma$ follows similarly. Thus, by the Banach fixed-point theorem, we get a unique fixed point $w$ to the integral equation \eqref{w01} on the time interval $[0, T].$
\end{proof}  
\begin{Remark}
    Using Proposition \ref{w0exist3} and \eqref{st} (estimating $w_{0}$ in $ L^{\infty}L^{2}$ in the proof of Proposition \ref{w0exist}), we have  \begin{equation*}
\|w_{0}\|_{L^{\infty}_{T}L^{2}}\lesssim_{\alpha,b} T^{\frac{1}{q(r)}}\|\psi\|_{\widehat{L}^{r}}.
    \end{equation*}
\end{Remark}
Taking \eqref{gwpl2}, Proposition \eqref{w0exist3} and \eqref{spfls} into account, the solution $u$ corresponding to $u_{0}$ lies in 
$$\{L^{\infty}_{T} L^2 ~\cap~ L^{q(r)}_{T} L^{r}\}+L^{\infty}_{T} \widehat{L}^{r}\;\left(\subset L^{q(r)}_{T} L^{r}\right).$$

    We continue with the iteration process with $k=0,1,2, \cdots K-1$. In the last iteration, let $\phi_{K}$ and $\psi_{K}$ be the initial value decomposition. As discussed in the proof of Theorem \ref{mr2}, particularly consider \eqref{secondest}, we need 
 \begin{align*}
  \|\phi_{K}\|_{L^{2}} & \leq \cdots \leq  \|\phi_{0}\|_{L^{2}}+\sum_{k=0}^{K-1}\|w_{k}\|_{L^{\infty}_{[kT,(k+1)T]}L^{2}} \nonumber \\
        & \lesssim_{\alpha,b} CN^{\beta_{2}}+T^{\frac{1}{q(r)}}\sum_{k=0}^{K-1}\|e^{ikT\partial_x^2}\psi_{0}\|_{\widehat{L}^{r}}\nonumber\\
        &\lesssim_{T^{*}} CN^{\beta_{2}}+T^{\frac{1}{q(r)}}K\frac{C}{N}
\end{align*} Thus, 
 \begin{align*}
     KT &\gtrsim_{\alpha,b,T^*} N^{1+\beta_{2}}T^{1-\frac{1}{q(r)}} \approx N^{1+\beta_{2} \left(1-\frac{(\alpha-1)\left(1-\frac{1}{q(r)}\right)}{\frac{5-\alpha}{4}-\frac{1}{\nu}}\right)}\nonumber\\
    &= N^{1-\beta_{2} \left(-1+\frac{(\alpha-1)\left(1-\frac{1}{q(r)}\right)}{\frac{5-\alpha}{4}-\frac{1}{\nu}}\right)}.
\end{align*}
Note that $N$ can be taken arbitrarily large. For any $\beta_{2}$ satisfying 
\begin{align}\label{betarange2}
0 < \beta_{2} <
\begin{cases}
 \eta \quad &\text{if}\quad  \alpha-1-\frac{\alpha-1}{q(r)}-\frac{5-\alpha}{4}+\frac{1}{\nu}>0\\
\infty \quad &\text{otherwise} 
\end{cases}
\end{align}
where
$$\eta =\frac{\frac{5-\alpha}{4}-\frac{1}{\nu}}{\alpha-1-\frac{\alpha-1}{q(r)}-\frac{5-\alpha}{4}+\frac{1}{\nu}}~,$$
the exponent of $N$ is positive in \eqref{Npower}, we get $KT>T^*$. This concludes the proof of Theorem \ref{gwp3}.
\begin{Remark}\label{etap2}
    Note that for $$ \beta_{2}=\eta=\frac{1-\frac{\alpha-1}{4}-\frac{1}{\nu}}
{ \alpha-1 -\frac{\alpha-1}{q(r)}-\frac{5-\alpha}{4}+\frac{1}{\nu}},$$ considering \eqref{betap2}, we get
$$p_{max}=\frac{(\alpha-1)(\frac{3}{2}+\frac{1}{r})}
{\frac{\alpha-1}{2}(\frac{3}{2}+\frac{1}{r})+(\frac{5-\alpha}{2}-\frac{2}{\nu})(\frac{1}{r}-\frac{1}{2})}.$$
Thus, we have $p \in (2,p_{max})$ and $p_{max}$ be given as:
     \begin{align*}p_{\max}=
         \begin{cases}
            \frac{(\alpha-1)(\frac{3}{2}+\frac{1}{r})}
{\frac{\alpha-1}{2}(\frac{3}{2}+\frac{1}{r})+(\frac{5-\alpha}{2}-\frac{2}{\nu})(\frac{1}{r}-\frac{1}{2})} &\text{if}\quad \beta = \eta \\
             r &\text{if}\quad \beta=\infty. \\
         \end{cases}
     \end{align*}
      This justifies the choice of \(p\)  in Theorem \ref{gwp3}.
\end{Remark}
\begin{Remark}\label{detailedpmax}
    Thus, when $\alpha \in (1,3)$, we have
\begin{equation*}p_{\max} := 
\begin{cases}
 \frac{3\alpha+5}{2\alpha+\frac{1}{\nu}}   \quad &\text{if} \  (\alpha-1)(\frac{3}{4}+\frac{1}{2(\alpha+1)})-\frac{5-\alpha}{4}+\frac{1}{\nu}>0 \\ 
\alpha+1 \quad  & \text{otherwise}.\end{cases}
\end{equation*}
For $\alpha \in [3,5-2b),$ we denote
 \begin{equation*}p_{\max} := 
 \begin{cases}
\frac{3\alpha-2}{2(\alpha-1)+\frac{2(2-\alpha)}{\alpha-1}(1-\frac{1}{\nu})}  \quad &\text{if} \  (\alpha-1)(\frac{3}{4}+\frac{1}{4(\alpha-1)})-\frac{5-\alpha}{4}+\frac{1}{\nu}>0\\
2(\alpha-1) \quad  & \text{otherwise}.
\end{cases}
\end{equation*}
\end{Remark}
\end{proof}
  {\bf Acknowledgements :} D. Dhingra is sincerely grateful to Prof. Mathew Joseph for arranging her scientific visit to the Indian Statistical Institute, Bangalore Centre, India, and the Indian Statistical Institute, Bangalore Centre, India for providing financial support, to continue her research.

\bibliographystyle{siam}
\bibliography{finls.bib}
\end{document}